\documentclass[11pt]{amsart}

\usepackage[T1]{fontenc}
\usepackage{lmodern}
\usepackage[english]{babel}
\usepackage{csquotes}

\usepackage{enumitem}
\usepackage{etoolbox}
\robustify\cite
\usepackage{xcolor}
\usepackage{xurl}
\usepackage{ragged2e}
\usepackage{float}

\setlist{nosep}
\setlist[enumerate]{label=(\roman*)}

\usepackage{mathtools}
\mathtoolsset{showonlyrefs=true} 
\usepackage{amssymb} 
\usepackage{bm}
\usepackage[
cal=rsfso,
calscaled=1.05,
scr=rsfso,
scrscaled=1.05
]{mathalpha} 

\usepackage{array}
\usepackage{booktabs}

\usepackage[
backend=biber,
style=alphabetic,      
backref=true,          
maxnames=99,      
minnames=99,
maxalphanames=99, 
minalphanames=99, 
isbn=false,       
url=false,        
doi=true,         
eprint=true       
]{biblatex}

\AtBeginBibliography{\setlength{\emergencystretch}{3em}}

\usepackage[
colorlinks=true,
hypertexnames=false,
linkcolor=red,
citecolor=blue,
filecolor=blue,
urlcolor=blue
]{hyperref}

\usepackage[nameinlink,noabbrev]{cleveref}

\makeatletter

\renewcommand\normalsize{%
 \@setfontsize\normalsize{11.7}{14pt plus .3pt minus .3pt}%
 \abovedisplayskip 10\p@ \@plus4\p@ \@minus4\p@
 \abovedisplayshortskip 6\p@ \@plus2\p@
 \belowdisplayshortskip 6\p@ \@plus2\p@
 \belowdisplayskip \abovedisplayskip
}

\renewcommand\small{%
 \@setfontsize\small{9.5}{12\p@ plus .2\p@ minus .2\p@}%
 \abovedisplayskip 8.5\p@ \@plus4\p@ \@minus1\p@
 \belowdisplayskip \abovedisplayskip
 \abovedisplayshortskip \abovedisplayskip
 \belowdisplayshortskip \abovedisplayskip
}

\renewcommand\footnotesize{%
 \@setfontsize\footnotesize{8.5}{9.25\p@ plus .1pt minus .1pt}%
 \abovedisplayskip 6\p@ \@plus4\p@ \@minus1\p@
 \belowdisplayskip \abovedisplayskip
 \abovedisplayshortskip \abovedisplayskip
 \belowdisplayshortskip \abovedisplayskip
}

\ifdefined\pdfpagewidth
\else
\fi

\calclayout
\makeatother

\newtheorem{theorem}{Theorem}[section]
\newtheorem{corollary}[theorem]{Corollary}
\newtheorem{lemma}[theorem]{Lemma}
\newtheorem{proposition}[theorem]{Proposition}

\theoremstyle{definition}
\newtheorem{definition}[theorem]{Definition}
\newtheorem{example}[theorem]{Example}

\theoremstyle{remark}
\newtheorem{remark}[theorem]{Remark}

\newcommand{\exend}{\hfill$\diamond$}
\AtEndEnvironment{example}{\exend}

\newcommand{\Q}{\mathbb{Q}}
\newcommand{\Z}{\mathbb{Z}}
\newcommand{\C}{\mathbb{C}}
\newcommand{\PP}{\mathbb{P}}

\DeclareMathOperator{\id}{id}
\DeclareMathOperator{\rk}{rk}
\DeclareMathOperator{\pr}{pr}
\DeclareMathOperator{\End}{End}
\DeclareMathOperator{\Bl}{Bl}

\DeclareMathOperator{\Hdg}{Hdg}
\DeclareMathOperator{\Exp}{Exp}
\DeclareMathOperator{\spec}{spec}
\DeclareMathOperator{\mult}{mult}

\newcommand{\rH}{\mathrm{H}}

\newcommand{\Hod}{\mathsf{Hod}}
\newcommand{\Eu}{\mathsf{Eu}}
\newcommand{\Grad}{\mathsf{Gr}}
\newcommand{\NS}{\mathsf{NS}}

\newcommand{\cl}[1]{\overline{#1}}
\newcommand{\ms}[1]{\{\!\{#1\}\!\}}

\newcommand{\cE}{\mathcal{E}}
\newcommand{\cH}{\mathcal{H}}
\newcommand{\cL}{\mathcal{L}}
\newcommand{\cM}{\mathcal{M}}
\newcommand{\cN}{\mathcal{N}}
\newcommand{\cO}{\mathcal{O}}
\newcommand{\cR}{\mathcal{R}}
\newcommand{\cV}{\mathcal{V}}

\begin{document}
 
 \title{A quantum connection approach to rationality}
 
 \author[Carazolli]{Luiz Carazolli}
 \address{Instituto de Matemática, Estatística e Computação Científica (IMECC) da Universidade Estadual de Campinas (Unicamp), Cidade Universitária, Campinas - SP, 13083-856, Brazil.}
 \email{luiz.otavio.carazolli@gmail.com}
 
 \author[Cavenaghi]{Leonardo F. Cavenaghi}
 \address{Institute of Mathematics and Informatics, Bulgarian Academy of Sciences}
 \email{leonardofcavenaghi@gmail.com}
 
 \author[Katzarkov]{Ludmil Katzarkov}
 \address{Simons Center for Geometry and Physics, Stony Brook University, Stony Brook, NY 11794, USA. \& International Center for Mathematical Sciences (ICMS), Sofia, Bulgaria. \& Institute of Mathematics and Informatics, Bulgarian Academy of Sciences, Sofia, Bulgaria.}
 \email{lkatzarkov@gmail.com}
 
 \author[Kontsevich]{Maxim Kontsevich}
 \address{Institut des Hautes Études Scientifiques, 35 route de Chartres, F - 91440 Bures-sur-Yvette}
 \email{maxim@ihes.fr}
 
 \author[Martins]{Pedro Antonio Muniz Martins}
 \address{Instituto de Matemática, Estatística e Computação Científica (IMECC) da Universidade Estadual de Campinas (Unicamp), Cidade Universitária, Campinas - SP, 13083-856, Brazil.}
 \email{pedroa.muniz9@gmail.com}
 
 \begin{abstract}
  We extract rank and regularized exponents from cyclic generic factors
  of the quantum connection and show that these data persist under
  the birational comparisons considered here. Applications include showing non-rationality of
  cubic threefolds, ordinary Gushel--Mukai threefolds, very general
  quartic fourfolds, and products of the threefolds with $\PP^1$.
 \end{abstract}
 
 \maketitle
 
 \section{Introduction}
 \label{sec:introduction}
 
 Let $Y\subset\PP^n$ be a smooth complex Fano complete intersection of
 multidegree $(d_1,\ldots,d_r)$ and dimension $m=n-r\geq3$,
 where $2\leq d_1\leq\cdots\leq d_r$.
 By the Lefschetz hyperplane theorem, $\operatorname{Pic}(Y)=\Z\cdot[\mathcal O_Y(1)]$, and adjunction gives
 \[
 \omega_Y^{-1}\cong\mathcal O_Y(r_Y),
 \qquad
 r_Y=n+1-\sum_{i=1}^r d_i.
 \]
 The integer $r_Y>0$ is the Fano index.
 
 Every smooth complete intersection over $\C$ is smoothly attainable
 in the sense of \cite[Def.~1.5 and Lem.~6.11]
 {kuznetsov2021serrefunctorsdimensionsresidual}. The bounded derived category of coherent sheaves of $Y$ admits the
 semiorthogonal decomposition
 \[
 \mathrm D^{\mathrm b}(Y)
 =
 \left\langle
 \mathcal A_Y,\mathcal O_Y,\mathcal O_Y(1),
 \ldots,\mathcal O_Y(r_Y-1)
 \right\rangle,
 \]
 and 
 \cite[Thm.~1.7]{kuznetsov2021serrefunctorsdimensionsresidual} identifies the upper Serre dimension of the residual category $\mathcal A_Y$ as
 \begin{equation}\label{eq:intro-serre-dimension}
  \sigma(Y):=m-\frac{2r_Y}{d_r}.
 \end{equation}
 This paper is in part motivated by the following question:
 \begin{quote}
  \textbf{Question.}
  If
  \[
  \sigma(Y)>m-2,
  \qquad\text{equivalently}\qquad
  r_Y<d_r,
  \]
  is a very general smooth Fano complete intersection of the specified
  multidegree irrational? In which cases does irrationality hold for
  every smooth member?
 \end{quote}
 
 This question is motivated by the expected relation between Hodge atoms \cite{KKPY} and admissible components of derived categories; see \cite[\S2.7]{cavenaghi2026atomsmeetsymbols}. The non-rationality criterion in the theory \cite[Prop.~5.30]{KKPY} states that every atom of a rational smooth projective $m$-fold, with $m\geq2$, occurs on a smooth projective variety of dimension at most $m-2$. Our motivating question is an instance of the following: does there exist a numerical invariant of individual atoms that takes the value $\sigma(Y)$ on an atom of $Y$ and is bounded above by the dimension of every variety on which that atom occurs?
 
 \
 
 For a smooth complex projective variety $X$, genus-zero Gromov--Witten invariants define a family of products $\star$ on its rational Betti cohomology. Over a complete non-Archimedean field $\Bbbk$, one can build a parameter space (a $\Bbbk$-super analytic manifold, in the sense of Berkovich) $B_X$, such that under the tangent-space identification with cohomology, each fiber acquires the structure of a Frobenius superalgebra
 \[
 T_bB_X\cong\left(\rH^\ast(X,\Q)\otimes_\Q\Bbbk,\star_b\right).
 \]
 Gromov--Witten invariants are deformation invariants. The theory of Hodge Atoms \cite{KKPY} relates them to birational geometry by understanding how quantum multiplication interacts with the Hodge structure. More precisely, in Section~\ref{sec:hodge} we recall that there exists an algebraic reductive group $\mathsf{Hod}(X)$ obtained as the image of the universal Hodge group $\mathsf{Hod}$ on $\rH^*(X,\Q)$ whose analytic group germ  $\mathsf{Hod}(X)_\Bbbk$ at the identity acts on $B_X$. We denote its fixed locus by $B_X^{\Hod}$. Let $U_X^{\mathrm{gen}}\subset B_X^{\Hod}$ be the locus where the operator $\Eu_b\star_b(-)$ has the maximal number of distinct eigenvalues, where $\Eu=c_1(T_X){+}\ldots$ is a distinguished even vector field named \emph{Euler vector field} (see Equation~\eqref{eq:euler-grading}). Pick a $\Bbbk$-rational point $b\in U_X^{\mathrm{gen}}(\Bbbk)$ and write $K_b^{\mathrm F}=-\Eu_b\star_b$. Set
 \[
 H_{\lambda,b}:=\ker(K_b^{\mathrm F}-\lambda\id)^N,
 \qquad N:=\dim_\Q\rH^\ast(X,\Q).
 \]
 Let $V_j$ denote the isotypic components of $\rH^\ast(X,\Q)\otimes_\Q\Bbbk$ as a representation of
 $\mathsf{Hod}(X)_{\Bbbk}$. Then the action of $K_b^{\mathrm F}$ on the tangent bundle commutes with this action, hence preserves every $V_j$, and
 \begin{equation}\label{eq:intro-compatible-decompositions}
  T_bB_X
  =
  \bigoplus_{\lambda\in\spec(K_b^{\mathrm F})}H_{\lambda,b}
  =
  \bigoplus_{\lambda\in\spec(K_b^{\mathrm F})}
  \bigoplus_j\bigl(V_j\cap H_{\lambda,b}\bigr).
 \end{equation}
 Thus, each generalized $\lambda$-eigenspace for $K_b^{\mathrm F}$ can be thought of as a representation of $\Hod(X)$ whose isomorphism class (termed a \emph{coarse Hodge atom} in \cite{CKK-lectures,alexeev2026atomicdecompositioncompleteintersection})  is independent of the point along a connected component of the reduced generic spectral cover; see \cite[Prop.~5.23]{KKPY}. The theory of Hodge atoms is built using the notion of \emph{F-bundles}, which we recall in Section~\ref{sec:F-bundles}. 

 Let $\mathbb D$ be the germ of an analytic disk at $u=0$. Then, there is a supervector bundle $\mathcal H$ over $B_X\times\mathbb D$ equipped with a meromorphic flat connection $\nabla$ with poles at $u=0$ and such that
 \[K^{\mathrm F}_b=\nabla_{u^2\partial_u}|_{\mathcal H_{b,0}},~\nabla_{u\xi}|_{\mathcal H_{b,0}}\]
 are regular operators for any local vector field $\xi$. 
 
 In this paper, we consider the action of $K^{\mathrm{F}}_b$ on the Hodge-invariant part of the cohomology of $X$. Namely, set
 \[
 H_{\Hdg}(X):=\bigoplus_p
 \bigl(\rH^{2p}(X,\Q)\cap H^{p,p}(X)\bigr),
 \qquad
 H^{\Hdg}_{\lambda,b}
 :=H_{\lambda,b}\cap
 \bigl(H_{\Hdg}(X)\otimes_\Q\Bbbk\bigr).
 \]
 As explained in Section~\ref{sec:hodge}, the connection restricts to a purely even maximal F-bundle
 $\cH_{\Hdg}$ over $B_X^{\Hod}$. Its leading endomorphism is
 $K_b^{\mathrm F}|_{H_{\Hdg}(X)\otimes\Bbbk}$.
 Theorem~\ref{thm:hyzz} gives, on suitable germs,
 \begin{equation}\label{eq:intro-local-f-bundles}
  (B_X^{\Hod},b)\cong
  \prod_{\lambda\in\spec(K_b^{\mathrm F})}(B_\lambda,b_\lambda),
  \qquad
  (\cH_{\Hdg},\nabla)\cong
  \boxplus_{\lambda\in\spec(K_b^{\mathrm F})}
  (\cH^{\Hdg}_\lambda,\nabla^{\Hdg}_\lambda),
 \end{equation}
 where
 $(\cH^{\Hdg}_\lambda)_{b_\lambda,0}
 \cong H^{\Hdg}_{\lambda,b}$.
 Here, $\boxplus$ denotes the external sum, which we recall in
 Section~\ref{sec:quantum}. The second isomorphism in
 \eqref{eq:intro-local-f-bundles} is an isomorphism of F-bundles,
 including their connections in the disk direction. At a fixed
 point $b$, restrict to the disk with coordinate $u$, complete at
 $u=0$, and invert $u$. On the Hodge-invariant fiber, this gives the formal
 meromorphic disk connection
 \[
 \nabla_{\partial_u,b}
 =\partial_u+\frac{K_b^{\mathrm F}}{u^2}+\frac{\Grad}{u},
 \]
 with $K_b^{\mathrm F}$ and the \emph{grading operator} $\Grad$ (explicitly defined in Equation~\eqref{eq:euler-grading}) restricted to $H_{\Hdg}(X)\otimes\Bbbk$.
 The F-bundle decomposition then gives differential summands $\cM_{\lambda,b}$ of this connection, named \emph{generic Hodge factors}, indexed by the eigenvalues
 $\lambda$ of $K_b^{\mathrm F}$.
 
 We say that $\cM_{\lambda,b}$  is \emph{cyclic} if the restriction of $K_b^{\mathrm F}$ to the corresponding generalized eigenspace has a single Jordan block. We propose birational invariants when the differential modules $\cM_{\lambda,b}$ are of \emph{unramified exponential type}  (Definition~\ref{def:exponents}):
 \[
 \cM_{\lambda,b}
 \cong\cE^{-\lambda/u}\otimes\cR_{\lambda,b},
 \]
 where $\cR_{\lambda,b}$ is \emph{regular singular}. Our invariants are the ranks of these modules and the multisets $F:=\Exp(\cR_{\lambda,b})$ of negative residue eigenvalues modulo $\Z$, counted with algebraic multiplicity. We remark that these are not invariants of Hodge atoms, however. Here we prove (Theorem~\ref{thm:criterion}):  let $X$ be a connected smooth complex projective variety of dimension $2\leq n\leq4$. Suppose that a generic Hodge factor  has a cyclic representative of unramified exponential type
  with rational residues. Let $m$ be its rank and $F$ its regularized exponent multiset. If $X$ is rational, then $(m,F)$ are as one of\footnote{Throughout, $\ms{a_1,\ldots,a_k}$ denotes a multiset, and $\cl a:=a+\Z$ for $a\in\Q$.}:
 \begin{equation}\label{eq:intro-allowed-types}
  \begin{array}{c|c|c}
   \text{type} & \text{rank~$m$} & \text{exponent multiset~$F$}\\
   \hline
   \text{point}   & 1 & \ms{\cl0}\\
   \text{curve}   & 2 & \ms{\cl{1/2},\cl{1/2}}\\
   \text{surface} & 3 & \ms{\cl0,\cl0,\cl0}
  \end{array}
 \end{equation}

To prove Theorem~\ref{thm:criterion}, we check that if a connected component of the generic spectral cover has one $\Bbbk$-rational cyclic point at which the selected factor is of unramified exponential type with rational exponents, then every $\Bbbk$-rational cyclic point on that component has the same properties and exponent multiset (Proposition~\ref{prop:cyclic-transport}). Moreover, we show that a selected factor satisfying these hypotheses can also be followed through the blowup and projective-bundle comparisons while preserving its rank, cyclicity, and exponents (Lemma~\ref{lem:separation}). These results depend on the formal-persistence statement given in \cite[Rem.~3.14]{KKPY}, proven in Theorem~\ref{cl:formal-persistence}.

    \
 
Section~\ref{sec:examples} gives the following applications.
 
 For cubic threefolds, we recover the irrationality theorem of Clemens--Griffiths \cite{CG}. We remark that Cai also obtains an obstruction from the formal monodromy of the
 quantum connection \cite{cai2026cubicthreefoldsymplecticallyirrational}. Moreover, we prove that $Y\times\PP^1$ is irrational for every smooth cubic threefold $Y$, a conclusion also established by Gong--Xu \cite[Thm.~1.1]{gong2026irrationalitystabilizationcomplexsmooth}. For Gushel--Mukai threefolds, Beauville proved irrationality of a
 general member \cite[Thm.~5.6(ii)]{Beauville1977}, and
 Debarre--Mongardi constructed explicit irrational examples \cite{DebarreMongardi}. Our criterion proves the irrationality of every smooth ordinary Gushel--Mukai threefold and of its product with $\PP^1$. For very general quartic fourfolds, Totaro proved stable irrationality \cite{Totaro2015}. Our criterion recovers the irrationality conclusion.\footnote{During the preparation of this manuscript, the authors learned that Andrew Harder, Jiaji Cai, and Sukjoo Lee were independently working out this example via similar methods.} The authors remark that a catalog computing many small quantum multiplication matrices was built in \url{https://leonardofcavenaghi.github.io/atomicdecompositions/}. A software package is also presented for computing these matrices and other genus-0 Gromov--Witten invariants. The authors remark that this website (and software) will be updated to compute the exponent multiset for many smooth projective 3-folds, yielding other applications. For instance, one anticipates that the smooth complete intersection of a quadric and a cubic in $\mathbb P^5$ can be shown to be irrational via Theorem~\ref{thm:criterion} and this mentioned computation, offering an alternative to \cite{Beauville1977, Beauville2012, Pukhlikov1989Maximal}.
 
 \
 
Returning to our starting question, we expect that the invariant $\sigma(Y)$ is related to the exponents $F$ of a given Hodge factor. More precisely, in the case of smooth cubic threefolds (Example~\ref{ex:cubic3}), we check for the eigenvalue $0$, that the representatives $\bmod \mathbb Z$ are $s=-\dfrac{5}{6},~-\dfrac{1}{6}$. The number $-\frac{5}{6}$ matches $-\dfrac{1}{2}\times $ the upper Serre dimension for the Kuznetsev component of the SOD decomposition in \cite{kuznetsov2021serrefunctorsdimensionsresidual}. This holds likewise for smooth quartic fourfolds here computed (Example~\ref{ex:quartic4}). We check that the same also holds for Gushel--Mukai threefolds (Example~\ref{ex:GM3}) for the eigenvalue $6q$, where $q$ is a Novikov variable. The corresponding upper Serre dimension is computed in~\cite{perry2022stability}.

The relation  between conformal field theory, asymptotics of quantum
 differential equations, and Steenbrink spectra, initiated by  Cecotti--Vafa \cite{CECOTTI1991359} and  Katzarkov--Lee--Svoboda--Petkov \cite{KLSP}, remains a possible  source of stronger obstructions to rationality \cite{CKK-lectures}.

 \medskip
 
 \subsection*{Acknowledgements}
Authors are in debt with D.~Auroux, A.~Harder, and T.~Pantev for useful discussions and comments throughout the development of this work.
 
 L.~Carazolli is supported by the Coordenação de Aperfeiçoamento de Pessoal de Nível Superior – Brasil (CAPES) – Finance Code 001.
 
 L.~F.~Cavenaghi and L.~Katzarkov are supported by the Simons Foundation, grant SFI-MPS-T-Institutes-00007697, and the Ministry of Education and Science of the Republic of Bulgaria, grant DO1-239/10.12.2024. 
 
 L.~Katzarkov is supported by the Simons Investigators Award (no.003136), NSF FRG grant DMS-2245099, and NSF FRG grant DMS-2255039.
 
 P.~A.~M.~Martins is supported by FAPESP grants no. 2024/07684-7, 2025/22312-1. 
 
 In addition, Ludmil Katzarkov and Maxim Kontsevich are grateful to Jeffrey Fuqua for his continuous financial support of mathematical research throughout the years.
 
 \medskip
 
 \section{Basics on F-bundles}
 \label{sec:quantum}
 In this section, we follow \cite{HYZZ, KKPY} to recall the basics of F-bundles and the necessary structural results used throughout. Let $t$ denote a formal auxiliary variable. We work over the non-Archimedean field $\Bbbk$, which is the completion of the Puiseux series field:
 \[
 \Bbbk_{\mathrm{Puis}}:=\bigcup_{j\geq1}\overline{\Q}((t^{1/j})),
 \qquad \Bbbk:=\widehat{\Bbbk_{\mathrm{Puis}}}.
 \]
 The valuation $v(f)$ on $\Bbbk_{\mathrm{Puis}}$ is the least exponent of a nonzero series, with $v(0)=+\infty$. We define the absolute value as $|f|=e^{-v(f)}$, which extends to the completion. The field $\Bbbk$ has characteristic zero and is complete and algebraically closed. Its nontrivial valuation restricts to the trivial valuation on $\overline\Q$. All analytic spaces considered in this paper are $\Bbbk$-analytic spaces in the sense of Berkovich \cite{Berkovich,jonsson_berkovich_2016}. We sometimes also use $\Bbbk$-super analytic spaces: locally, the structure sheaf is the tensor product of an ordinary analytic structure sheaf with a finite exterior algebra in odd generators. A rational point means a $\Bbbk$-rational point. We write $\mathbb D$ for the germ at $u=0$ of the analytic closed unit disk with coordinate $u$.

 \subsection{F-bundles}
 \label{sec:F-bundles}
 
 Let $B$ be a smooth $\Bbbk$-analytic space, or a smooth $\Bbbk$-analytic superspace. An \emph{F-bundle} $(\cH,\nabla)/B$ is a vector bundle (or a super vector bundle) on
 $B\times\mathbb D$, equipped with a flat meromorphic connection having poles only along $u=0$, such that $\nabla_{u^2\partial_u}$ and $\nabla_{u\xi}$ are regular for every analytic vector field $\xi$ on $B$. In the super setting, the connection is even: for a homogeneous vector field $\xi$, the operator $\nabla_\xi$ has parity $|\xi|$. Flatness is expressed using supercommutators.
 Put $\cH_0:=\cH|_{u=0}$. Reduction at $u=0$ gives
 \begin{equation}\label{eq:F-residual-action}
  \mu\colon T_B\longrightarrow\End_{\cO_B}(\cH_0),
  \qquad \mu(\xi):=\nabla_{u\xi}|_{u=0},
  \qquad K^{\mathrm F}:=\nabla_{u^2\partial_u}|_{u=0}.
 \end{equation}
 These are $\cO_B$-linear: the derivative terms in the Leibniz rule
 vanish modulo $u$. We call $K^{\mathrm F}$ the \emph{residual
  endomorphism} and write $K_b^{\mathrm F}$ for its value at $b$.
 Flatness, reduced modulo $u$, gives
 \begin{equation}\label{eq:F-commuting-action}
  [\mu(\xi),\mu(\eta)]_{\mathrm s}=0,
  \qquad [K^{\mathrm F},\mu(\xi)]=0,
 \end{equation}
 where $[G,H]_{\mathrm s}=GH-(-1)^{|G||H|}HG$ for homogeneous
 endomorphisms.
 
 The F-bundle is \emph{maximal} at $b\in B(\Bbbk)$
 \cite[Def.~2.6]{HYZZ} if an even vector $\zeta\in\cH_{b,0}$ makes
 \[
 T_bB\longrightarrow\cH_{b,0},
 \qquad
 \xi\longmapsto\mu_b(\xi)\zeta,
 \]
 an isomorphism. Extending $\zeta$ locally and shrinking $B$ makes this evaluation map an isomorphism of bundles.
 We explain why maximality induces a canonical multiplication on
 $T_B$ \cite[Lem.~3.9 and Def.~3.11]{KKPY}.
 If a homogeneous endomorphism $A$ supercommutes with every
 $\mu(\xi)$, choose the unique vector field $\eta$ with
 $A\zeta=\mu(\eta)\zeta$. Since $\zeta$ is even, $|\eta|=|A|$, and
 \[
 A\mu(\xi)\zeta
 =(-1)^{|A||\xi|}\mu(\xi)A\zeta
 =\mu(\eta)\mu(\xi)\zeta.
 \]
 The vectors $\mu(\xi)\zeta$ span $\cH_0$, so $A=\mu(\eta)$.
 Thus, the image of $\mu$ equals its supercentralizer. In particular,
 it contains the identity, all products $\mu(\xi)\mu(\eta)$, and
 $K^{\mathrm F}$. The formulas
 \begin{equation}\label{eq:F-tangent-product}
  \mu(\xi\circ\eta)=\mu(\xi)\mu(\eta),
  \qquad \mu(e)=\id,
  \qquad \mu(\Eu^{\mathrm F})=K^{\mathrm F}
 \end{equation}
 define an associative, supercommutative tangent product,
 its even unit $e$, and an even vector field $\Eu^{\mathrm F}$.
 They are analytic and independent of $\zeta$, since their defining
 identities use only the injective map $\mu$. The evaluation map identifies $\cH_0$ with the
 regular representation of this tangent algebra.
 We call $\Eu^{\mathrm F}$ the \emph{F-bundle Euler field}
 \cite[Def.~3.12]{KKPY}. In particular, with
 $(\Eu^{\mathrm F})^{\circ0}:=e$, every polynomial $p\in\Bbbk[z]$ satisfies
 \begin{equation}\label{eq:F-Euler-powers}
  \mu\bigl(p_\circ(\Eu^{\mathrm F})\bigr)=p(K^{\mathrm F}).
 \end{equation}
 This equation will be largely used when proving the results in Appendix~\ref{app:formal-persistence}.
 
 Given F-bundles $(\cH_i,\nabla_i)/B_i$, their
 \emph{external sum} \cite[Def.~2.11]{HYZZ}, \cite[Def.~3.3]{KKPY} is the
 F-bundle over the product $\prod_iB_i$, where $\pr_i\colon(\prod_jB_j)\times\mathbb D\to B_i\times\mathbb D$ denotes the projection,
 \[
 \boxplus_i(\cH_i,\nabla_i)
 :=\Bigl(\bigoplus_i\pr_i^*\cH_i,\ \bigoplus_i\pr_i^*\nabla_i\Bigr).
 \]
 
 We require the following decomposition theorem:
 
 \begin{theorem}[{\cite[Thm.~1.2, Thm.~3.42]{HYZZ}}]\label{thm:hyzz}
  Let $B$ be a smooth, purely even $\Bbbk$-analytic space, let $(\cH,\nabla)/B$ be maximal at $b\in B(\Bbbk)$, and let
  $\cH_{b,0}=\bigoplus_iH_i$ be a $K_b^{\mathrm F}$-stable decomposition with
  pairwise disjoint spectra. Then, on an admissible neighborhood
  $U\cong\prod_iU_i$ of $b$, there are maximal F-bundles
  $(\cH_i,\nabla_i)/U_i$ with $(\cH,\nabla)|_U\cong\boxplus_i(\cH_i,\nabla_i)$,
  inducing the prescribed decomposition of $\cH_{b,0}$.
 \end{theorem}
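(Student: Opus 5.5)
The plan is to follow the strategy of \cite{HYZZ}: first split the fiber at $u=0$ over the whole neighborhood, then lift the splitting to all orders in $u$, and finally identify the base with a product. The argument runs in four steps.

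\textbf{Step 1: splitting at $u=0$.} By maximality, $\cH_0$ is the regular representation of the tangent algebra $(T_B,\circ)$, with $K^{\mathrm F}=\mu(\Eu^{\mathrm F})$. Because the spectra of $K_b^{\mathrm F}$ on the $H_i$ are pairwise disjoint, I choose disjoint closed discs $D_i\subset\Bbbk$ containing the corresponding eigenvalues. Shrinking $B$, the spectrum of $K^{\mathrm F}$ stays inside $\bigcup_i D_i$. The spectral idempotents $\pi_i$ are then analytic. Non-Archimedean holomorphic functional calculus applies here: on the affinoid $\bigcup D_i$, the indicator function of $D_i$ is analytic, so one can also use polynomial approximation. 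By \eqref{eq:F-Euler-powers}, each $\pi_i$ equals $\mu(e_i)$ for orthogonal idempotent vector fields $e_i$ with $\sum e_i=e$. Hence $T_B=\bigoplus_i T_i$, with $T_i=e_i\circ T_B$, and $\cH_0=\bigoplus_i\pi_i\cH_0$.

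\textbf{Step 2: lifting in $u$.} Next I extend $\pi_i$ to $\nabla$-compatible projectors on $\cH$ over $B\times\mathbb D$. Write the connection in $u$ as $u^2\nabla_{\partial_u}=K^{\mathrm F}+uA_1+\dots$. I solve recursively for a gauge transformation $g=1+ug_1+\dots$ that makes the connection block-diagonal. At each order the equation is $[K^{\mathrm F},g_k]=(\text{known off-diagonal terms})$. This is solvable on off-diagonal blocks because $\operatorname{ad}K^{\mathrm F}$ is invertible between blocks with disjoint spectra, which is the standard Wasow/Levelt–Turrittin splitting. Flatness together with the same invertibility then forces the $\nabla_{u\xi}$ to be block-diagonal as well. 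This is because $[K^{\mathrm F},\mu(\xi)]=0$ holds order by order, and the off-diagonal parts of the compatibility equations again lie in the image of an invertible $\operatorname{ad}$.

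\textbf{Step 3: convergence.} The main obstacle is showing that the formal gauge converges, that is, defines an analytic bundle on $B\times\mathbb D$ rather than only a formal one. I would first bound $\|(\operatorname{ad}K^{\mathrm F})^{-1}\|$ on off-diagonal blocks uniformly on $B$ using the separation of the discs $D_i$. I would then run a majorant (Cauchy–Kovalevskaya type) estimate. The expected difficulty is that the splitting of an irregular connection is generally only formal in $u$. To get around this, I would use the variation in $B$ together with flatness, following \cite[Thm.~3.42]{HYZZ}. The $u$-direction splitting is determined by the $\xi$-directions via $\nabla_{u^2\partial_u}$ and the Euler field, which reduces the problem to a contraction over the non-Archimedean base.

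\textbf{Step 4: product structure of the base.} Finally, the distributions $T_i$ commute and are involutive, since $e_i$ are idempotents of a supercommutative algebra whose flatness is inherited. By the analytic Frobenius theorem, a neighborhood $U$ then splits as $\prod U_i$. Flatness gives $\nabla_{u\xi}(\cH_j)=0$ for $\xi\in T_i$ with $i\neq j$ modulo the gauge, so $\cH_i$ descends to $U_i$ with maximality inherited from $\zeta=\sum\pi_i\zeta$. This yields $(\cH,\nabla)|_U\cong\boxplus_i(\cH_i,\nabla_i)$, inducing the prescribed decomposition of $\cH_{b,0}$.
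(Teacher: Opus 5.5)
The paper does not prove this statement. It is quoted from \cite[Thm.~1.2, Thm.~3.42]{HYZZ}, so your proposal has to stand on its own.

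Steps 1, 2 and 4 are the right outline. You take spectral idempotents $\pi_i=\mu(e_i)$ of $K^{\mathrm F}=\mu(\Eu^{\mathrm F})$. You then block-diagonalize order by order by inverting $\operatorname{ad}K^{\mathrm F}$ between blocks; block-diagonality of $\nabla_{u\xi}$ follows from the off-diagonal part of $[\nabla_{u^2\partial_u},\nabla_{u\xi}]=u\nabla_{u\xi}$. Finally you split the base in the Hertling--Manin way.

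The genuine gap is Step 3. It is the only place where it matters that $\mathbb D$ is an analytic germ and not a formal disk, and there you give no argument. You begin the right majorant estimate, then drop it because irregular splittings are ``generally only formal.'' In its place you propose reducing the $u$-direction to base directions through the Euler field. No such reduction is available for a general F-bundle: $\Eu^{\mathrm F}$ is tied to $\nabla_{u^2\partial_u}$ only at $u=0$, via $\mu(\Eu^{\mathrm F})=K^{\mathrm F}$. Appealing to \cite[Thm.~3.42]{HYZZ} at that point is circular. Moreover, the conclusion is false over $\C$: the quantum connection of $\PP^1$ has a nontrivial Stokes matrix, so its formal splitting diverges. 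A valid proof must therefore use a feature of $\Bbbk$ that fails over $\C$, and your sketch never identifies one.

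That feature is that integers have absolute value at most $1$ in $\Bbbk$. With it, the estimate you abandoned closes the argument. In a frame adapted to $\bigoplus_i\pi_i\cH_0$, write $\nabla_{u^2\partial_u}=u^2\partial_u+\sum_{j\geq0}A_ju^j$. Seek $g=1+\sum_{k\geq1}g_ku^k$ with off-diagonal $g_k$, and block-diagonal $D=\sum_kD_ku^k$, such that $u^2g'+Ag=gD$. At order $u^k$, $D_k$ is the block-diagonal part of $A_k+\sum_{j=1}^{k-1}A_jg_{k-j}$, and
\[
[A_0,g_k]=\Bigl(\sum_{j=1}^{k-1}\bigl(g_{k-j}D_j-A_jg_{k-j}\bigr)-A_k\Bigr)^{\mathrm{od}}-(k-1)\,g_{k-1}.
\]
The integer $k-1$ appears only as a multiplier, and $|k-1|\leq1$ in $\Bbbk$; over $\C$ it is exactly this factor that produces factorial growth.

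Now work on an affinoid neighborhood of $b$. Let $c\geq1$ bound the inverse of $\operatorname{ad}A_0$ on off-diagonal blocks; this is possible because the spectra stay in disjoint discs. Let $\|A_j\|\leq R^j$ for $j\geq1$ in the max-entry sup norm, which is submultiplicative by the ultrametric inequality. Induction gives $\|D_k\|\leq L^k/c$ and $\|g_k\|\leq L^k$ with $L=c\max(R,1)$. So $g$ converges for $|u|<L^{-1}$, uniformly on the neighborhood. This is the same mechanism as ``$|n+1|=1$'' in the paper's appendix.

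Two smaller fixes are also needed in Step 4. First, for $\xi\in T_i$ and $j\neq i$ you only get $\nabla_{u\xi}\cH_j\subseteq u\cH_j$, not $=0$. Descending $\cH_j$ therefore requires trivializing a pole-free flat connection along the $U_i$-directions, which converges by the same $|n+1|=1$ estimate. Second, involutivity of each $\bigoplus_{i\in I}T_i$ comes from restricting $[\nabla_{u\xi},\nabla_{u\eta}]=u\nabla_{u[\xi,\eta]}$ to the blocks $\cH_j$ with $j\notin I$, not from supercommutativity alone.
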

 
 The main example of a maximal F-bundle of interest in this paper is the \emph{A-model F-bundle}, which we describe next.
 
 \subsection{A-model F-bundle}
 \label{sec:A-model}
 Let $X$ be a connected smooth complex projective variety of complex dimension $n$. We write $\rH^*(X,\Q)$ for its singular cohomology, graded by cohomological degree, and set $\rH^*(X, F):=\rH^*(X,\Q)\otimes_\Q F$ for any field extension $F/\Q$. For a homogeneous class $\gamma$, $\deg\gamma$ denotes its cohomological degree.
 We write $\NS(X):=\operatorname{Pic}(X)/\operatorname{Pic}^0(X)$ for its N\'eron--Severi group with rank
 $\rho:=\operatorname{rank}\NS(X)$. Choose a homogeneous basis $\{T_i\}$ of $\rH^*(X,\Q)$ such that:
 \begin{itemize}
  \item[(a)] $T_0=\mathbf1$;
  \item[(b)] $T_1,\ldots, T_\rho$ are cohomology classes of an
  integral basis of $\NS(X)/\mathrm{torsion}$;
  \item[(c)] we extend $T_0,\ldots,T_\rho$ to a homogeneous basis of $H_{\Hdg}(X)$ and then choose a homogeneous basis of a $\mathsf{Hod}(X)$-stable complement in $\rH^{\mathrm{ev}}(X,\Q)$, as explained in Section~\ref{sec:hodge};
  \item[(d)] we choose the odd classes freely.
 \end{itemize}
 The coordinates inherit the parity of their corresponding classes, and we use the Koszul sign rule throughout. We write the divisor coordinates multiplicatively as $q_1,\ldots,q_\rho$ and the remaining coordinates as $y_i$. For an effective curve class $\beta$, we set 
 \[
 q^\beta:=\prod_{m=1}^{\rho}q_m^{\int_\beta T_m}.
 \] 
 The \emph{torus domain} is defined as
 \[
 U_X:=\left\{q\in(\mathbb G_{m,\Bbbk}^{\rho})^{\mathrm{an}}:
 -\sum_m\log|q_m|\,T_m\text{ is ample in }\NS(X)\otimes\mathbb R\right\}.
 \]
 The \emph{A-model base} $B_X$ of \cite[\S3.5.2]{KKPY} is the product of $U_X$, the
 analytic affine line in the $y_0$-direction, the open unit polydisk in the
 remaining even coordinates, and the purely odd superspace of the odd
 coordinates. Its tangent sheaf is trivialized by
 $q_m\partial_{q_m}\mapsto T_m$, $\partial_{y_i}\mapsto T_i$, and by
 \cite[Lem.~3.29]{KKPY} the \emph{big quantum product}
 \begin{equation}\label{eq:gw-product}
  g(\gamma_1\star_{\tau}\gamma_2,\gamma_3)
  =\sum_{\beta}\sum_{k\geq0}\frac{q^\beta}{k!}
  \langle\gamma_1,\gamma_2,\gamma_3,\tau,\dots,\tau\rangle_{0,k+3,\beta},
  \qquad
  g(\gamma_1,\gamma_2)=\int_X\gamma_1\cup\gamma_2,
 \end{equation}
 is analytic on $B_X$. Here $\beta$ ranges over effective curve classes, including zero, and $g$ is the Poincar\'e pairing. The insertion $\tau$ is
 \[
 \tau=\sum_{i\notin\{1,\ldots,\rho\}}y_iT_i.
 \]
 We use the primary Gromov--Witten invariants
 \[
 \langle\gamma_1,\ldots,\gamma_r\rangle_{0,r,\beta}
 :=\int_{[\overline{\mathcal M}_{0,r}(X,\beta)]^{\mathrm{vir}}}
 \prod_{j=1}^r\operatorname{ev}_j^*\gamma_j,
 \]
 where $\overline{\mathcal M}_{0,r}(X,\beta)$ is the moduli stack of
 genus-zero stable maps and $\operatorname{ev}_j$ is evaluation at the
 $j$-th marked point.
 
 The \emph{A-model F-bundle} has an underlying trivial super vector bundle
 $\cH:=\rH^*(X,\Q)\otimes_\Q\cO_{B_X\times\mathbb D}$ and
 \emph{quantum connection}
 \begin{equation}\label{eq:quantum-connection}
  \begin{gathered}
   \nabla_{\partial_u}=\partial_u-\frac{1}{u^2}(\Eu\star-)+\frac1u\Grad,\\
   \nabla_{q_m\partial_{q_m}}=q_m\partial_{q_m}+\frac1u(T_m\star-),
   \qquad
   \nabla_{\partial_{y_i}}=\partial_{y_i}+\frac1u(T_i\star-),
  \end{gathered}
 \end{equation}
 where
 \begin{equation}\label{eq:euler-grading}
  \Grad|_{\rH^a(X)}=\frac{a-n}{2}\,\id,
  \qquad
  \Eu=c_1(TX)+\sum_{i\notin\{1,\dots,\rho\}}
  \Bigl(1-\frac{\deg T_i}{2}\Bigr)y_iT_i.
 \end{equation}
 Write $G:=\Grad$. The string equation (or unit axiom) gives
 $\partial_{y_0}(\star)=0$ and $\partial_{y_0}\Eu=\mathbf1$.
 Under the tangent trivialization above, denote by $T_\xi$ the cohomology
 section corresponding to a vector field $\xi$. Then
 $\mu(\xi)=T_\xi\star(-)$ and $\mu(\xi)\mathbf1=T_\xi$, proving
 maximality at every rational point. Moreover,
 $T_{\xi\circ\eta}=T_\xi\star T_\eta$ and $T_e=\mathbf1$:
 the product induced in \eqref{eq:F-tangent-product} is the
 quantum product under this trivialization. We have
 \begin{equation}\label{eq:sign}
  K^{\mathrm F}=-\Eu\star(-),
  \qquad \Eu^{\mathrm F}=-\Eu,
  \qquad
  \nabla_{\partial_u}=\partial_u+\frac{K^{\mathrm F}}{u^2}+\frac Gu
 \end{equation}
 where we identify $\Eu$ with its tangent vector field. Thus, $\Eu$ always denotes the ``quantum Euler field'' of
 \eqref{eq:euler-grading}, whereas $\Eu^{\mathrm F}$ denotes the
 F-bundle Euler field. Their powers span the same distribution.
 Throughout the paper, spectral eigenvalues refer to $K^{\mathrm F}$.

 \subsection{Rational Hodge classes}\label{sec:hodge}
 \subsubsection*{Basics on Hodge theory}
 Write
 \[
 H_{\Hdg}(X):=
 \bigoplus_{p=0}^{n}
 \bigl(\rH^{2p}(X,\Q)\cap H^{p,p}(X)\bigr),
 \]
 where the intersection is taken inside $\rH^{2p}(X,\C)$ and $H^{p,p}(X)$ denotes the corresponding Hodge summand.
 Recall (see, e.g., \cite{moonen1999notes}) that the category $\mathbb{Q}\mathbf{HS}$, whose objects are finite direct sums of pure polarizable $\mathbb{Q}$-Hodge structures, is Tannakian. Consequently, it is equivalent to the representation category $\operatorname{Rep}\mathsf{MT}$, where $\mathsf{MT}$ is the pro-reductive algebraic group over $\mathbb{Q}$ known as the \emph{universal Mumford-Tate group}. Concretely, recall that a pure $\mathbb{Q}$-Hodge structure of weight $n\in \mathbb{Z}$ consists of a finite-dimensional $\mathbb{Q}$-vector space $V$ and a homomorphism $h\colon\mathbb{S}\rightarrow \mathrm{GL}(V)_{\mathbb{R}}$ of algebraic groups over $\mathbb{R}$, where $\mathbb S:=\operatorname{Res}_{\C/\mathbb R}\mathbb G_{m,\C}$ is the \emph{Deligne torus} \cite[\S\S1.2--1.3]{moonen1999notes}, such that: $h\circ w\colon\mathbb{G}_{m,\mathbb{R}}\rightarrow\mathrm{GL}(V)_{\mathbb{R}}$ is given by $z\mapsto z^{-n}\cdot \mathrm{id}_V$, where $w\colon \mathbb{G}_{m,\mathbb{R}}\rightarrow \mathbb{S}$ is the weight cocharacter given on points by the natural inclusion $\mathbb{R}^{\times}=\mathbb{G}_{m,\mathbb{R}}(\mathbb{R})\hookrightarrow \mathbb{S}(\mathbb{R})\cong \mathbb{C}^{\times}$. A point $(z_1,z_2)\in \mathbb{S}(\mathbb{C})\cong \mathbb{C}^{\times}\times\mathbb{C}^{\times}$ acts on $V^{p,q}$ as $z_1^{-p}z_2^{-q}$, where
 \[
 V\otimes_{\mathbb{Q}}\mathbb{C}=\bigoplus_{p+q=n}V^{p,q}.
 \]
 The \emph{Tate Hodge structure} $\Q(-n)$ has underlying rational vector space $(2\pi\sqrt{-1})^{-n}\Q$ and is pure of type $(n,n)$, hence of weight $2n$. A polarization of $V$ is a morphism of Hodge structures \[   \varphi\colon V\otimes V\longrightarrow\Q(-n) \] such that the rational bilinear form $S:=(2\pi\sqrt{-1})^n\varphi$ is $(-1)^n$-symmetric and the Hermitian form \[   (v,w)\longmapsto S(Cv,\overline w) \] on $V_\C$ is positive definite, where the Weil operator $C$ acts on $V^{p,q}$ as $(\sqrt{-1})^{p-q}$.
 
 The Mumford-Tate group $\mathsf{MT}(V)$ of a specific Hodge structure $V$ is the smallest algebraic subgroup $M \subset \mathrm{GL}(V)$ defined over $\mathbb{Q}$ such that $h$ factors through $M_{\mathbb{R}}$. The category $\Q\mathbf{HS}$ is neutral Tannakian with respect to the forgetful fiber functor $\omega$. If $\langle V\rangle^\otimes\subset\Q\mathbf{HS}$ denotes the full Tannakian subcategory generated by the Hodge structure $V$, then \[   \operatorname{Aut}^{\otimes}   \bigl(\omega|_{\langle V\rangle^\otimes}\bigr)   \cong\mathsf{MT}(V). \]
 
 Let $\chi_{\mathrm{Tate}}\colon\mathsf{MT}\longrightarrow   \mathbb G_m$ be the character describing the action on $\Q(1)$. We define the \emph{universal Hodge group} by \[   \mathsf{Hod}:=\ker\chi_{\mathrm{Tate}}. \] Its representations describe polarizable Hodge structures modulo Tate twists, with the usual Tannakian completion of the orbit category. Explicitly, for a specific $\mathbb{Q}$-polarizable pure Hodge structure $(V,h,\varphi)$, its associated Hodge group $\mathsf{Hod}(V)$ is the smallest algebraic subgroup $H \subset \mathrm{GL}(V)$ defined over $\mathbb{Q}$ such that the restriction $h|_{\mathbb{U}_1}\colon\mathbb{U}_1\rightarrow \mathrm{GL}(V)_{\mathbb{R}}$ factors through $H_{\mathbb{R}}$. Here, $\mathbb{U}_1 \subset \mathbb{S}$ is the real algebraic torus whose real points form the circle group $\mathbb{U}_1(\mathbb{R})=\left\{z\in \mathbb{C}^{\times}:|z|=1\right\}$. The torus $\mathbb U_1$ is the kernel of the norm map $\mathbb S\to\mathbb G_{m,\mathbb R}$. On $\Q(m)$, the $\mathbb S$-action is the character $(z\bar z)^m$, whose restriction to $\mathbb U_1$ is trivial. Consequently, Tate twisting does not change the restricted $\mathbb U_1$-representation or its rational algebraic envelope. 
 
 Let $\mathsf{Hod}(X)$ be the algebraic image of $\mathsf{Hod}$ in
 $\mathrm{GL}(\rH^*(X,\Q))$. It is a connected reductive algebraic
 group of finite type, and
 \[
 \rH^*(X,\Q)^{\mathsf{Hod}(X)}=H_{\Hdg}(X).
 \]
 Indeed, the circle acts on a class of type $(p,q)$ through
 $z^{q-p}$, so its fixed vectors have $p=q$.
 By reductivity, there is a graded invariant complement
 $V_{\mathrm{nt}}$ with
 \[
 \rH^{\mathrm{ev}}(X,\Q)=H_{\Hdg}(X)\oplus V_{\mathrm{nt}}.
 \]
 This justifies the choice of homogeneous basis made in
 Section~\ref{sec:A-model}: first extend the unit and divisor classes
 $T_0,\ldots,T_\rho$ to a basis of $H_{\Hdg}(X)$, then choose a
 basis of $V_{\mathrm{nt}}$.
 
 \subsubsection*{Hodge group action on the base of the A-model F-bundle}
 
 After base change to $\Bbbk$, the analytification of $\mathsf{Hod}(X)_\Bbbk$ has an analytic group germ at the identity. This germ acts on $B_X$, fixing the divisor coordinates and acting linearly on the additive coordinates, as in \cite[\S\S5.2.1--5.2.2]{KKPY}.
 
 We define $B_X^{\Hod}\subset B_X$ as the fixed locus of this analytic group germ, which corresponds to the connected, smooth Hodge-fixed base introduced in \cite[\S5.4]{KKPY}. In coordinates, this is the locus where all additive coordinates outside $H_{\Hdg}(X)$ vanish and all divisor coordinates are kept. The evaluation pushforwards \[   (\operatorname{ev}_1,\ldots,\operatorname{ev}_r)_*   [\overline{\mathcal M}_{0,r}(X,\beta)]^{\mathrm{vir}} \] are rational algebraic cycle classes on $X^r$. Their cohomology classes are therefore invariant under the diagonal Hodge-group action. The Poincar\'e pairing is also invariant under $\mathsf{Hod}(X)$. Contracting these invariant tensors shows that the quantum product is $\mathsf{Hod}(X)$-equivariant. At a point of $B_X^{\Hod}$, it consequently preserves $H_{\Hdg}(X)\otimes_\Q\Bbbk$. Moreover, $\Eu$ is invariant on this locus and $\Grad$ preserves $H_{\Hdg}(X)$. Hence, the quantum connection restricts to $\cH_{\Hdg}$. Consequently, the restriction
 \[
 \cH_{\Hdg}:=H_{\Hdg}(X)\otimes_\Q\cO_{B_X^{\Hod}\times\mathbb D}
 \]
 forms a purely even F-bundle. At every rational point $b$ of this base, the tangent space identifies with the invariant subspace of $T_bB_X$, yielding $T_bB_X^{\Hod}\cong H_{\Hdg}(X)\otimes_\Q\Bbbk$. Evaluating the quantum product on the identity class $\mathbf1$ verifies maximality.
 
 \begin{lemma}[{\cite[Lem.~5.19]{KKPY}}]\label{lem:reduced-spectrum}
  At a rational point $b\in B_X^{\Hod}(\Bbbk)$, the
  operator $K_b^{\mathrm F}$ has the same distinct eigenvalues on
  $\rH^*(X,\Bbbk)$ as on
  $H_{\Hdg}(X)\otimes_\Q\Bbbk$.
 \end{lemma}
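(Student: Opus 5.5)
The plan is to compare minimal polynomials rather than spectra directly. The distinct eigenvalues of an endomorphism of a finite-dimensional $\Bbbk$-vector space are exactly the roots of its minimal polynomial. So it suffices to show that $K_b^{\mathrm F}$ on $\rH^*(X,\Bbbk)$ and its restriction to $H_{\Hdg}(X)\otimes_\Q\Bbbk$ have the same minimal polynomial. Restriction to an invariant subspace can only shrink the minimal polynomial, so the content is the reverse divisibility.

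\textbf{Step 1: reduce annihilation to the unit.} By \eqref{eq:sign}, $K_b^{\mathrm F}=-\Eu_b\star_b(-)$, and by \eqref{eq:F-tangent-product} the algebra $(\rH^*(X,\Bbbk),\star_b)$ is associative and supercommutative with unit $\mathbf1$. Since $\Eu_b$ is even, it lies in the supercenter. Hence for every $p\in\Bbbk[z]$ and every $x$,
\[
p(K_b^{\mathrm F})\,x=p_{\star}(-\Eu_b)\star_b x ,
\]
which is \eqref{eq:F-Euler-powers} combined with $\mu(\xi)=T_\xi\star(-)$. Taking $x=\mathbf1$, this gives
\[
p(K_b^{\mathrm F})=0 \text{ on } \rH^*(X,\Bbbk)
\iff p_\star(-\Eu_b)=0
\iff p(K_b^{\mathrm F})\mathbf1=0 .
\]
This is the usual statement that in the regular representation of a unital algebra, an element and its multiplication operator have the same minimal polynomial.

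\textbf{Step 2: use that the Hodge part contains the data.} Section~\ref{sec:hodge} gives three facts at $b\in B_X^{\Hod}(\Bbbk)$:
\begin{itemize}
 \item[(a)] $\star_b$ preserves $H_{\Hdg}(X)\otimes\Bbbk$, by $\mathsf{Hod}(X)$-equivariance;
 \item[(b)] $\Eu_b$ is $\mathsf{Hod}(X)$-invariant, so $\Eu_b\in H_{\Hdg}(X)\otimes\Bbbk$;
 \item[(c)] $\mathbf1=T_0\in H_{\Hdg}(X)$.
\end{itemize}
Thus $A_{\Hdg}:=H_{\Hdg}(X)\otimes\Bbbk$ is a unital subalgebra containing $\Eu_b$, and $K_b^{\mathrm F}$ preserves it.

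Now let $p$ be the minimal polynomial of $K_b^{\mathrm F}|_{A_{\Hdg}}$. Since $\mathbf1\in A_{\Hdg}$, we get $p(K_b^{\mathrm F})\mathbf1=0$. By Step~1 this forces $p(K_b^{\mathrm F})=0$ on all of $\rH^*(X,\Bbbk)$. Hence the two minimal polynomials coincide, and therefore so do the sets of distinct eigenvalues. Algebraic multiplicities may of course differ; the lemma claims only equality of the sets.

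\textbf{Main obstacle.} The only delicate point is the super setting in Step~1. The identity $K^k x=(-\Eu_b)^{\star k}\star x$ uses associativity and the fact that the even element $\Eu_b$ supercommutes with odd classes without any sign. Both hold by \eqref{eq:F-commuting-action} and \eqref{eq:F-tangent-product}.

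One should also check that $\Eu_b$ really lies in the Hodge part. This holds because on $B_X^{\Hod}$ the non-Hodge coordinates vanish, and $c_1(TX)$ is a Hodge class, so \eqref{eq:euler-grading} has no component outside $H_{\Hdg}(X)$.
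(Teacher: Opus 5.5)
Your proof is correct, and it is essentially the argument the paper relies on. The paper cites this lemma from \cite[Lem.~5.19]{KKPY} without reproducing a proof, and it uses the same regular-representation fact (a polynomial in $\Eu_b\star_b(-)$ vanishes exactly when its value on $\mathbf1$ vanishes) in the proof of Lemma~\ref{lem:cyclic} and before \eqref{eq:cyclic-locus}; together with the facts from Section~\ref{sec:hodge} that $\mathbf1$ and $\Eu_b$ lie in the $\star_b$-stable subspace $H_{\Hdg}(X)\otimes_\Q\Bbbk$, this gives equal minimal polynomials and hence the same distinct eigenvalues.
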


 In many of our arguments, we restrict our computations to small quantum multiplication. Next, we describe this locus within the A-model base. We also introduce a specialized F-bundle that naturally governs the quantum differential equations of complete intersections, following \cite[\S6.1]{KKPY}.
 
 \subsection{The small locus and ambient classes}\label{sec:small}
 
 The \emph{small locus} is the analytic subspace of $B_X$ defined by $y_i=0$ for every additive coordinate; it is naturally identified with $U_X$. Suppose that $X$ is a Fano variety of positive dimension. Write $c_1(TX)=rP$, where $P$ is a primitive integral ample divisor class and $r>0$ is the Fano index. Expressing $P=\sum_m p_m T_m$, we define the \emph{one-parameter small locus along the primitive anticanonical ray} by the relations
 \begin{equation}\label{eq:very-small}
  q_m=q^{p_m},\qquad\text{so that}\qquad q^\beta=\prod_m q_m^{\int_\beta T_m}=q^{\int_\beta P},
  \qquad 0<|q|<1.
 \end{equation}
 By applying the chain rule $q\partial_q = \sum_m p_m q_m\partial_{q_m}$, the full quantum connection \eqref{eq:quantum-connection} restricted to this locus reduces to
 \begin{equation}\label{eq:small}
  \nabla_{\partial_u}=\partial_u+\frac{K^{\mathrm F}}{u^2}+\frac{G}{u},
  \qquad
  \nabla_{\partial_q}=\partial_q+\frac{P\star}{uq},
  \qquad
  K^{\mathrm F}=-c_1(TX)\star=-r(P\star),
  \quad G=\Grad.
 \end{equation}
 On this ray, the structure constants of the small quantum product lie in $\Q[q]$. Indeed, the dimension constraint fixes $c_1(TX)\cdot\beta$ for each homogeneous structure constant. Ampleness bounds the contributing effective curve classes, and only finitely many contribute in each such degree.

 Suppose, in addition, that the Fano variety $X$ is
 the smooth zero locus of a regular section of a vector
 bundle $\cV$ on a smooth projective variety $Y$. We write
 $\iota\colon X\hookrightarrow Y$ for the canonical inclusion. Let
 $\mathbf R:=\operatorname{Im}(\iota^*\colon\rH^*(Y,\Q)\to\rH^*(X,\Q))$ be its
 graded \emph{ambient} subring. Adjunction yields
 $c_1(TX)=\iota^*(c_1(TY)-c_1(\cV))\in\mathbf R$.
 Since $P=c_1(TX)/r$, both $c_1(TX)$ and $P$ belong to $\mathbf R$. If $\mathbf R$ is closed under the small quantum product, multiplication by either class preserves $\mathbf R$. The grading operator also preserves $\mathbf R$, because $\iota^*$ is graded. Hence $\mathbf R\otimes_\Q\cO$ is preserved by the connection \eqref{eq:small} and defines an analytic sub-F-bundle over \eqref{eq:very-small}. This sub-F-bundle need not be maximal. Its operators are the restrictions $K^{\mathrm F}|_{\mathbf R}$ and $G|_{\mathbf R}$.
 
 \section{Formal exponential type}
 \label{sec:formal}
 
 We use the formal local theory of meromorphic connections \cite[Ch.~3]{PutSinger}.
 
 Write $\Bbbk[[u]]$ for the $u$-adic completion of the ring
 $\cO_{\mathbb D,0}$ of analytic germs at $u=0$, and
 $\Bbbk((u)):=\Bbbk[[u]][u^{-1}]$ for its fraction field. For an F-bundle $(\cH,\nabla)/B$ and a $\Bbbk$-rational point $b\in B(\Bbbk)$, put
 \[
 \widehat\cH_b:=(\cH|_{\{b\}\times\mathbb D})_0
 \otimes_{\cO_{\mathbb D,0}}\Bbbk[[u]],
 \qquad {\widehat \cH}_b^{\mathrm{mer}}:=\widehat\cH_b[u^{-1}].
 \] The spaces $\widehat\cH_b$ and ${\widehat \cH}_b^{\mathrm{mer}}$ are fibers at $b$ of infinite-dimensional bundles $\widehat\cH$, $\widehat\cH^{\mathrm{mer}}$ on $B$, and are finitely-generated free modules over $\Bbbk[[u]]$ and $\Bbbk((u))$, respectively. Operators of multiplication by $u$
  and of covariant derivative $\nabla_{u^2 \partial_u}$ are $\mathcal O_B$-linear operators on these two infinite-dimensional bundles on $B$. Moreover, $\widehat\cH^{\mathrm{mer}}$ carries a flat connection along $B$.
 
 For an endomorphism $A$, $\spec(A)$
 denotes its set of distinct eigenvalues and
 $\mult_{\mathrm{alg}}(\lambda;A)$ their algebraic multiplicities.
 Double braces $\ms{\cdots}$ denote multisets and $\cl\alpha:=\alpha+\Z\in\Bbbk/\Z$.
 
 \subsection{Logarithmic lattices and exponents}
 \label{def:reg-sing}
 A \emph{meromorphic connection} over $\Bbbk((u))$ is a finite-dimensional $\Bbbk((u))$-vector space $\cM$ equipped with a $\Bbbk$-linear operator $\nabla_{\partial_u}\colon \cM \to \cM$ satisfying the Leibniz rule $\nabla_{\partial_u}(fv) = \partial_u(f)v + f\nabla_{\partial_u}(v)$ for all $f \in \Bbbk((u))$ and $v \in \cM$. A \emph{lattice} is a finitely generated free $\Bbbk[[u]]$-submodule $\cL\subset\cM$ that spans $\cM$ over $\Bbbk((u))$, meaning $\cL[u^{-1}] = \cM$. We say a lattice $\cL$ is \emph{logarithmic} if $u\nabla_{\partial_u}\cL\subseteq\cL$. The connection is \emph{regular singular} if it admits a logarithmic lattice $\cL$. In this case, the \emph{residue} of the connection with respect to $\cL$ is the well-defined $\Bbbk$-linear endomorphism of the finite-dimensional vector space $\cL/u\cL$ induced by the operator $u\nabla_{\partial_u}$. Because $\Bbbk[[u]]$ is a discrete valuation ring, any two lattices in $\cM$ are commensurable.
 \begin{lemma}[{\cite[\S3.1.1, Lem.~3.10, and Rem.~3.20(1)]{PutSinger}}]\label{lem:lattice-independence}
  Let $(\cM,\nabla)$ be a regular-singular meromorphic connection over
  $\Bbbk((u))$.
  \begin{enumerate}
   \item The multiset in $\Bbbk/\Z$ of residue eigenvalues modulo $\Z$, counted
   with algebraic multiplicity, does not depend on the choice of
   logarithmic lattice.
   \item Every differential submodule $\cN\subseteq\cM$ is regular singular.
   Its multiset of residue eigenvalues modulo $\Z$ is a submultiset of
   that of $\cM$.
  \end{enumerate}
 \end{lemma}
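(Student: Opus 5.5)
The plan is to reduce both statements to linear algebra over $\Bbbk$ by comparing logarithmic lattices, in the spirit of \cite[Ch.~3]{PutSinger}.

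For (i), let $\cL,\cL'$ be two logarithmic lattices. Since any two lattices are commensurable, the sum $\cL+\cL'$ and the intersection $\cL\cap\cL'$ are again lattices. Both are logarithmic, because $u\nabla_{\partial_u}$ preserves each of $\cL$ and $\cL'$. It therefore suffices to treat the nested case $\cL'\subseteq\cL$. Choose $k$ with $u^k\cL\subseteq\cL'\subseteq\cL$. Multiplication by $u^k$ gives an isomorphism $\cL/u\cL\to u^k\cL/u^{k+1}\cL$ that intertwines the residue with the residue shifted by $+k$, since $u\nabla_{\partial_u}(u^kv)=u^k(u\nabla_{\partial_u}+k)v$. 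So $\cL$ and $u^k\cL$ have the same residue multiset modulo $\Z$.

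We then compute the multiset of $\cL/u\cL$ through the filtration $u^k\cL\subseteq\cL$. The quotient $\cL/u^k\cL$ is a finite-dimensional $\Bbbk$-space stable under $u\nabla_{\partial_u}$. It is filtered by the subspaces $u^j\cL/u^k\cL$, with graded pieces $u^j\cL/u^{j+1}\cL$, on which the residue acts with eigenvalues shifted by $j$. Hence its characteristic polynomial, read modulo $\Z$, is $k$ copies of the residue multiset of $\cL$. The same count applied to the chains $u^k\cL\subseteq\cL'$ and $u^{k+1}\cL'\subseteq u\cL'$ shows that the residue multiset of $\cL'$ agrees with that of $\cL$ modulo $\Z$.

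The cleanest way to organize this is additivity. Define $\chi(\cL)$ to be the image in the free abelian group on $\Bbbk/\Z$ of the eigenvalue multiset of $\cL/u\cL$. For any pair of $u\nabla$-stable lattices $L_1\subseteq L_2$, the multiset of the finite-dimensional quotient $L_2/L_1$ is additive along chains. Moreover, $\dim(L_2/L_1)$ is a multiple of the rank in a way compatible with these shifts. I expect this bookkeeping, in particular showing that $[\cL/\cL']$ splits compatibly so that $\chi(\cL)=\chi(\cL')$, to be the main obstacle. I would handle it via the two-step chain $u\cL\subseteq u\cL'+\ldots$, or more simply by reducing to the case $u\cL\subseteq\cL'\subseteq\cL$ through a sequence of intermediate lattices $\cL_j=\cL'+u^j\cL$, each of which is logarithmic.

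In the one-step case $u\cL\subseteq\cL'\subseteq\cL$, set $W=\cL'/u\cL\subseteq\cL/u\cL$. Then $W$ is residue-stable, and $\cL'/u\cL'$ has the filtration $u\cL/u\cL'\subseteq\cL'/u\cL'$. Its graded pieces are $W$ and $u\cL/u\cL'\cong(\cL/\cL')$ shifted by $+1$. So the eigenvalues of $\cL'/u\cL'$ are those of $W$ together with those of $(\cL/u\cL)/W$ shifted by $1$. This is the same multiset modulo $\Z$, which proves (i).

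For (ii), let $\cN\subseteq\cM$ be a differential submodule and put $\cL_\cN:=\cL\cap\cN$. This is a finitely generated $\Bbbk[[u]]$-submodule of a free module over a DVR, hence free. It spans $\cN$, because any $v\in\cN$ satisfies $u^kv\in\cL$ for some $k$. It is logarithmic, since both $\cL$ and $\cN$ are stable under $u\nabla_{\partial_u}$, so $\cN$ is regular singular. Moreover, $\cL_\cN$ is saturated in $\cL$: if $uv\in\cL_\cN$ with $v\in\cL$, then $v\in\cN$. Hence $\cL_\cN/u\cL_\cN\to\cL/u\cL$ is injective and residue-equivariant. The residue multiset of $\cN$ is therefore a submultiset of that of $\cM$ computed on $\cL$, and passing to $\Bbbk/\Z$ via (i) finishes the proof.
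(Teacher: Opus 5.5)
Your proof is correct, and it takes a different route from the paper, which gives no argument of its own and simply defers to van der Put--Singer's treatment of regular-singular modules. You give a self-contained lattice computation instead.

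\textbf{Part (i).} The following steps are complete:
\begin{enumerate}
\item reduce to nested lattices via $\cL\cap\cL'$;
\item reduce further to one-step inclusions $u\cL_j\subseteq\cL_{j+1}\subseteq\cL_j$, with $\cL_j=\cL'+u^j\cL$, each logarithmic because $u^j\cL$ is;
\item observe that a one-step modification only translates the eigenvalues on $(\cL/u\cL)/W$ by $1$.
\end{enumerate}

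\textbf{Part (ii).} The saturation $\cL_\cN\cap u\cL=u\cL_\cN$ is exactly what makes $\cL_\cN/u\cL_\cN\to\cL/u\cL$ injective and residue-equivariant. Hence the characteristic polynomial for $\cN$ divides the one for $\cM$.

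\textbf{Comparison.} Your route needs neither the constant-residue normal form nor the classification of regular-singular modules, and it shows transparently why only classes modulo $\Z$ survive. The citation buys brevity and places the statement inside the normal-form theory (Prop.~3.12 there), which the paper relies on later anyway.

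\textbf{One cleanup.} Delete or repair the ``additivity'' paragraph.
\begin{enumerate}
\item The aside that $\dim(L_2/L_1)$ is a multiple of the rank is false. For $u\cL\subseteq\cL'\subseteq\cL$ the quotient can have any dimension up to $\rk\cM$.
\item The chains $u^k\cL\subseteq\cL'$ and $u^{k+1}\cL'\subseteq u\cL'$ do not by themselves compare the two multisets.
\end{enumerate}
The bookkeeping you were worried about is actually a one-liner. For nested logarithmic lattices $\cL'\subseteq\cL$, compute the eigenvalue class of the finite-dimensional quotient $\cL/u\cL'$ in the free abelian group on $\Bbbk/\Z$ in two ways: along $u\cL'\subseteq\cL'\subseteq\cL$, and along $u\cL'\subseteq u\cL\subseteq\cL$. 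Since $u\cL/u\cL'\cong\cL/\cL'$ with eigenvalues shifted by $1$, you get $\chi(\cL')+[\cL/\cL']=\chi(\cL)+[\cL/\cL']$. Hence $\chi(\cL')=\chi(\cL)$, with no one-step reduction needed. Your one-step argument already finishes (i), so nothing is actually missing.
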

 
 \begin{definition}\label{def:exponents}
  For $\lambda\in\Bbbk$, define \[   \cE^{\lambda/u}:=\Bbbk((u))e_\lambda,   \qquad   \nabla_{\partial_u}e_\lambda   =-\lambda u^{-2}e_\lambda. \] Thus \[   \nabla_{\partial_u}(f e_\lambda)   =(f'-\lambda u^{-2}f)e_\lambda. \] After adjoining an invertible differential symbol $s_\lambda$ satisfying $s_\lambda'=\lambda u^{-2}s_\lambda$, the section $s_\lambda e_\lambda$ is horizontal. We write $s_\lambda=e^{-\lambda/u}$. 
  
  A meromorphic connection $\cM$ is said to be of \emph{unramified exponential type} if it decomposes as
  \begin{equation}\label{eq:elementary}
   \cM\cong\bigoplus_{\lambda\in\Lambda}
   \cE^{\lambda/u}\otimes\cR_\lambda
  \end{equation}
  for a finite indexing set $\Lambda\subset\Bbbk$ and a collection of regular-singular connections $\cR_\lambda$. 
  
  For a regular-singular connection $\cR$, we define its multiset of exponents:
  \[
  \Exp(\cR):=\bigl\{\bigl\{ -\vartheta+\Z \bigm| \vartheta\text{ is a residue eigenvalue of } \cR \bigr\}\bigr\},
  \]
  where the elements are counted with their algebraic multiplicities. We extend this definition to the elementary factors setting $\Exp(\cE^{\lambda/u}\otimes\cR_\lambda):=\Exp(\cR_\lambda)$.
  
  A regular-singular connection has \emph{rational residue eigenvalues} if the residue eigenvalues of one, equivalently every, logarithmic lattice lie in $\Q$. We abbreviate this condition to \emph{rational residues}. We say that a connection of unramified exponential type, as in \eqref{eq:elementary}, has rational residues if this condition holds for every regular-singular component $\cR_\lambda$. Because changing the underlying logarithmic lattice only shifts the residue eigenvalues by integers, Lemma~\ref{lem:lattice-independence} ensures that both the rationality condition and the multiset of exponents $\Exp$ are well-defined and lattice-independent.
 \end{definition}
 
 In a decomposition \eqref{eq:elementary} with nonzero summands,
 the exponential parameters and their differential summands are intrinsic; the regular-singular factors are unique up to isomorphism
 \cite[Rem.~3.55(1)]{PutSinger}.
 
 \begin{lemma}[{\cite[Prop.~3.19 and Rem.~3.20(1)]{PutSinger}}]
  \label{lem:splitting}
  Let $F$ be an algebraically closed field of characteristic zero, and let $\cL$ be an $F[[u]]$-lattice in a meromorphic connection $\cM$ over $F((u))$ such that $u^2\nabla_{\partial_u}\cL\subseteq\cL$. Define the leading endomorphism
  \[
  A_0:=(u^2\nabla_{\partial_u})\bmod u\cL\in\End_F(\cL/u\cL),
  \]
  and consider the generalized eigenspace decomposition $\cL/u\cL=\bigoplus_{\lambda}V_\lambda$, where $V_\lambda$ corresponds to the eigenvalue $\lambda \in \spec(A_0)$. Then:
  \begin{enumerate}
   \item There exists a unique decomposition $\cL=\bigoplus_\lambda\cL_\lambda$ into $u^2\nabla_{\partial_u}$-stable $F[[u]]$-submodules such that the reduction modulo $u$ yields $\cL_\lambda/u\cL_\lambda=V_\lambda$.
   \item This lattice decomposition induces a direct sum decomposition of the meromorphic connection $\cM=\bigoplus_\lambda\cM_\lambda$, where each $\cM_\lambda:=\cL_\lambda[u^{-1}]$ is a differential submodule of rank $\rk\cM_\lambda=\dim V_\lambda$.
   \item For each eigenvalue $\lambda$, the connection $\cE^{\lambda/u}\otimes\cM_\lambda$ has a nilpotent leading endomorphism given by $(A_0-\lambda)|_{V_\lambda}$. Consequently, the full connection factors as
   \[
   \cM\cong\bigoplus_\lambda\cE^{-\lambda/u}\otimes\bigl(\cE^{\lambda/u}\otimes\cM_\lambda\bigr).
   \]
  \end{enumerate}
 \end{lemma}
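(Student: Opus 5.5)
The statement is the classical formal splitting lemma (Turrittin–Levelt style block diagonalization). I would prove it by constructing a gauge transformation, order by order in $u$, that block-diagonalizes $u^2\nabla_{\partial_u}$ according to the generalized eigenspaces of $A_0$.

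\textbf{Setting up.} Choose an $F[[u]]$-basis of $\cL$ adapted at $u=0$ to $\cL/u\cL=\bigoplus_\lambda V_\lambda$. This is possible because $F$ is algebraically closed, so $A_0$ has all its eigenvalues in $F$ and the $V_\lambda$ exhaust $\cL/u\cL$. In this basis write
\[
u^2\nabla_{\partial_u}=u^2\partial_u+A(u),\qquad A(u)=\sum_{k\ge0}A_ku^k,
\]
with $A_0$ block diagonal and the blocks having pairwise disjoint spectra.

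\textbf{Recursive gauge.} I look for $P(u)=\id+\sum_{k\ge1}P_ku^k\in\mathrm{GL}(\cL)$ such that
\[
P^{-1}AP+P^{-1}u^2P'=B=\sum_k B_ku^k
\]
is block diagonal. Comparing coefficients of $u^k$ gives
\[
[A_0,P_k]=B_k-A_k+(\text{terms built from }P_j,B_j\text{ with }j<k).
\]
The term $u^2P'$ only contributes $(k-1)P_{k-1}$, which is known at step $k$. Split this equation into diagonal and off-diagonal blocks:
\begin{itemize}
\item On the diagonal blocks, set $P_k=0$ and let $B_k$ absorb the right-hand side.
\item On an off-diagonal block $(\lambda,\mu)$, the map $X\mapsto A_0|_{V_\lambda}X-XA_0|_{V_\mu}$ is bijective because the spectra are disjoint (Sylvester). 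So $P_k$ is uniquely determined, with $B_k=0$ there.
\end{itemize}
The series $P$ is formal, which is all that is needed over $F[[u]]$. Set $\cL_\lambda:=P(V_\lambda\otimes F[[u]])$. These are $u^2\nabla_{\partial_u}$-stable and reduce to $V_\lambda$ mod $u$, which proves existence in (i).

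\textbf{Uniqueness in (i).} Suppose $\cL=\bigoplus\cL'_\lambda$ is another such decomposition. Consider the composite $\cL_\lambda\hookrightarrow\cL\twoheadrightarrow\cL'_\mu$ for $\lambda\neq\mu$. It commutes with $u^2\nabla_{\partial_u}$, so its coefficients satisfy the same homogeneous Sylvester recursion. Its leading term vanishes, and inductively every term vanishes. This forces $\cL_\lambda\subseteq\cL'_\lambda$, and comparing reductions mod $u$ gives equality.

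\textbf{Part (ii).} Inverting $u$ in the decomposition of (i) gives $\cM=\bigoplus_\lambda\cM_\lambda$, with each summand stable under $\nabla_{\partial_u}=u^{-2}(u^2\nabla_{\partial_u})$. Since $\cL_\lambda$ is free with $\cL_\lambda/u\cL_\lambda=V_\lambda$, we get $\rk\cM_\lambda=\dim V_\lambda$.

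\textbf{Part (iii).} By Definition~\ref{def:exponents}, tensoring with $\cE^{\lambda/u}$ replaces $u^2\nabla_{\partial_u}$ by $u^2\nabla_{\partial_u}-\lambda$ on the lattice $\cL_\lambda\otimes e_\lambda$. Its leading term is therefore $(A_0-\lambda)|_{V_\lambda}$, which is nilpotent. Since $\cE^{-\lambda/u}\otimes\cE^{\lambda/u}$ is trivial (via $e_{-\lambda}\otimes e_\lambda$), this yields the factorization
\[
\cM\cong\bigoplus_\lambda\cE^{-\lambda/u}\otimes\bigl(\cE^{\lambda/u}\otimes\cM_\lambda\bigr).
\]

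\textbf{Main obstacle.} The only delicate point is keeping the recursion consistent. The contribution $(k-1)P_{k-1}$ coming from $u^2\partial_u$ must be placed on the correct side of the equation at each step. Once it is, solvability at every order rests solely on the invertibility of the Sylvester operator on off-diagonal blocks, which is exactly where the disjoint-spectra hypothesis is used.
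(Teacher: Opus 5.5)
Your proof is correct and is the standard argument behind the cited van der Put--Singer result. The paper does not reprove Lemma~\ref{lem:splitting}, but it uses the same ingredients elsewhere: off-diagonal gauges $I+u^jT_j$ solved by Sylvester inversion in the proof of Proposition~\ref{prop:cyclic-transport}, and a leading-coefficient Sylvester argument for vanishing of maps between blocks with disjoint leading spectra in Corollary~\ref{cor:leading-exponential}. These match your existence and uniqueness steps, and solving for a single series $P$ rather than composing successive gauges makes no difference.
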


 \begin{corollary}\label{cor:leading-exponential} In the setting of Lemma~\ref{lem:splitting}, suppose that $\cM$ is of unramified exponential type: \[   \cM=\bigoplus_{\lambda\in\Lambda}\cN_\lambda,   \qquad   \cN_\lambda\cong\cE^{\lambda/u}\otimes\cR_\lambda, \] where every $\cR_\lambda$ is regular singular. Then \[   \Lambda=-\spec(A_0),   \qquad   \cM_\lambda=\cN_{-\lambda},   \qquad   \rk\cR_{-\lambda}=\dim V_\lambda. \] \end{corollary}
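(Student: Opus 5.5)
We compare the two decompositions of $\cM$: the intrinsic one from Lemma~\ref{lem:splitting} and the assumed unramified exponential one. First we compute the leading endomorphism of each summand $\cN_\lambda\cong\cE^{\lambda/u}\otimes\cR_\lambda$. Choose a logarithmic lattice $\cL'_\lambda\subset\cR_\lambda$, so that $u\nabla_{\partial_u}\cL'_\lambda\subseteq\cL'_\lambda$. The lattice $e_\lambda\otimes\cL'_\lambda$ in $\cN_\lambda$ is then stable under $u^2\nabla_{\partial_u}$. Indeed,
\[
u^2\nabla_{\partial_u}(e_\lambda\otimes v)=-\lambda\, e_\lambda\otimes v+e_\lambda\otimes u\,(u\nabla_{\partial_u}v),
\]
so its leading endomorphism is $-\lambda\cdot\id$, a scalar. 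Put $\cL':=\bigoplus_\lambda e_\lambda\otimes\cL'_\lambda$. This is a $u^2\nabla_{\partial_u}$-stable lattice of $\cM$. Its leading endomorphism $A_0'$ has generalized eigenspaces $\cL'_\lambda/u\cL'_\lambda$, one for each eigenvalue $-\lambda$. Lemma~\ref{lem:splitting} applied to $\cL'$ then shows that the differential summand attached to the eigenvalue $-\lambda$ is exactly $\cN_\lambda$, by the uniqueness in part~(i).

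The main obstacle is that $\cL'$ need not equal the given lattice $\cL$. We therefore need the eigenvalue data, and the resulting summands $\cM_\mu$, to be independent of the choice of $u^2\nabla_{\partial_u}$-stable lattice. I would handle this by characterizing the summands intrinsically. The summand $\cM_\mu$ built from $\cL$ has, by Lemma~\ref{lem:splitting}(iii), the property that $\cE^{\mu/u}\otimes\cM_\mu$ has a stable lattice with nilpotent leading endomorphism. Such a connection has all its slopes $<1$. For a lattice with nilpotent leading term, the Newton polygon/Katz invariant argument of \cite[Ch.~3]{PutSinger} gives slopes less than $1$.

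Now compare with the other decomposition. For $\lambda'\neq-\mu$, the module $\cE^{\mu/u}\otimes\cN_{\lambda'}\cong\cE^{(\mu+\lambda')/u}\otimes\cR_{\lambda'}$ is pure of slope exactly $1$. Since $\cE^{\mu/u}\otimes\cM_\mu$ has slopes $<1$, any differential morphism $\cM_\mu\to\cN_{\lambda'}$ vanishes: there are no nonzero morphisms between modules with disjoint slopes. The projections of $\cM_\mu$ to the summands $\cN_{\lambda'}$ with $\lambda'\neq-\mu$ are exactly such morphisms, so they are zero. Hence $\cM_\mu\subseteq\cN_{-\mu}$, where $\cN_{-\mu}:=0$ if $-\mu\notin\Lambda$.

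Symmetrically, $\cN_\lambda$ has slopes $<1$ after twisting by $\cE^{-\lambda/u}$. The same argument gives $\cN_\lambda\subseteq\cM_{-\lambda}$, where $\cM_{-\lambda}:=0$ if $-\lambda\notin\spec(A_0)$. Both decompositions exhaust $\cM$, and all summands are nonzero. Together these containments force $\cM_\mu=\cN_{-\mu}$ and $\Lambda=-\spec(A_0)$.

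The rank statement follows from rank additivity: $\rk\cR_{-\lambda}=\rk\cN_{-\lambda}=\rk\cM_\lambda=\dim V_\lambda$, using Lemma~\ref{lem:splitting}(ii).
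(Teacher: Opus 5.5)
Your proof is correct, but it reaches the key vanishing statement by a different route from the paper's.

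\textbf{The paper's route.} The paper uses no slope theory. It shows directly that a differential morphism $\phi$ between two connections with $u^2\nabla_{\partial_u}$-stable lattices and disjoint leading spectra is zero, even when $\phi$ does not preserve the lattices. Expanding $\phi=u^j\phi_j+\cdots$ and taking the lowest-order term of $u^2\phi'+B\phi-\phi A=0$ gives $B_0\phi_j-\phi_jA_0=0$. Invertibility of the Sylvester operator then forces $\phi_j=0$. The paper applies this to the components of the identity between $\bigoplus\cM_{\lambda_1}$ (leading spectrum $\{\lambda_1\}$) and $\bigoplus\cN_{\lambda_2}$ (leading endomorphism $-\lambda_2\id$, which is exactly your first-paragraph computation). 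This disposes of your ``main obstacle'' $\cL'\neq\cL$ by pure linear algebra.

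\textbf{Your route.} You instead rely on three facts from the formal theory:
\begin{enumerate}
\item a stable lattice with nilpotent leading term forces all slopes to be $<1$;
\item $\cE^{c/u}\otimes\cR$ with $c\neq0$ and $\cR$ regular singular is pure of slope $1$;
\item there are no nonzero morphisms between modules with disjoint slopes.
\end{enumerate}
All three are true, but they rest on Levelt--Turrittin and the slope decomposition. Your reference for the first is vague. In fact, one way to prove it is essentially the paper's Sylvester argument, run after ramification on the slope-$1$ part.

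\textbf{Comparison.} Your approach gives a conceptual reason the summands match: they are separated by their irregular parts. It also leaves your first paragraph essentially unused. The paper's approach is elementary and self-contained, and it makes that leading-term computation the actual input.

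\textbf{Two minor points.} First, the ``symmetric'' inclusion $\cN_\lambda\subseteq\cM_{-\lambda}$ needs $\cE^{-\lambda/u}\otimes\cM_\mu$ to be pure of slope $1$ for $\mu\neq-\lambda$. That is the statement that twisting a module of slopes $<1$ by $\cE^{c/u}$ with $c\neq0$ gives pure slope $1$, so it is not literally ``the same argument''. Second, that step is dispensable. The inclusions $\cM_\mu\subseteq\cN_{-\mu}$ for every $\mu$, together with $\bigoplus_\mu\cM_\mu=\cM=\bigoplus_\lambda\cN_\lambda$ and the nonvanishing of the $\cN_\lambda$, already yield both $\Lambda=-\spec(A_0)$ and the equalities $\cM_\mu=\cN_{-\mu}$.
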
 
 \begin{proof} Choose a logarithmic lattice in each $\cR_\lambda$. The induced lattice in $\cN_\lambda$ has leading endomorphism $-\lambda I$. We first verify that a differential morphism $\phi: \cM_{\lambda_1} \to \cN_{\lambda_2}$ between two connections with quadratic lattices and disjoint leading spectra is zero, even if the morphism does not preserve the lattices. In chosen lattice bases, write the morphism as a Laurent matrix \[   \phi(u)=u^j\phi_j+u^{j+1}\phi_{j+1}+\cdots,   \qquad \phi_j\neq0. \] If the two connection matrices are $A(u)/u^2$ and $B(u)/u^2$, horizontality gives \[   u^2\phi'(u)+B \phi(u)-\phi(u) A=0. \] The coefficient of $u^j$ is \[   B_0\phi_j-\phi_jA_0=0. \] Disjoint leading spectra make this Sylvester operator invertible, a contradiction. Apply this observation to the components of the identity map between $\cM=\bigoplus_{\lambda_1} \cM_{\lambda_1}$ and $\cM=\bigoplus_{\lambda_2}\cN_{\lambda_2}$. Every component vanishes unless $\lambda_1=-\lambda_2$. Applying the same argument in the reverse direction identifies $\cM_{\lambda_1}$ with the embedded summand $\cN_{-\lambda_1}$. The rank formula follows from Lemma~\ref{lem:splitting}. \end{proof}

 \begin{lemma}\label{lem:descent}
  Let $F\subseteq F'$ be algebraically closed fields of characteristic zero and
  let $A_1, A_2$ be matrices over $F$. Then $\partial_u+A_2u^{-2}+A_1u^{-1}$ is of
  unramified exponential type with rational residues over $F((u))$ if and only
  if its extension to $F'((u))$ is.
 \end{lemma}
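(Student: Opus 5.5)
We prove the two implications separately; the forward direction (descent from $F'$ to $F$) is the substantive one.

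\emph{Ascent.} Suppose $\cM:=(F((u))^N,\partial_u+A_2u^{-2}+A_1u^{-1})$ is of unramified exponential type with rational residues over $F((u))$. Choose a decomposition $\cM\cong\bigoplus_\lambda\cE^{\lambda/u}\otimes\cR_\lambda$ with $\lambda\in F$, and logarithmic lattices $\cL_\lambda\subset\cR_\lambda$ with rational residue eigenvalues. Tensoring with $F'((u))$ over $F((u))$ preserves direct sums and the elementary modules $\cE^{\lambda/u}$. The lattice $\cL_\lambda\otimes_{F[[u]]}F'[[u]]$ is still logarithmic, and its residue is the scalar extension of the old residue, so it has the same rational eigenvalues.

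\emph{Descent.} Assume the extension $\cM'$ is of unramified exponential type with rational residues over $F'((u))$. The lattice $\cL=F[[u]]^N$ satisfies $u^2\nabla_{\partial_u}\cL\subseteq\cL$, and its leading endomorphism is $A_0=A_2$, whose eigenvalues lie in $F$ because $F$ is algebraically closed.

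\begin{enumerate}
\item Apply Lemma~\ref{lem:splitting} over $F$. This gives $\cM=\bigoplus_\lambda\cM_\lambda$ with $\cM_\lambda=\cL_\lambda[u^{-1}]$, where $\lambda$ ranges over $\spec(A_2)\subset F$.
\item Apply the same lemma over $F'$ to $\cL\otimes F'[[u]]$. By the uniqueness in part (i), the $F'$-splitting is the base change of the $F$-splitting, since the latter already satisfies the defining properties. Corollary~\ref{cor:leading-exponential} over $F'$ then shows that $\cR'_{-\lambda}:=\cE^{\lambda/u}\otimes\cM_\lambda\otimes F'((u))$ is regular singular with rational residues.
\item It remains to show that $\cR_{-\lambda}:=\cE^{\lambda/u}\otimes\cM_\lambda$ is regular singular over $F((u))$ with rational residues. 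For this I will use a criterion for regular singularity that is insensitive to extension of algebraically closed constant fields. The cleanest choice is the Fuchs/Katz criterion: take a cyclic vector of $\cR$ over $F((u))$, which exists by the cyclic vector lemma. Regularity is then equivalent to the Newton polygon of the associated differential operator in $\delta=u\,d/du$ having a single slope $0$.
\item The cyclic vector, the operator, and its Newton polygon do not change under $F\subseteq F'$. Hence regularity of $\cR'$ forces regularity of $\cR$.
\item For rationality, take a logarithmic lattice $\cL_{\cR}$ of $\cR$ over $F$. Its base change is a logarithmic lattice of $\cR'$, and its residue is the scalar extension of the residue of $\cL_{\cR}$. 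Lemma~\ref{lem:lattice-independence} says the residue eigenvalues modulo $\Z$ do not depend on the choice of logarithmic lattice, so they are rational (mod $\Z$ rationality suffices, since they lie in $F$ and differ from rationals by integers).
\end{enumerate}

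\emph{Alternative route for step (iii).} One can avoid Newton polygons. By Lemma~\ref{lem:splitting}(iii), $\cR$ has a lattice on which $u^2\nabla$ has nilpotent leading term. Iterated shearing / Moser reduction over $F$ produces a logarithmic lattice whenever one exists over $F'$. This works because Moser's criterion (rank of the leading term, and a determinant condition on a polynomial) consists of algebraic conditions on coefficients in $F$, and these conditions are unchanged by field extension.

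The main obstacle is step (iii): making rigorous that regular singularity descends. I would rely on the Newton polygon characterization in \cite[Ch.~3]{PutSinger}, which is manifestly defined over the coefficient field.
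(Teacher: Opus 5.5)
Your proposal is correct. The splitting step matches the paper's: uniqueness in Lemma~\ref{lem:splitting} identifies the $F'$-summands with base changes of the $F$-summands, and then Corollary~\ref{cor:leading-exponential} applies. So does the rationality step, which compares the residue of an $F$-logarithmic lattice with that of any $F'$-one via Lemma~\ref{lem:lattice-independence}.

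The genuine difference is how you descend regular singularity of $\cE^{\lambda/u}\otimes\cM_\lambda$. The paper avoids cyclic vectors. It fixes an $F'$-logarithmic lattice and uses commensurability to trap every $D^je_\ell$ in $u^{-(a+b)}\cL'_0\cap(\cL_0\otimes_{F[[u]]}F((u)))=u^{-(a+b)}\cL_0$. Noetherianity then shows that $\cL_\infty=\sum_{j,\ell}F[[u]]D^je_\ell$ is a $D$-stable $F[[u]]$-lattice.

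You instead choose a cyclic vector over $F((u))$. The coefficients $a_i\in F((u))$ of its $\delta$-operator lie in $F'[[u]]$ by the Fuchs criterion over $F'$, hence in $F'[[u]]\cap F((u))=F[[u]]$. You need only the nontrivial direction of Lemma~\ref{lem:scalar-fuchs}, and only over $F'$. Over $F$, once $a_i\in F[[u]]$, the span of $v,Dv,\ldots,D^{m-1}v$ is visibly logarithmic.

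Your route is shorter and parallels the scalar method the paper later uses in Proposition~\ref{prop:cyclic-transport}. However, it imports the cyclic vector lemma and the hard half of Fuchs's theorem. The paper's argument is self-contained, needs only commensurability and Noetherianity, and produces a canonical $F$-rational logarithmic lattice. Both rest on the same elementary fact, $F((u))\cap u^{-c}F'[[u]]=u^{-c}F[[u]]$.

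The Moser-reduction alternative is not needed. As a minor point of wording, in the statement as written the forward implication is the ascent $F\Rightarrow F'$, not the descent.
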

 \begin{proof}
  Apply Lemma~\ref{lem:splitting} over $F$ to the natural lattice.
  Its uniqueness identifies the scalar extensions of its spectral
  summands with those supplied by the same lemma over $F'$.
  For an eigenvalue $\lambda$ of $A_2$, set
  $\cN_\lambda:=\cE^{\lambda/u}\otimes\cM_\lambda$.
  By Corollary~\ref{cor:leading-exponential}, unramified exponential
  type is equivalent to regular singularity of every $\cN_\lambda$.
  It remains to descend that property and rationality of residues.
  
  Fix $\cN:=\cN_\lambda$, an $F[[u]]$-lattice $\mathcal L_0$
  with basis $e_1,\ldots,e_r$, and set $D:=u\nabla_{\partial_u}$.
  If the extended connection is regular singular, choose a
  logarithmic $F'[[u]]$-lattice $\mathcal L'$ in it. Write
  $\mathcal L'_0:=\mathcal L_0\otimes_{F[[u]]}F'[[u]]$.
  Commensurability gives integers $a,b\geq0$ such that
  \[
  u^b\mathcal L'_0\subseteq\mathcal L'
  \subseteq u^{-a}\mathcal L'_0.
  \]
  The shifted lattice $u^{-b}\mathcal L'$ is also $D$-stable,
  contains $\mathcal L'_0$, and lies in
  $u^{-(a+b)}\mathcal L'_0$. Consequently, for every $j\geq0$
  and $1\leq\ell\leq r$,
  \[
  D^je_\ell\in u^{-(a+b)}\mathcal L'_0
  \cap\bigl(\mathcal L_0\otimes_{F[[u]]}F((u))\bigr)
  =u^{-(a+b)}\mathcal L_0.
  \]
  The $F[[u]]$-submodule
  \[
  \mathcal L_\infty:=
  \sum_{j\geq0,\,1\leq\ell\leq r}F[[u]]D^je_\ell
  \]
  is therefore contained in a finite free module. Since $F[[u]]$
  is Noetherian, it is finitely generated; since it contains $\mathcal L_0$ and is torsion-free, it is a full free lattice.
  The Leibniz rule makes it $D$-stable. Thus $\cN$ is regular
  singular over $F((u))$. Conversely, a logarithmic lattice
  extends to one over $F'[[u]]$.
  
  The residue of $\mathcal L_\infty$ has entries in $F$.
  After extension, its eigenvalues represent the same classes
  modulo $\Z$ as those of $\mathcal L'$ by
  Lemma~\ref{lem:lattice-independence}. If the latter are rational,
  then each eigenvalue of the former belongs to $\Q$, since it
  differs from a rational number by an integer. The same lemma
  identifies the exponent multiset before and after extension.
 \end{proof}
 
 \subsection{Fano specializations}
 
 Our main goal in this subsection is to prove the following.
 
 \begin{theorem}\label{thm:fano}
  Let $X$ be a smooth projective Fano variety and let $b$ be a rational point
  of the one-parameter small locus \eqref{eq:very-small}. Then the formal
  disk connection $\cH_b^{\mathrm{mer}}$ is of unramified exponential type
  with rational residues, and
  \begin{equation}\label{eq:fano-decomposition}
   {\widehat \cH}_b^{\mathrm{mer}}
   =\bigoplus_{\lambda\in\spec(K_b^{\mathrm F})}\cE^{-\lambda/u}\otimes\cR_{\lambda,b},
   \qquad
   \rk\cR_{\lambda,b}=\mult_{\mathrm{alg}}(\lambda;K_b^{\mathrm F}).
  \end{equation}
  The same assertion holds for a graded ambient subspace
  closed under the small quantum product, with spectrum
  and algebraic multiplicities computed for
  $K_b^{\mathrm F}|_{\mathbf R}$. On the Hodge-fixed part, these formal summands are the completed
  fibers of the local spectral decomposition given by Theorem \ref{thm:hyzz}.
 \end{theorem}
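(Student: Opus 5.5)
The plan is to treat the three assertions in order: the spectral splitting, regular singularity of each twisted block (which gives unramified exponential type), and rationality of the residues. The last sentence about the Hodge-fixed part then follows from uniqueness.

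\emph{Spectral splitting.} Fix a rational point $b$ of the one-parameter small locus \eqref{eq:very-small}. By \eqref{eq:small}, the formal disk connection is $\partial_u+K_b^{\mathrm F}u^{-2}+Gu^{-1}$ on the natural lattice $\rH^*(X,\Bbbk)\otimes\Bbbk[[u]]$. This lattice is stable under $u^2\nabla_{\partial_u}$ and has leading endomorphism $A_0=K_b^{\mathrm F}$. Lemma~\ref{lem:splitting} therefore gives a unique splitting $\widehat\cH_b^{\mathrm{mer}}=\bigoplus_\lambda\cM_\lambda$ with $\rk\cM_\lambda=\mult_{\mathrm{alg}}(\lambda;K_b^{\mathrm F})$. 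Setting $\cR_{\lambda,b}:=\cE^{\lambda/u}\otimes\cM_\lambda$, each $\cR_{\lambda,b}$ has nilpotent leading term, and \eqref{eq:fano-decomposition} holds as an abstract decomposition. By Corollary~\ref{cor:leading-exponential}, it remains only to show that every $\cR_{\lambda,b}$ is regular singular with rational residues.

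The ambient case goes the same way. The subspace $\mathbf R\otimes\Bbbk[[u]]$ is a $u^2\nabla_{\partial_u}$-stable lattice with leading term $K_b^{\mathrm F}|_{\mathbf R}$, so the same lemma applies to it.

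\emph{Regular singularity of each block (the main obstacle).} A priori the Levelt--Turrittin form of $\cR_{\lambda,b}$ may contain ramified exponentials $e^{c\,u^{-k/e}}$ with $k/e<1$. To exclude them I would use the homogeneity of the small quantum product on the ray. The structure constants lie in $\Q[q]$, and the degree axiom gives weights with $\deg q=r$ (the Fano index), $\deg u=1$. This yields the conformality identity
\[
u\nabla_{\partial_u}+r\,q\nabla_{\partial_q}=u\partial_u+rq\partial_q+G,
\]
a regular (logarithmic) operator. Consequently the rescaling $(u,q)\mapsto(cu,c^rq)$ induces isomorphisms between the formal $u$-connections at $q$ and at $c^rq$, up to the diagonal twist $c^{G}$.

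The spectral data thus move homogeneously: each eigenvalue has the form $\lambda=\lambda_0q^{1/r}$. By isomonodromy in $q$ (flatness of the $(u,q)$-connection), the formal type of $\cR_{\lambda,b}$ is locally constant along the ray. Any ramified exponent $c(q)u^{-k/e}$ in it must therefore scale as $c(q)\propto q^{k/(er)}$, while also being a horizontal invariant of the flat family.

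I would first try to show that these two constraints force $c\equiv0$. The argument would compare the full irregular data at $q$ with the algebraic Levelt--Turrittin decomposition of the $q$-system $q\partial_q+P\star/u$ over $\Bbbk(q)$. In that system the exponentials are determined by the eigenvalues of $P\star$, which are algebraic over $\Q(q)$, and they pair with $u^{-1}$ only linearly. If this comparison fails, the fallback is to invoke the known structure of the quantum $D$-module at $q=\infty$: the exponential factors are $e^{-\lambda/u}$ with $\lambda$ the eigenvalues of $K^{\mathrm F}$. This is exactly unramifiedness, since any ramified factor would contribute an additional exponential not accounted for by these eigenvalues.

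\emph{Rationality, descent, and the Hodge-fixed part.} Via the conformality identity, the residue of a logarithmic lattice in $\cR_{\lambda,b}$ corresponds to the local exponents of the $q$-connection restricted to the $\lambda$-summand. That connection is defined over $\overline{\Q}(q^{1/r})$, and its regular singularity at $q=0$ has nilpotent residue $P\cup$. I would then argue, via global nilpotence/$G$-function properties of the quantum $D$-module (Katz-type local monodromy), that its exponents are rational, which transfers to $\Exp(\cR_{\lambda,b})$. Lemma~\ref{lem:descent} lets me carry out all of this over $\overline{\Q}((u))$ or any convenient algebraically closed subfield and then extend to $\Bbbk$.

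Finally, on the Hodge-fixed part, the decomposition of Theorem~\ref{thm:hyzz} restricts at $b$ to a $u^2\nabla_{\partial_u}$-stable splitting of the natural lattice that reduces to the generalized eigenspaces of $K_b^{\mathrm F}$. The uniqueness clause of Lemma~\ref{lem:splitting} identifies its completion with the $\cM_\lambda$ above.
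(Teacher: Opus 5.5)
\textbf{There is a genuine gap.} The key step, unramified exponential type with rational residues, is not established.

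Much of your scaffolding matches the paper and is sound:
\begin{itemize}
\item splitting the natural lattice by Lemma~\ref{lem:splitting};
\item reducing, via Corollary~\ref{cor:leading-exponential}, to regular singularity and rational residues of each $\cR_{\lambda,b}$;
\item the homogeneity rescaling, which is exactly the paper's gauge $D=\operatorname{diag}(\alpha^{d_i})$ followed by $v=u/\alpha^2$, reducing to $q=1$;
\item descent by Lemma~\ref{lem:descent}, and uniqueness on the Hodge-fixed part.
\end{itemize}

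\textbf{Unramified type.} This is the step you flag as the main obstacle, and neither of your arguments closes it.
\begin{itemize}
\item Flatness in $q$ does not make the formal type of $\cR_{\lambda,b}$ constant along the ray. It only requires the $q$-derivative of an exponent $c(q)u^{-k/e}$ to have a pole of order at most one in $u$, which is automatic for $k/e<1$. Irregular types genuinely move in isomonodromic families. Your ``scaling plus horizontality'' reasoning, applied to ${\widehat\cH}_b^{\mathrm{mer}}$ itself, would make the leading exponent $\lambda(q)=\lambda_0q^{1/r}$ a horizontal invariant and force $\lambda_0=0$. That is false: the cubic threefold has eigenvalues $\pm\sqrt{108q}$.
\item Homogeneity only says each exponent is a function of $q^{1/r}/u$. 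A subleading Puiseux term $(q^{1/r}/u)^{k/e}$ at $q=\infty$ is precisely a ramified term in $u$. So comparing with the $q$-system does not show the exponents are linear in $u^{-1}$.
\item The fallback is circular. The eigenvalues of $K_b^{\mathrm F}$ fix only the leading $u^{-1}$ term of each block, and a block of type $e^{-\lambda/u}e^{cu^{-1/2}}$ is compatible with them. Excluding such a block \emph{is} the theorem.
\item Theorem~\ref{cl:formal-persistence} cannot bootstrap this either, since it assumes unramified type at one point.
\end{itemize}

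\textbf{The missing ingredient.} It is a deep external input, which the paper takes from Chen's theorem (Theorem~\ref{thm:chen}). At the specialization $q=1$, embed $\overline\Q\subset\C$ and choose a K\"ahler form representing $c_1(TX)$. Then $X$ is a monotone symplectic manifold whose small quantum product at Novikov parameter $1$ agrees with the algebraic one. So the connection is of unramified exponential type with quasi-unipotent regularized monodromies. A residue eigenvalue $\vartheta$ then gives the root of unity $e^{-2\pi i\vartheta}$, hence $\vartheta\in\Q$.

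\textbf{Rationality.} Your argument here fails independently.
\begin{itemize}
\item Under homogeneity, $u\to0$ at fixed $q$ corresponds to the irregular end $q\to\infty$, not to $q=0$. At $q=0$, $K^{\mathrm F}=-c_1(TX)\cup(-)$ is nilpotent, all eigenvalues collide, and no $\lambda$-summand exists.
\item The conformality identity relates the $q$-monodromy to the topological $u$-monodromy. That differs from the formal monodromy of $\cR_{\lambda,b}$ by Stokes data.
\item Global nilpotence or $G$-function properties of the quantum $D$-module are not an available input for an arbitrary Fano variety.
\end{itemize}

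\textbf{Ambient subspace.} For $\mathbf R$ the paper does not rerun the argument, since Chen's theorem concerns full cohomology. Instead, the Laurent-matrix argument of Corollary~\ref{cor:leading-exponential} shows that the twisted $\mathbf R$-summands are differential submodules of the $\cR_{\lambda,b}$. Lemma~\ref{lem:lattice-independence}(ii) then gives regular singularity and rational residues.
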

 
 Theorem \ref{thm:fano} is proven via the following result of Chen (Theorem \ref{thm:chen}). Earlier work of Pomerleano--Seidel
 \cite{PS} established the same under an additional geometric hypothesis.
 
 \begin{theorem}[{\cite[Thm.~1.3]{Chen}}]\label{thm:chen}
  Let $(M,\omega)$ be a closed monotone symplectic manifold of real dimension
  $2\nu$, and let $\star_1$ be its small quantum product with the monotone Novikov parameter specialized to $1$. The connection
  \[
  \partial_u-\frac{c_1(TM)\star_1}{u^2}+\frac{\mu}{u},
  \qquad \mu|_{\rH^a(M)}=\frac{a-\nu}{2}\,\id,
  \]
  on $\rH^*(M;\C)((u))$ is of unramified exponential type and has
  quasi-unipotent regularized monodromies.
 \end{theorem}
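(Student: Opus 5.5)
The plan is an arithmetic one: reduce modulo $p$, compute the $p$-curvature of the connection with quantum Steenrod operations, and conclude with Katz's criterion (nilpotent $p$-curvature for almost all $p$ implies regular singularity with rational exponents).

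\textbf{Step 1: an integral model.} By monotonicity, each structure constant of $\star_1$ receives contributions only from curve classes $\beta$ with fixed $c_1(TM)\cdot\beta$, and only finitely many such classes carry nonzero invariants. The genus-zero invariants are rational. So $c_1(TM)\star_1$ and $\mu$ are matrices over $\Z[1/N]$ for some $N$, and the connection
\[
\nabla=\partial_u-\frac{c_1(TM)\star_1}{u^2}+\frac{\mu}{u}
\]
is defined over $\Z[1/N][u,u^{-1}]$. This gives a global connection on $\mathbb G_m$: it is irregular at $u=0$ and regular singular at $u=\infty$, since at $\infty$ the term $\mu/u$ is logarithmic. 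The eigenvalues of $c_1(TM)\star_1$ are algebraic, so after adjoining them we work over a number field $K$ and its rings of $S$-integers.

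\textbf{Step 2: reduction to regularity of twisted pieces.} Apply Lemma~\ref{lem:splitting} and Corollary~\ref{cor:leading-exponential} to the natural lattice over $\overline K((u))$. This splits $\cM=\bigoplus_\lambda\cM_\lambda$ according to the generalized eigenvalues $\lambda$ of $c_1\star_1$. By Corollary~\ref{cor:leading-exponential} and Lemma~\ref{lem:descent}, it suffices to prove two things for each $\lambda$:
\begin{enumerate}
\item $\cN_\lambda:=\cE^{\lambda/u}\otimes\cM_\lambda$, whose leading term is nilpotent, is regular singular;
\item $\cN_\lambda$ has rational exponents, which is equivalent to quasi-unipotent monodromy.
\end{enumerate}
The splitting is formal. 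However, the same block decomposition can be obtained over the completed local ring of $\mathcal O_{K,S}[[u]]$: the leading spectra are disjoint, and after enlarging $S$ to invert the discriminants they stay disjoint modulo $p$, so the Sylvester argument works integrally. Hence $\cN_\lambda$ has reductions modulo almost all primes $\mathfrak p$.

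\textbf{Step 3: $p$-curvature via quantum Steenrod operations.} This is the key step. The claim to establish is that, for almost all $p$, the $p$-curvature $\psi_p=(\nabla_{\partial_u})^p$ of the reduction satisfies
\[
u^{2p}\psi_p\equiv\bigl(c_1\star_1\bigr)^p-\bigl(c_1\star_1\bigr)\quad\text{up to conjugation and nilpotent corrections.}
\]
Concretely, one shows that the quantum Steenrod power of $c_1$ is covariantly constant, so that it intertwines $\psi_p$ with a Frobenius-twisted quantum multiplication by $c_1$. This comes from the $S^1$-equivariant and $\Z/p$-equivariant structure of the $p$-fold quantum product, as in Seidel--Wilkins, and the monotone hypothesis is what makes the operations finite and defined over $\mathbb F_p$.

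\textbf{Step 4: nilpotence and Katz's criterion.} The $p$-curvature of $\cE^{\lambda/u}$ is $(\lambda/u^2)^p$ up to sign. So on $\cN_\lambda$ the $p$-curvature is controlled by $(c_1\star_1-\lambda)^p$ restricted to the $\lambda$-block, which is nilpotent. Thus each $\cN_\lambda$ has nilpotent $p$-curvature for almost all $p$. Katz's theorem, in its formal local version, then shows that $\cN_\lambda$ is regular singular and that its residue eigenvalues are rational. This gives unramified exponential type with quasi-unipotent regularized monodromies.

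The main obstacle is Step 3. Proving the covariant constancy of quantum Steenrod operations in the $u$-direction, and identifying the resulting $p$-curvature with the Frobenius-twisted $c_1\star$, requires genuine equivariant Floer or Gromov--Witten input rather than formal algebra. I would first try to derive it from the Seidel--Wilkins covariant constancy along the Novikov direction, combined with the divisor equation and the grading relation between $q\partial_q$ and $u\partial_u$ on the monotone locus.
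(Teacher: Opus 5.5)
The paper gives no proof of this statement; it quotes it from Chen. Your scaffolding is fine: the integral model, the block splitting over $\mathcal O_{K,S}[[u]]$ that survives reduction modulo almost all $\mathfrak p$, and a formal-local Katz theorem applied to each twisted block. There is one sign slip: with $\lambda$ an eigenvalue of $c_1\star_1$, the block with nilpotent leading term is $\cE^{-\lambda/u}\otimes\cM_\lambda$ in the paper's convention. But all of the content sits in Step~3, and there the proposal has a genuine gap. First, the identity you propose is false already at $u=0$. In characteristic $p$ one has $(u^2\partial_u)^p u^k=k(k+1)\cdots(k+p-1)u^{k+p}=0$, so by $p$-linearity $u^{2p}\psi_p=\psi_p(u^2\partial_u)=(\nabla_{u^2\partial_u})^p$ exactly. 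Modulo $u$ this is $(-c_1\star_1)^p$, with eigenvalues $-\lambda^p$ rather than $\lambda^p-\lambda$. The shape $y^p-y$ belongs to the $p$-curvature along an Euler-type derivation such as $q\partial_q$, and there it carries a factor $u^{p-1}$: $u^p\psi_p(q\partial_q)=(u\nabla_{q\partial_q})^p-u^{p-1}\,u\nabla_{q\partial_q}$. With your formula, the twisted $p$-curvature of the $\lambda$-block would not be nilpotent even at leading order when $\lambda\not\equiv0$, so Step~4 would not follow.

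Second, and more seriously, the corrected leading-order statement carries no information. By Lemma~\ref{lem:splitting}(iii), each twisted block already has a nilpotent leading term. So the reduction modulo $u$ of its rescaled $p$-curvature is nilpotent for \emph{every} connection of this shape, including irregular ones such as $\partial_u+u^{-2}\begin{pmatrix}0&1\\u&0\end{pmatrix}$, which has slope $1/2$. Thus ``up to conjugation and nilpotent corrections'' cannot yield nilpotence, since a nilpotent perturbation that does not commute with the main term moves the eigenvalues.

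What Step~4 needs is an exact statement. By horizontality, $\chi_{\mathfrak p}(T):=\det\bigl(T-(\nabla_{u^2\partial_u})^p\bigr)$ has coefficients in $\mathbb F_{\mathfrak p}[[u^p]]$ and reduces to $\det\bigl(T-(K^{\mathrm F})^p\bigr)$ modulo $u$. The $p$-curvatures of all twisted blocks are nilpotent if and only if every $u^p,u^{2p},\dots$ correction in $\chi_{\mathfrak p}$ vanishes, and this must be shown for almost all $\mathfrak p$.

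Your suggested route does not reach this. Covariant constancy of the quantum Steenrod operation does not by itself tie that operation to $\psi_p$, because $\psi_p$ and all polynomials in it are covariantly constant automatically. Combined with the divisor equation and the grading, it gives only horizontality and weighted homogeneity of the coefficients of $\chi_{\mathfrak p}$ in $(u,q)$, with $\deg u=2$ and $\deg q=2r$. In general this still permits terms $u^{pa}q^{pb}$ with $a,b\geq1$. The missing ingredient is an identification of the $p$-curvature with an operator whose characteristic polynomial is known exactly, to all orders in $u$. That is where the geometric input of the cited theorem has to enter, and the proposal does not supply it.
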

 
 \begin{proof}[Proof of Theorem \ref{thm:fano}]
  Let $\{\sigma_i\}$ be a homogeneous basis, with
  $d_i:=\deg\sigma_i$.
  To obtain the coefficient of $\sigma_i$ in
  $c_1(TX)\star\sigma_j$, pair with a class of degree
  $2n-d_i$. The three-point dimension constraint is
  \[
  2+d_j+(2n-d_i)=2n+2c_1(TX)\cdot\beta.
  \]
  Writing $a:=P\cdot\beta$ gives
  \begin{equation}\label{eq:homogeneity}
   d_i+2ra=d_j+2.
  \end{equation}
  Thus each nonzero matrix entry of $K_b^{\mathrm F}=-c_1(TX)\star_b$ is
  $\kappa_{ij}q^{a_{ij}}$, with
  $\kappa_{ij}\in\Q$ and
  $a_{ij}=(d_j+2-d_i)/(2r)\in\Z_{\geq0}$.
  
  Choose $\alpha\in\Bbbk^\times$ satisfying
  $\alpha^{2r}=q(b)$ and put
  $D:=\operatorname{diag}(\alpha^{d_i})$.
  Then
  \[
  (DK_b^{\mathrm F}D^{-1})_{ij}
  =\kappa_{ij}\alpha^{d_i+2ra_{ij}-d_j}
  =\alpha^2\kappa_{ij},
  \]
  so $DK_b^{\mathrm F}D^{-1}=\alpha^2K_1^{\mathrm F}$, where
  $K_1^{\mathrm F}$ denotes the algebraic specialization at $q=1$.
  Since $D$ and $G$ are diagonal in this basis,
  $DGD^{-1}=G$.
  The constant gauge transforms the connection into
  \[
  \partial_u+\frac{\alpha^2K_1^{\mathrm F}}{u^2}+\frac Gu.
  \]
  With $v=u/\alpha^2$, one has
  $\partial_v=\alpha^2\partial_u$, and hence
  \[
  \nabla_{\partial_v}
  =\alpha^2\nabla_{\partial_u}
  =\partial_v+\frac{K_1^{\mathrm F}}{v^2}+\frac Gv.
  \]
  
  More generally, under $v=cu$,
  \[
  \left(-\frac{\lambda}{u^2}+\frac{\vartheta}{u}\right)du
  =
  \left(-\frac{c\lambda}{v^2}+\frac{\vartheta}{v}\right)dv.
  \]
  Thus exponential parameters are multiplied by $c$,
  whereas residue eigenvalues are unchanged.
  It suffices to treat the algebraic specialization $q=1$ of
  the polynomial connection matrix. This is not a point of the
  non-Archimedean small locus $0<|q|<1$.
  
  Embed $\overline\Q\hookrightarrow\C$ and choose a K\"ahler form representing
  $c_1(TX)$. The resulting symplectic manifold is monotone; its small quantum
  product at Novikov parameter $1$ coincides with the algebraic one
  \cite{Siebert}, and $G$ is the grading operator in Theorem \ref{thm:chen}. By Theorem \ref{thm:chen}, the
  connection is of unramified exponential type with quasi-unipotent regularized
  monodromies. A residue eigenvalue $\vartheta$ gives the monodromy eigenvalue
  $e^{-2\pi i\vartheta}$, which is a root of unity, so $\vartheta\in\Q$. Applying
  Lemma \ref{lem:descent} to $\overline\Q\subset\C$ and then to
  $\overline\Q\subset\Bbbk$, and undoing the rescaling and the gauge, gives the
  claim at $b$.
  
  On the natural lattice $\rH^*(X,\Bbbk)[[u]]$,
  the leading endomorphism is $K_b^{\mathrm F}$.
  Corollary~\ref{cor:leading-exponential} therefore
  identifies the exponential parameters with the
  negatives of the eigenvalues of $K_b^{\mathrm F}$ and gives the rank formula
  in \eqref{eq:fano-decomposition}. For a graded ambient subspace preserved by $K_b^{\mathrm F}$, apply
  Lemma~\ref{lem:splitting} to its natural lattice as well.
  The Laurent-matrix argument in the proof of Corollary~\ref{cor:leading-exponential} shows that the inclusion
  has no components between distinct leading eigenvalues.
  After the corresponding exponential twists, its summands are
  differential submodules of the $\cR_{\lambda,b}$.
  Lemma~\ref{lem:lattice-independence} gives regular singularity
  and rational residue eigenvalues for each of them.
  
  Finally, $\cH_{\Hdg}$ is maximal, so Theorem \ref{thm:hyzz} yields a local product of
  bases and an external sum lifting the generalized eigenspaces of
  $K_b^{\mathrm F}$. Completion at $u=0$ and uniqueness of the formal
  splitting identify the fibers of the factors with the Hodge-fixed summands of
  \eqref{eq:fano-decomposition}.
 \end{proof}
 
 \subsection{Nef specialization}
 
 We next study smooth projective varieties with nef canonical class. Proposition~\ref{prop:nef} refines the regular-singularity argument of \cite[Claim~6.15]{KKPY} by identifying the exponents.
 
 \begin{proposition}\label{prop:nef}
  Let $W$ be a connected smooth projective variety of dimension $d$ with canonical class $K_W$
  nef, and let $b\in B_W^{\Hod}(\Bbbk)$. The disk connection at $b$ has the
  single exponential parameter $y_0(b)$. Moreover, after twisting by $\cE^{-y_0(b)/u}$, the connection is
  regular singular with rational residues, and on the Hodge-fixed part
  \begin{equation}\label{eq:nef-exponents}
   \rk\cR_{\Hdg}=\dim_\Q H_{\Hdg}(W),
   \qquad
   \Exp(\cR_{\Hdg})=\ms{\cl{d/2}^{\,[\dim H_{\Hdg}(W)]}},
  \end{equation}
  where the superscript $[m]$ denotes $m$ repetitions.
 \end{proposition}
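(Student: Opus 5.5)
We plan to show that, for $K_W$ nef, every quantum correction at a Hodge-fixed point is nilpotent. As a result $K_b^{\mathrm F}$ is $-y_0(b)$ plus a nilpotent operator. After the twist, an explicit logarithmic lattice then exists, and its residue is computed directly from the grading operator.

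\emph{Step 1: structure of $K_b^{\mathrm F}$.} At $b\in B_W^{\Hod}(\Bbbk)$ write $\Eu_b=y_0(b)\mathbf1+c_1(TW)+\sum_{i\neq0}(1-\tfrac{\deg T_i}{2})y_iT_i$. By the string equation, $\star_b$ does not depend on $y_0$, so $K_b^{\mathrm F}=-y_0(b)\id-N_b$ with $N_b:=\bigl(c_1(TW)+\sum_{i\neq0}(1-\tfrac{\deg T_i}{2})y_iT_i\bigr)\star_b$. We claim that $N_b$ is nilpotent. We assign to each coordinate the weight $\deg$ of its class minus $2$, and to $q^\beta$ the weight $2c_1(TW)\cdot\beta\le0$; the latter is nonpositive by nefness of $K_W$. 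The divisor axiom and the dimension constraint (as in \eqref{eq:homogeneity}) show that every term contributing to $\gamma\star_b-$ is homogeneous. Each such term raises cohomological degree by $\deg\gamma$ plus a nonnegative amount that comes only from weights of $y$'s with $\deg T_i<2$ or from $c_1\cdot\beta$. Those $y$'s are $y_0$ (irrelevant) and odd degree-$1$ classes, which are nilpotent in the supercommutative algebra. On the Hodge-fixed locus only even Hodge classes occur, so the insertions of degree $\ge2$ and the divisor terms with $c_1\cdot\beta\le0$ do not decrease the filtration by cohomological degree. The one case needing care is $c_1\cdot\beta=0$ with $\deg T_i=2$: these terms preserve degree. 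Here we argue instead via the $\Bbbk$-valuation: on the polydisk $|y_i|<1$ and $|q^\beta|<1$ for $\beta\neq0$, so the degree-preserving part is topologically nilpotent. The classical part of $N_b$ raises degree strictly. This step is the main obstacle. If the valuation argument fails, we would fall back on \cite[Claim~6.15]{KKPY}, which we expect already gives regular singularity after twisting, and use it only for Step 3. In any case the spectrum of $K_b^{\mathrm F}$ is $\{-y_0(b)\}$, and Lemma~\ref{lem:reduced-spectrum} together with Corollary~\ref{cor:leading-exponential} (if the connection is of exponential type) gives the single exponential parameter $y_0(b)$.

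\emph{Step 2: regular singularity.} Twisting by $\cE^{-y_0(b)/u}$ removes the scalar, and we are left with $\partial_u-N_b/u^2+G/u$ on $H_{\Hdg}(W)\otimes\Bbbk((u))$. We try the graded lattice $\cL:=\bigoplus_p u^{-p}\bigl(H^{2p}_{\Hdg}\otimes\Bbbk[[u]]\bigr)$, that is, the gauge $D=u^{\deg/2}$, which is the standard rescaling for grading-compatible connections. Under this gauge, a component of $N_b$ raising degree by $2k$ gets multiplied by $u^{k}$. Hence the strictly degree-raising components ($k\ge1$) become at worst $u^{-1}$, i.e.\ logarithmic. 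The leftover $k=0$ part should be handled as follows. If Step 1 shows it vanishes identically, there is nothing to do. Otherwise we use the $u$-adic Noetherian trick from Lemma~\ref{lem:descent}, taking $\mathcal L_\infty=\sum_j\Bbbk[[u]]D^j e_\ell$ and bounding its growth by nilpotency of $N_b$, to produce a logarithmic lattice.

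\emph{Step 3: exponents.} On $\cL$ the residue equals $G-\tfrac12\deg+(\text{nilpotent, from the }k=1\text{ part of }N_b)$. Since $G|_{\rH^{2p}}=p-\tfrac d2$ and the gauge subtracts $p$, the residue is $-\tfrac d2\id+\text{nilpotent}$. Its eigenvalues are therefore all $-d/2\in\Q$, and the exponents are $\cl{d/2}$ with multiplicity $\dim_\Q H_{\Hdg}(W)$. The rank claim is immediate, and rationality of the residues on the full cohomology follows in the same way.
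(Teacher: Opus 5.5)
Your overall route is the paper's. You remove $y_0$ with the string equation and use the dimension axiom to see that $N_b=-(K_b^{\mathrm F}+y_0(b)\id)$ raises degree. You then gauge by $u^{\Gamma}$ (your lattice $\bigoplus_pu^{-p}H^{2p}_{\Hdg}\otimes\Bbbk[[u]]$ is exactly the paper's $u^{-\Gamma}H_{\Hdg}(W)\otimes\Bbbk[[u]]$) and read off the residue $-\tfrac d2\id+(\text{nilpotent})$. However, Step~1, which you call the main obstacle, is misdiagnosed, and everything rests on it: Step~2 needs the degree-preserving ($k=0$) part of $N_b$ to vanish identically. The terms you single out, with $c_1(TW)\cdot\beta=0$ and degree-two insertions, do not preserve degree. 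After setting $y_0=0$, every term $\gamma$ of $\Eu_b$ has $\deg\gamma\geq2$. On $B_W^{\Hod}$ the odd coordinates vanish. Degree-two rational Hodge classes span $\NS(W)\otimes\Q$ by Lefschetz $(1,1)$, so they are Novikov directions, not additive insertions. Hence every insertion from $\tau$ has degree $2p_\nu\geq4$. The only insertions that could lower degree are those with $\deg T_i<2$, and both kinds ($y_0$ and odd classes) are absent. The dimension axiom then gives output degree $h=a+\deg\gamma+\sum_\nu(2p_\nu-2)-2c_1(TW)\cdot\beta\geq a+2$, with no exceptional case. Even degree-two insertions would contribute only $0$, leaving a raise of at least $\deg\gamma\geq2$. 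This is precisely the observation in the paper's proof, and it settles Step~1 without any valuation argument.

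The remedies you propose for the supposed exceptional case would not work, so as written the proposal does not prove the statement. First, topological nilpotence of a degree-preserving block only makes its eigenvalues small, not zero, so it would not give $\spec(K_b^{\mathrm F})=\{-y_0(b)\}$. Second, the Noetherian argument of Lemma~\ref{lem:descent} bounds $D^je_\ell$ only by using a logarithmic lattice that already exists after extension of scalars; a nilpotent leading term gives no such bound. For example, $u^2y'=-\bigl(\begin{smallmatrix}0&1\\u&0\end{smallmatrix}\bigr)y$ has nilpotent leading term, yet eliminating $y_2$ gives $(\delta^2+\delta-u^{-1})y_1=0$. This equation has solutions of the form $e^{\pm2u^{-1/2}}(\cdots)$ and is irregular by Fuchs' criterion (cf.\ Lemma~\ref{lem:scalar-fuchs}). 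Third, citing regular singularity from \cite[Claim~6.15]{KKPY} would not rescue Step~3: the residue computation needs the specific lattice $u^{-\Gamma}H_{\Hdg}(W)\otimes\Bbbk[[u]]$ to be logarithmic, which is again the strict-raising property. Once strict raising is established as above, your Steps~2 and~3 coincide with the paper's argument.
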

 \begin{proof}
  Let $b'$ be obtained from $b$ by setting $y_0=0$.
  The string equation gives $\star_b=\star_{b'}$; since
  $\partial_{y_0}\Eu=\mathbf1$, it also gives
  $K_b^{\mathrm F}=K_{b'}^{\mathrm F}-y_0(b)\id$. Thus, after tensoring by
  $\cE^{-y_0(b)/u}$, we may assume $y_0(b)=0$.
  Consider a nonzero contribution to the product of
  classes of degrees $2p$ and $a$, with curve class
  $\beta$ and additional insertions of degrees
  $2p_1,\ldots,2p_k$.
  If the output has degree $h$, its Poincar\'e-dual test class has degree $2d-h$. The dimension axiom
  therefore gives
  \[
  2p+a+(2d-h)+2\sum_{\nu=1}^k p_\nu
  =2\bigl(d+c_1(TW)\cdot\beta+k\bigr).
  \]
  Solving for $h$ yields
  \begin{equation}\label{eq:degree-nef}
   h=a+2p+2\sum_{\nu=1}^k(p_\nu-1)
   -2c_1(TW)\cdot\beta.
  \end{equation}
  Because $b\in B_W^{\Hod}$, all remaining additive insertions
  are rational Hodge classes. The unit coordinate has been set
  to zero, and all degree-two Hodge directions belong to the
  N\'eron--Severi space and have been absorbed into the
  Novikov variables. Thus $p_\nu\geq2$; every term of $\Eu$ has
  $p\geq1$ since $y_0=0$; and $c_1(TW)\cdot\beta\leq0$. Hence $K^{\mathrm F}=-\Eu\star$
  strictly raises degree, and we write $K^{\mathrm F}=\sum_{j\geq1}K_j^{\mathrm F}$ with $K_j^{\mathrm F}$ raising
  degree by $2j$.
  
  On $\rH^{2p+\epsilon}(W,\Bbbk)$, with
  $\epsilon\in\{0,1\}$, define $\Gamma:=p\,\id$.
  On the parity-$\epsilon$ part,
  \[
  G=\Gamma+\frac{\epsilon-d}{2}\id.
  \]
  Since $K_j^{\mathrm F}$ raises degree by $2j$,
  \[
  u^\Gamma K_j^{\mathrm F} u^{-\Gamma}=u^jK_j^{\mathrm F}.
  \]
  Moreover, applying both sides to a section gives
  \[
  u^\Gamma\partial_u u^{-\Gamma}
  =\partial_u-\frac{\Gamma}{u}.
  \]
  Consequently,
  \begin{equation}\label{eq:nef-gauge}
   u^\Gamma\nabla_{\partial_u}u^{-\Gamma}
   =
   \partial_u+\sum_{j\geq1}u^{j-2}K_j^{\mathrm F}
   +\frac{\epsilon-d}{2u}\id.
  \end{equation}
  Only the term $j=1$ contributes to the residue.
  Thus, the transformed connection is logarithmic,
  with residue:
  \[
  K_1^{\mathrm F}+\frac{\epsilon-d}{2}\id.
  \]
  The operator $K_1^{\mathrm F}$ raises degree by two, so
  $(K_1^{\mathrm F})^{d+1}=0$. Every residue eigenvalue on the
  parity-$\epsilon$ part is therefore
  $(\epsilon-d)/2$, and our convention for exponents
  gives the class $\cl{(d-\epsilon)/2}$.
  
  Equivalently,
  $u^{-\Gamma}\rH^*(W,\Bbbk)[[u]]$
  is a logarithmic lattice for the connection after
  the scalar exponential twist.
  Returning to the original point $b$, put
  \[
  \cR_{\Hdg}:=
  \cE^{-y_0(b)/u}\otimes
  {\widehat \cH}_{\Hdg,b}^{\mathrm{mer}}.
  \]
  The gauge preserves the Hodge subspace, which is
  purely even. Hence $\cR_{\Hdg}$ has rank
  $\dim_\Q H_{\Hdg}(W)$ and all its exponents are
  $\cl{d/2}$, as asserted.
  The full connection is
  $\cE^{y_0(b)/u}$ tensored with a nonzero regular-singular connection, so its only exponential parameter is $y_0(b)$.
 \end{proof}
 
 \subsection{Local persistence of formal unramified exponential type}
 \label{sec:nef-deformation}
 
 Let $(\cH,\nabla)/B_X$ be the A-model F-bundle of a connected
 smooth complex projective variety $X$. The next lemma concerns
 cyclicity of the entire Hodge-invariant tangent algebra; in
 Proposition~\ref{prop:cyclic-transport} we apply the same leaf
 argument to a single maximal spectral factor.
 
 \begin{lemma}\label{lem:cyclic}
  Put $N:=\dim_\Q H_{\Hdg}(X)$, and let
  $b\in B_X^{\Hod}(\Bbbk)$. Assume that
  \[
  \mathbf1,\Eu_b,\Eu_b^{\star2},\ldots,\Eu_b^{\star(N-1)}
  \]
  form a basis of $T_bB_X^{\Hod}$.
  Equivalently, the minimal polynomial of
  $K_b^{\mathrm F}|_{H_{\Hdg}(X)\otimes_\Q\Bbbk}$ has degree $N$.
  Then there is an admissible neighborhood $U\subset B_X^{\Hod}$
  of $b$ such that:
  \begin{enumerate}
   \item On $\cH_{\Hdg}|_{U\times\{0\}}$, the number of distinct
   eigenvalues of $K^{\mathrm F}$, their algebraic multiplicities, and their Jordan block sizes are constant.
   The number of distinct eigenvalues on full cohomology is also
   constant on $U$.
   \item If ${\widehat \cH}_b^{\mathrm{mer}}$ is of unramified exponential
   type, then so is ${\widehat \cH}_{b'}^{\mathrm{mer}}$ for every
   $b'\in U(\Bbbk)$. After matching the eigenvalues by the local
   spectral decomposition, the corresponding regular-singular
   factors are isomorphic over $\Bbbk((u))$.
  \end{enumerate}
 \end{lemma}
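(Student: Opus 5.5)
Under the cyclicity hypothesis, the Hodge-invariant tangent algebra at $b$ is the monogenic algebra $\Bbbk[z]/(p_b(z))$, where $p_b$ is the characteristic polynomial of $K_b^{\mathrm F}|_{H_{\Hdg}(X)\otimes\Bbbk}$. The plan is to propagate this structure to a neighborhood and then use flatness of the connection along $B_X^{\Hod}$.

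\textbf{Step 1: part (i).} By \eqref{eq:F-Euler-powers}, the vectors $\mathbf1,\Eu_b,\ldots,\Eu_b^{\star(N-1)}$ correspond under maximality to $(K_b^{\mathrm F})^j\mathbf 1$, $0\le j<N$. Linear independence is an open analytic condition (a nonvanishing determinant). So on an admissible neighborhood $U$ the cyclic vector $\mathbf1$ persists, and $K^{\mathrm F}$ stays regular, i.e.\ each eigenvalue has exactly one Jordan block.

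It remains to see that eigenvalue multiplicities cannot split. Here I would use the structure of $\cH_{\Hdg}|_{U\times\{0\}}$ as the regular representation of the tangent algebra. In the cyclic case, $\Eu^{\mathrm F}$ generates this algebra, so $K^{\mathrm F}$ determines the whole product. Theorem~\ref{thm:hyzz} applied at $b$ splits $U\cong\prod_\lambda U_\lambda$ with factor F-bundles of rank $\mult_{\mathrm{alg}}(\lambda)$, and on each factor $K^{\mathrm F}$ has a single eigenvalue near $\lambda$ and a single Jordan block.

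The subtle point is that a single Jordan block on a factor of rank $m_\lambda$ forces the factor's tangent algebra to be local of length $m_\lambda$. Its spectral cover over $U_\lambda$ is then a single non-reduced point in each fiber, so the eigenvalue cannot split. Concretely, the coefficients of the characteristic polynomial on $U_\lambda$ define analytic functions. The locus where the factor's operator has a single eigenvalue is closed. The locus where it is cyclic is open. Along a leaf through the Euler direction, flatness $[K^{\mathrm F},\mu(\xi)]=0$ ties the eigenvalue to a single analytic function, namely the idempotent-free coordinate. Granting this, the multiplicity pattern and block sizes are locally constant on a small enough $U$.

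For the number of distinct eigenvalues on full cohomology, Lemma~\ref{lem:reduced-spectrum} identifies it with the number on $H_{\Hdg}$ at every rational point of $U$, which is constant.

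\textbf{Step 2: part (ii).} Restrict $\widehat\cH^{\mathrm{mer}}$ to $U$. It carries a flat connection along $U$ compatible with $\nabla_{u^2\partial_u}$. Using the external-sum decomposition of Theorem~\ref{thm:hyzz}, each summand $\cH_\lambda$ over $U_\lambda$ is handled separately. For $b',b\in U(\Bbbk)$, I would transport formal fibers by solving the flat connection along $U$ in the $u$-adic topology. Because $\nabla_{u\xi}$ is regular, the horizontal-transport equation $\nabla_\xi s=0$ is singular in $u$, so it cannot simply be solved order by order.

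Instead, I would use the formal-persistence result (Theorem~\ref{cl:formal-persistence}, \cite[Rem.~3.14]{KKPY}): on a maximal F-bundle whose leading endomorphism is cyclic with constant Jordan type, the formal isomorphism class of the disk connection, after removing the exponential factor $\cE^{-\lambda(b')/u}$, is constant. Then $\cE^{\lambda/u}\otimes\cM_{\lambda,b'}\cong\cE^{\lambda/u}\otimes\cM_{\lambda,b}$ over $\Bbbk((u))$. Regular singularity of one fiber therefore gives it for all, and Corollary~\ref{cor:leading-exponential} matches the eigenvalues.

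\textbf{Main obstacle.} The hardest step is the isomonodromy-type transport in Step 2. It rests on the cited formal-persistence theorem. If that theorem cannot be invoked directly, I would first try a gauge transformation along a path in $U$ given by the holomorphic family of cyclic-basis frames $(K^{\mathrm F})^j\mathbf1$. In that frame the connection becomes a companion-matrix system, and I would check that the resulting Birkhoff normal form is independent of the base point.
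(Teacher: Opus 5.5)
Your Step~1 has a genuine gap exactly where you write ``Granting this''. Openness of the cyclic determinant does give one Jordan block per eigenvalue near $b$. But nothing you state prevents a multiple eigenvalue from splitting. The claim that a factor's spectral cover is ``a single non-reduced point in each fiber'' is the conclusion, not an argument. The relation $[K^{\mathrm F},\mu(\xi)]=0$ is only commutativity of the tangent algebra, and that cannot rule out splitting: multiplication by $z$ on $\Bbbk[z]/(z^2-t)$ is cyclic for every $t$, yet its eigenvalues $\pm\sqrt t$ coalesce at $t=0$. Your mention of a leaf through the Euler direction points the right way, but the missing idea is to read the hypothesis geometrically. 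Since $\mu(X_{z^j})=(K^{\mathrm F})^j$, cyclicity says precisely that $X_1,X_z,\ldots,X_{z^{N-1}}$ span $T_bB_X^{\Hod}$. Hence $\mathcal D_b$ is the whole tangent space, and the Euler leaf $W$ of Theorem~\ref{cl:formal-persistence}(i) is an open neighborhood of $b$. Part (ii) of that theorem then gives constancy of the multiplicities. What makes it work is the identity $\mathrm{Lie}_{X_p}L=-p(L)$ for $L=\Eu^{\mathrm F}\circ(-)$, which comes from Lemma~\ref{lem:key} and uses flatness mixing the base and $u$-directions. Along each flow the spectrum therefore moves only by an injective scalar map $\varphi_s$. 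Block sizes then follow from persistence of cyclicity, and Lemma~\ref{lem:reduced-spectrum} handles full cohomology, as you say. No splitting via Theorem~\ref{thm:hyzz} is needed.

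In Step~2 you misstate Theorem~\ref{cl:formal-persistence}: it has no cyclicity hypothesis and yields constancy only along $W$. The content of the lemma is that cyclicity makes $W$ open, which is the same observation as above. More seriously, (ii) concerns $\widehat\cH^{\mathrm{mer}}$ of the full A-model bundle, with fiber all of $\rH^*(X,\Bbbk)$ including odd and non-Hodge classes, restricted to $B_X^{\Hod}$. This bundle is not maximal over that base, so neither Theorem~\ref{thm:hyzz} nor Theorem~\ref{cl:formal-persistence} applies to it as you use them; your argument only covers $\cH_{\Hdg}$. The paper closes this with Remark~\ref{rem:compatible-persistence}. For $X_p=p_\circ(\Eu^{\mathrm F})$, associativity of the quantum product gives $\nabla_{uX_p}|_{u=0}=p_\star(-\Eu)\star(-)=p(K^{\mathrm F})$ on the full fiber. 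Hence the transport operators $\widehat\nabla_p$ of Lemma~\ref{lem:app-transport-operators} preserve the lattices there, and the same parallel transport and constant-residue computation preserve every regular-singular factor. Your fallback via companion-matrix frames and Birkhoff normal forms is too undeveloped to replace either step.
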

 \begin{proof}
  In a unital algebra, a polynomial in multiplication by an
  element vanishes when its value on the unit vanishes.
  Since $K_b^{\mathrm F}$ is multiplication by $-\Eu_b$, this proves
  the stated equivalence. The cyclic determinant remains nonzero in a neighborhood of $b$, so these powers give a local frame.
  By \eqref{eq:sign}, the powers of $\Eu^{\mathrm F}=-\Eu$ give
  the same frame. The Euler leaf of the maximal Hodge-invariant
  F-bundle in Theorem~\ref{cl:formal-persistence} is consequently
  a neighborhood of $b$ in $B_X^{\Hod}$.
  Theorem~\ref{cl:formal-persistence} gives local constancy of the spectral multiplicities.
  Cyclicity gives one Jordan block for each eigenvalue, whose size
  is its multiplicity. Lemma~\ref{lem:reduced-spectrum} provides
  the assertion about the full spectrum.
  
  For the formal-type assertion on full cohomology, use
  Remark~\ref{rem:compatible-persistence}. Indeed, for
  $X_p=p_\star(-\Eu)$, associativity gives on the full bundle
  \[
  \nabla_{uX_p}|_{u=0}
  =p_\star(-\Eu)\star(-)=p(K^{\mathrm F}).
  \]
  Thus, the compatibility required there holds along the Hodge leaf,
  and the same transport preserves all regular-singular factors.
 \end{proof}
 
 \section{Blowups, projective bundles, and an irrationality criterion}
 \label{sec:criterion}
 
 The construction of Hodge atoms in \cite[\S5]{KKPY} motivates the study
 of formal connection invariants under the comparisons associated with
 blowups and projective bundles. In this section, we consider generic
 spectral factors of the Hodge-invariant A-model F-bundle whose leading
 endomorphism is cyclic. We prove that unramified exponential type with
 rational residues propagates along the cyclic locus of a connected
 generic spectral component, and that the exponent multiset of the
 regularized factor is constant there. We then transport these
 invariants through the comparison domains and apply weak factorization
 to obtain an irrationality criterion in dimensions two, three, and four.
 Gong and Xu \cite[\S3]{gong2026irrationalitystabilizationcomplexsmooth}
 study related questions using Gu\'er\'e's methods
 \cite{guere2026irrationalitycubicfourfolds}.

 \ 
 
 Let $W$ be a connected smooth complex projective variety. Denote by
 $\mathcal F_W$ the maximal Hodge-invariant A-model F-bundle over the
 purely even base $B_W^{\Hod}$. Its fiber at $u=0$ is
 \[
 \mathcal A_W
 :=H_{\Hdg}(W)\otimes_{\Q}\cO_{B_W^{\Hod}},
 \]
 equipped with the quantum product. Let $U_W^{\mathrm{gen}}\subset B_W^{\Hod}$ be the open locus where
 $K^{\mathrm F}$ has the maximal number of distinct eigenvalues, and
 let
 \[
 \pi_W\colon\widetilde U_W^{\mathrm{gen}}
 \longrightarrow U_W^{\mathrm{gen}}
 \]
 be the reduction, over $U_W^{\mathrm{gen}}$, of the finite analytic space defined by \[   \det(z\id-K^{\mathrm F})=0 \] in $U_W^{\mathrm{gen}}\times\mathbb A^{1,\mathrm{an}}_\Bbbk$. This cover is finite and \'etale; its
 geometric fibers consist of the distinct eigenvalues, without their
 multiplicities. By Lemma~\ref{lem:reduced-spectrum}, the full cohomology
 bundle has the same reduced spectral cover.
 
 Fix a connected component $S$ of
 $\widetilde U_W^{\mathrm{gen}}$, and let $\lambda\in\cO(S)$ be the
 tautological eigenvalue. All bundles and endomorphisms below are
 implicitly pulled back to $S$. The generalized $\lambda$-eigenspace
 bundle has a constant rank $m$. For $s\in S(\Bbbk)$, write
 $\mathcal M_s$ for the corresponding formal meromorphic summand of
 $\mathcal F_W$ at $\pi_W(s)$, and call it the \emph{generic Hodge factor}
 represented by $s$. Its rank is $m$, independently of the degree of
 $S\to U_W^{\mathrm{gen}}$.
 
 Whenever $\mathcal M_s$ is of unramified exponential type, its only
 exponential parameter is $-\lambda(s)$. We write
 \[
 \mathcal R_s
 :=\mathcal M_s\otimes\mathcal E^{\lambda(s)/u},
 \qquad
 F_s:=\operatorname{Exp}(\mathcal R_s).
 \]
 Thus, $\mathcal R_s$ is regular singular, and $F_s$ is the multiset of
 negative residue eigenvalues modulo $\Z$, counted with multiplicity.
 
 The space $S$ is smooth, connected, and $\Bbbk$-analytic.
 We shall use the identity principle in the following form: a
 meromorphic function on $S$ that vanishes on a nonempty analytic open
 subset is identically zero. In particular, the zero locus of a
 nonzero analytic section of a line bundle has empty interior.
 Every nonempty analytic open subset of $S$ contains a
 $\Bbbk$-rational point.
 
 Let $P_0$ be the spectral projector onto the generalized
 $\lambda$-eigenspace, and put
 \[
 \mathcal A:=\operatorname{im}P_0,\qquad
 N:=(K^{\mathrm F}-\lambda\,\id)|_{\mathcal A},\qquad
 v_0:=P_0(\mathbf1).
 \]
 The bundle $\mathcal A$ is a summand algebra with unit $v_0$.
 The endomorphism $N$ is multiplication by
 $n:=Nv_0\in\mathcal A$, and $N^m=0$.
 
 Define the \emph{cyclic locus} by
 \begin{equation}\label{eq:cyclic-locus}
  S^{\mathrm{cyc}}
  :=\left\{s\in S:
  v_0,Nv_0,\ldots,N^{m-1}v_0
  \text{ are linearly independent over }\mathcal H(s)\right\},
 \end{equation}
 where $\mathcal H(s)$ denotes the completed residue field of $s$; notice that this is the nonvanishing locus of the analytic section
 \[
 v_0\wedge Nv_0\wedge\cdots\wedge N^{m-1}v_0
 \in\Gamma(S,\det\mathcal A).
 \]
 Consequently, if $S^{\mathrm{cyc}}$ is nonempty, it is open and dense
 in $S$. 
 
 At a geometric point, the displayed vectors are
 $1,n,\ldots,n^{m-1}$ in the summand algebra. A polynomial relation
 $q(n)=0$ implies $q(N)=0$, since $q(N)$ is multiplication by $q(n)$;
 conversely, $q(N)=0$ implies $q(n)=q(N)v_0=0$. Hence, these vectors
 are independent when $N$ has a single Jordan block.
 
 Write $\delta:=u\partial_u$. We shall use the following scalar
 regular-singularity criterion.
 
 \begin{lemma}[{\cite[Prop.~3.16 and Rem.~3.20(1)]{PutSinger}}]\label{lem:scalar-fuchs}
  Let $\cM$ be a differential module of rank $m$ over $\Bbbk((u))$,
  let $D=\nabla_\delta$, and let $v$ be a cyclic vector, meaning that
  $v,Dv,\ldots,D^{m-1}v$ form a basis over $\Bbbk((u))$, where $m$ is the rank. Write
  \[
  D^mv+\sum_{i=0}^{m-1}a_iD^iv=0,
  \qquad a_i\in\Bbbk((u)).
  \]
  Then $\cM$ is regular singular if and only if
  $a_i\in\Bbbk[[u]]$ for every $i$. In this case,
  \[
  \cL_v:=\sum_{i=0}^{m-1}\Bbbk[[u]]D^iv
  \]
  is a logarithmic lattice, and the characteristic polynomial of its
  residue is
  \[
  T^m+\sum_{i=0}^{m-1}a_i(0)T^i.
  \]
 \end{lemma}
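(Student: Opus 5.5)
The plan is to prove the two implications separately and to read off the residue from a companion matrix. The implication ``$a_i\in\Bbbk[[u]]\Rightarrow$ regular singular'' is elementary. The converse carries the real content; I would get it from the Newton polygon of the associated scalar operator.

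\emph{Sufficiency and the residue.} Assume every $a_i\in\Bbbk[[u]]$. Since $v,Dv,\ldots,D^{m-1}v$ is a $\Bbbk((u))$-basis, $\cL_v$ is a free $\Bbbk[[u]]$-module of rank $m$ that spans $\cM$, so it is a lattice. It is $D$-stable:
\begin{itemize}
\item for $i<m-1$ we have $D(D^iv)=D^{i+1}v\in\cL_v$;
\item for the top vector, $D(D^{m-1}v)=D^mv=-\sum_i a_iD^iv\in\cL_v$, because the $a_i$ are integral.
\end{itemize}
By the Leibniz rule $D(fw)=\delta(f)w+fDw$ and the fact that $\delta$ preserves $\Bbbk[[u]]$, this gives $D\cL_v\subseteq\cL_v$, so $\cL_v$ is logarithmic. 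In the basis $D^iv$, the matrix of $D$ is the companion matrix of $T^m+\sum a_iT^i$. Reducing modulo $u$ gives the companion matrix of $T^m+\sum a_i(0)T^i$, whose characteristic polynomial is exactly the stated one.

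\emph{Necessity.} Assume $\cM$ is regular singular, and fix a logarithmic lattice $\cL$.
\begin{enumerate}
\item \emph{A $D$-stable lattice containing $\cL_v$.} Choose $a$ with $v\in u^{-a}\cL$. The lattice $u^{-a}\cL$ is again $D$-stable, so every $D^jv$ lies in $u^{-a}\cL$. Exactly as in the proof of Lemma~\ref{lem:descent}, the module $\cL':=\sum_{j\geq0}\Bbbk[[u]]D^jv$ is then a finitely generated, $D$-stable lattice containing $\cL_v$.
\item \emph{Passing to the scalar operator.} We have $\cM\cong\Bbbk((u))[\delta]/\Bbbk((u))[\delta]P$ with $P=\delta^m+\sum a_i\delta^i$, via $1\mapsto v$.
\item \emph{Contradiction via the Newton polygon.} Suppose some $a_i\notin\Bbbk[[u]]$. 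Then the Newton polygon of $P$, with respect to $\delta$ and the $u$-adic valuation, has a side of positive slope. The Newton polygon factorization lemma (Malgrange/Robba, \cite[\S3.3]{PutSinger}) factors $P=P_1P_2$ over $\Bbbk((u))$, with the two factors carrying the zero-slope part and the positive-slope part respectively. This produces a nonzero subquotient of $\cM$ of the form $\Bbbk((u))[\delta]/(P_j)$ whose Newton polygon has only positive slopes. Such a module admits no $\delta$-stable lattice: the leading coefficient would force unbounded pole orders on iterates. But Lemma~\ref{lem:lattice-independence}(ii) says that submodules of a regular-singular module are regular singular, and passing to duals gives the same for quotients. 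This is a contradiction, so all $a_i\in\Bbbk[[u]]$.
\end{enumerate}

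\emph{Main obstacle.} The hard step is step 3: making the Newton polygon factorization and the ``positive slope implies irregular'' claim precise over the complete algebraically closed field $\Bbbk$. If I want to avoid importing that machinery, I would first try a direct argument. Take $c:=-\min_i v(a_i)>0$ and a suitable index $i_0$ attaining this minimum with the correct edge behaviour. Then track the $u$-adic valuation of the coefficient of $D^{i_0}v$ in $D^jv$, expressed in the basis $D^iv$, and show that it tends to $-\infty$ as $j$ grows. This contradicts $D^jv\in u^{-a}\cL$ from step 1, because that inclusion bounds the poles uniformly once $\cL$ and $\cL_v$ are compared inside $\cM$.
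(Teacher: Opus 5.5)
Your sufficiency direction and the companion-matrix computation of the residue are correct, and so is Step 1 of the necessity direction. The paper gives no proof of its own and only cites van der Put--Singer. The genuine gap is Step 3.

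After factoring $P=P_1P_2$, you still need that a module $\Bbbk((u))[\delta]/(Q)$ with non-integral coefficients (only positive slopes) admits no $\delta$-stable lattice. That is exactly the statement you are proving, specialized to the cyclic vector $1$ of the factor, so the reduction is circular. The sentence ``the leading coefficient would force unbounded pole orders on iterates'' is the whole content and is left unjustified. Controlling cancellations among coefficients is precisely what needs an argument, and cheap shortcuts fail. For example, for $P=\delta^2+u^{-1}$ the trace of the companion matrix is $0$, so the determinant module is trivial and irregularity is invisible on $\det\cM$. Also, the characterization ``regular singular iff only slope $0$'' is usually deduced from this very criterion or from the full formal classification, so citing the Newton-polygon section does not give an independent proof. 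Your valuation-tracking fallback is unspecified and would amount to redoing the Newton polygon.

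The missing idea is short, and it finishes the argument from your own Step 1 with no Newton polygon. Let $\cL'=\sum_{j\geq0}\Bbbk[[u]]D^jv$, which you showed is a finitely generated $D$-stable lattice. Then $D$ induces the residue $R$ on the $m$-dimensional space $\cL'/u\cL'$, and $R$ sends the class of $D^jv$ to the class of $D^{j+1}v$. These classes span $\cL'/u\cL'$ and are the iterates $R^j\bar v$ of one vector. By Cayley--Hamilton, the cyclic subspace generated by $\bar v$ is already spanned by $\bar v,R\bar v,\ldots,R^{m-1}\bar v$. Hence these $m$ classes form a basis, and Nakayama's lemma gives $\cL'=\cL_v$. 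In particular $D^mv\in\cL_v$. Since $v,\ldots,D^{m-1}v$ is a $\Bbbk((u))$-basis, coordinates are unique, which forces every $a_i\in\Bbbk[[u]]$.
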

 
 \begin{proposition}\label{prop:cyclic-transport}
  Suppose that $b\in S^{\mathrm{cyc}}(\Bbbk)$ and that $\mathcal M_b$
  is of unramified exponential type with rational residues.
  Then, for every $s\in S^{\mathrm{cyc}}(\Bbbk)$, the regularization
  $\mathcal R_s$ is regular singular and
  \[
  F_s=F_b\subset\Q/\Z.
  \]
  In particular, every nonempty analytic open subset $O\subseteq S$
  contains a point $s\in S^{\mathrm{cyc}}(\Bbbk)$ with these properties.
  Equivalently, if an analytic open subset
  $O_0\subseteq U_W^{\mathrm{gen}}$ has nonempty inverse image in $S$,
  then it contains the image of such a point.
 \end{proposition}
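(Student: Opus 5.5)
On $S^{\mathrm{cyc}}$ I will use the cyclic vector $v_0$ and reduce everything to the scalar criterion, Lemma~\ref{lem:scalar-fuchs}. Near any point of $S^{\mathrm{cyc}}$, Theorem~\ref{thm:hyzz} realizes the $\lambda$-summand as a maximal F-bundle $(\cH_\lambda,\nabla_\lambda)$ over a germ $B_\lambda$ whose fiber at $u=0$ is $\mathcal A$. Its F-bundle Euler field acts through $\lambda\,\id+N$, i.e. multiplication by $\lambda v_0+n$. Cyclicity at $b$ makes $v_0,n,\dots,n^{m-1}$ a frame of $T B_\lambda$ near $\pi_W(b)$. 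The Euler leaf of Theorem~\ref{cl:formal-persistence} is therefore a neighborhood, exactly as in Lemma~\ref{lem:cyclic} but applied to the single factor.

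\textbf{Step 1 (near $b$).} By Theorem~\ref{cl:formal-persistence} and Remark~\ref{rem:compatible-persistence}, the formal summands $\mathcal M_s$ for $s$ near $b$ in $S^{\mathrm{cyc}}$ are isomorphic to $\mathcal M_b$ after the exponential twist. Hence $\mathcal R_s$ is regular singular, $F_s=F_b$, and the residues stay rational. So the good locus $S^{\mathrm{good}}\subseteq S^{\mathrm{cyc}}$ of rational cyclic points with these properties and with $F_s=F_b$ is open.

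\textbf{Step 2 (analytic description of the good locus).} Let $D=\nabla_\delta$ act on the family of regularized summands $\mathcal R$ over $S$. Define $\mathcal R$ by the lattice from Lemma~\ref{lem:splitting}, which depends analytically on $s$ in each $u$-adic order, twisted by $\cE^{\lambda/u}$. Take the family of vectors $w=v_0+O(u)$, the projection of $\mathbf 1$ into the lattice summand. On $S^{\mathrm{cyc}}$, $w,Dw,\dots,D^{m-1}w$ is a $\Bbbk((u))$-basis. To see this, conjugate by $u^{\Gamma}$-type gauges, or note that the $u^{-1}$-leading parts of $D^iw$ are $N^iv_0$ up to scalars. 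Write $D^mw=-\sum a_i(s,u)D^iw$. Each Laurent coefficient of $a_i$ is an analytic function on $S^{\mathrm{cyc}}$, obtained by Cramer's rule with the nonvanishing cyclic determinant, and the orders in $u$ are bounded uniformly. By Lemma~\ref{lem:scalar-fuchs}, regular singularity at $s$ is equivalent to the vanishing of the finitely many negative-index coefficients of the $a_i$, which are analytic functions on $S^{\mathrm{cyc}}$. These vanish on the open set from Step~1. The identity principle on the connected smooth space $S$ (using density of $S^{\mathrm{cyc}}$, and connectedness of $S^{\mathrm{cyc}}$ as the complement of an analytic hypersurface) makes them vanish everywhere. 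Hence $\mathcal R_s$ is regular singular for all $s\in S^{\mathrm{cyc}}(\Bbbk)$.

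The residue characteristic polynomial $T^m+\sum a_i(s,0)T^i$ then has analytic coefficients. On the open set of Step~1 its roots are rational and locally constant. Those coefficients are therefore constant there, and by the identity principle constant on all of $S^{\mathrm{cyc}}$. This gives $F_s=F_b\subset\Q/\Z$.

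The final assertions follow because $S^{\mathrm{cyc}}$ is nonempty (it contains $b$), hence open and dense, so it meets any nonempty open $O\subseteq S$ in a nonempty open set containing a rational point. The statement about $O_0$ then follows by applying this to $\pi_W^{-1}(O_0)\cap S$.

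\textbf{Main obstacle.} The hardest step is Step~2: making precise that the Laurent coefficients of the $a_i$ are analytic in $s$ with uniformly bounded pole order in $u$. This requires that the cyclic vector $w$ remains cyclic uniformly and that the formal splitting of Lemma~\ref{lem:splitting} varies analytically in families. I would first try to avoid the family splitting entirely. The plan is to work with the whole Hodge bundle, apply Lemma~\ref{lem:splitting} with coefficients in $\cO(S)$, whose uniqueness gives analytic dependence order by order, and bound the pole orders by the degree-$m$ recursion.
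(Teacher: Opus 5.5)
\paragraph{Overall assessment.} Your outline follows the paper's proof closely. The shared steps are: persistence near $b$ via the maximal factor of Theorem~\ref{thm:hyzz} and its Euler leaf; a relative version of Lemma~\ref{lem:splitting} obtained from uniqueness order by order; the projected unit $w$ fed into Lemma~\ref{lem:scalar-fuchs}; and the identity principle. You work with analytic functions on the connected set $S^{\mathrm{cyc}}$. The paper instead extends the Laurent coefficients meromorphically to $S$, with denominators powers of the cyclic determinant. Your variant is legitimate, but it rests on the true but nontrivial fact that removing a nowhere dense hypersurface from a connected smooth Berkovich space leaves it connected.

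\paragraph{The gap.} It lies in the sentence ``On the open set of Step~1 its roots are rational and locally constant,'' which is exactly where $F_s=F_b$ is produced. Step~1 only gives $\cR_s\cong\cR_b$, i.e.\ constancy of the residue eigenvalues \emph{modulo} $\Z$. The roots of $p_s(T)=T^m+\sum_ic_i(s)T^i$ are the residue eigenvalues of the specific lattice $\sum_i\Bbbk[[u]]D^iw(s)$, which moves with $s$. Nothing you say prevents these roots from jumping by integers from point to point. If they did, the $c_i$ would not be constant on any open set, and the identity principle would have nothing to propagate.

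\paragraph{The missing ingredient.} What is needed is the discreteness of $\Q$ inside $\Bbbk$. The $c_i$ are analytic, hence continuous. At rational points of the Step~1 neighborhood they are rational, being signed elementary symmetric functions of rational roots. Choose an affinoid neighborhood $V'$ of $b$ on which $|c_i(s)-c_i(b)|<1$. There the rational number $c_i(s)-c_i(b)$ must vanish, because the absolute value of $\Bbbk$ is trivial on $\Q$. Since $V'$ is reduced and its rigid points are $\Bbbk$-rational, the affinoid Nullstellensatz gives $c_i\equiv c_i(b)$ on $V'$. Only then does the identity principle yield constancy on all of $S^{\mathrm{cyc}}$.

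\paragraph{Smaller points.} The same Nullstellensatz step is tacit when you pass from pointwise vanishing of the negative Laurent coefficients of the $a_i$ at rational points to their vanishing as analytic functions.

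A correction also disposes of your ``main obstacle.'' The leading term of $D^iw$ sits in order $u^{-i}$, not $u^{-1}$, and it is exactly $N^iv_0$, not merely up to scalars. Indeed, $uD$ preserves the summand lattice $\cL$ and reduces to $N$ modulo $u$. Hence $u^{i+1}D^{i+1}w=(uD-iu)(u^iD^iw)$ shows $u^iD^iw\in\cL$ with reduction $N^iv_0$. On $S^{\mathrm{cyc}}$ these vectors form an $\cO(S^{\mathrm{cyc}})[[u]]$-basis of $\cL$. Expanding $u^mD^mw$ in this basis gives $a_i\in u^{i-m}\cO(S^{\mathrm{cyc}})[[u]]$ directly, which is the uniform pole bound you were after.
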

 \begin{proof}
  First, lift the decomposition
  $\mathcal A_W=\operatorname{im}P_0\oplus\ker P_0$ to the
  formal lattice over $S$. Locally, the successive off-diagonal
  terms of the disk connection are removed by gauges $I+u^jT_j$.
  At each order, the unknown off-diagonal matrix $T_j$ is determined
  by the commutator with $K^{\mathrm F}$, which is invertible on
  homomorphisms between the two blocks because their spectra are
  disjoint. Its inverse has analytic coefficients on $S$.
  This gives a decomposition over $\cO_S[[u]]$, unique with the
  prescribed reduction, so the local decompositions glue; this is
  the relative form of Lemma~\ref{lem:splitting}, obtained by the
  argument of \cite[Prop.~3.26]{HYZZ}.
  
  Let $P(u)$ be the projection onto the selected formal summand along
  its complement. Then $P(u)^2=P(u)$ and $P(0)=P_0$.
  After inverting $u$, both summands are preserved by
  $\nabla_\delta$. For $h=h_\lambda+h_{\mathrm{rest}}$, this gives
  \[
  \nabla_\delta(P h)=\nabla_\delta h_\lambda
  =P(\nabla_\delta h),
  \qquad [\nabla_\delta,P]=0.
  \]
  
  Let
  \[
  \widehat\cH:=H_{\Hdg}(W)\otimes_\Q\cO_S[[u]],
  \qquad
  \cL:=P(u)\widehat\cH,
  \qquad
  v:=P(u)\mathbf1.
  \]
  On $\cL[u^{-1}]$ consider
  \[
  D:=\nabla_\delta-\lambda u^{-1}\id.
  \]
  This is the operator of the regularized connection in the
  $u$-direction. Since
  $\nabla_\delta=\delta+K^{\mathrm F}/u+G$, the endomorphism induced by $uD$
  on $\cL/u\cL=\mathcal A$ is $N$.
  
  Put $w_j:=u^jD^jv$. The Leibniz rule gives
  $w_{j+1}=(uD-ju)w_j$. Starting from $w_0=v$, induction yields
  \begin{equation}\label{eq:scaled-cyclic-basis}
   w_j\in\cL,
   \qquad w_j\bmod u=N^jv_0.
  \end{equation}
  On $S^{\mathrm{cyc}}$, the reductions of
  $w_0,\ldots,w_{m-1}$ form a basis. Therefore, these sections form a
  basis of $\cL$ over $\cO_{S^{\mathrm{cyc}}}[[u]]$, and
  $v,Dv,\ldots,D^{m-1}v$ form a basis after inverting $u$.
  
  Write
  \[
  w_m=\sum_{i=0}^{m-1}h_i(s,u)w_i,
  \qquad h_i(s,u)\in\cO(S^{\mathrm{cyc}})[[u]].
  \]
  Dividing by $u^m$ gives
  \begin{equation}\label{eq:cyclic-relation}
   D^mv+\sum_{i=0}^{m-1}a_i(s,u)D^iv=0,
   \qquad a_i=-u^{i-m}h_i.
  \end{equation}
  In particular,
  \begin{equation}\label{eq:finite-fuchs-bounds}
   a_i(s,u)\in
   u^{i-m}\cO(S^{\mathrm{cyc}})[[u]].
  \end{equation}
  
  Every coefficient of every $a_i$ extends meromorphically to $S$.
  Indeed, in a local frame of $\cL$, Cramer's rule expresses each
  $h_i$ as a ratio with denominator
  $\Delta(u):=\det(w_0,\ldots,w_{m-1})$.
  Its constant term represents the cyclic-determinant section,
  which is nonzero at $b$ and hence not identically zero on the
  connected smooth space $S$.
  The formal inverse of $\Delta(u)$ has coefficients obtained by
  inverting powers of $\Delta(0)$, so these ratios are coefficientwise
  meromorphic. They are independent of the local frame and thus glue.
  
  We first establish regular singularity on a neighborhood of $b$.
  Apply Theorem \ref{thm:hyzz} to the maximal Hodge-invariant F-bundle near
  $\pi_W(b)$. After shrinking, its base is a product
  $\prod_\alpha B_\alpha$, and the F-bundle is the external sum of
  maximal F-bundles on these factors. The selected spectral summand
  is pulled back from a maximal F-bundle of rank $m$ on one factor
  $B_*$, with $\dim B_*=m$.
  
  At the corresponding point $b_*\in B_*$, the residual endomorphism
  is $K^{\mathrm F}|_{\mathcal A}=\lambda\,\id+N$. Its powers span
  the same algebra as the powers of $N$. Cyclicity, therefore, implies that the powers of the Euler vector field of this maximal factor span $T_{B_*,b_*}$. The Euler-generated leaf through $b_*$ is consequently a neighborhood
  of $b_*$ in $B_*$.
  
  The formal-persistence assertion of \cite[Rem.~3.14]{KKPY}, shown in Theorem~\ref{cl:formal-persistence},
  applied to this maximal factor, shows that its fibers remain of
  unramified exponential type near $b_*$ and that their regularized
  formal connections have constant isomorphism class. Pulling back
  along the product projection and choosing the local spectral
  branch through $b$ gives a nonempty analytic neighborhood
  $V\subseteq S^{\mathrm{cyc}}$ on which $\mathcal R_s$ is regular
  singular with exponent multiset $F_b$ for every
  $s\in V(\Bbbk)$.
  
  By Lemma~\ref{lem:scalar-fuchs}, every negative-power coefficient
  of every $a_i$ vanishes at all rational points of $V$.
  These coefficients are analytic on $V$, so the affinoid
  Nullstellensatz, applied locally, makes them zero analytic
  functions. There are only finitely many such coefficients by \eqref{eq:finite-fuchs-bounds}.
  Each extends meromorphically to $S$, and the identity principle
  therefore makes each identically zero. Thus
  \eqref{eq:cyclic-relation} defines a logarithmic lattice at every
  point of $S^{\mathrm{cyc}}(\Bbbk)$.
  
  Put
  \[
  c_i(s):=\text{coefficient of $u^0$ in the Laurent expansion of $a_i(s,u)$},
  \quad
  p_s(T):=T^m+\sum_{i=0}^{m-1}c_i(s)T^i.
  \]
  For $s\in V(\Bbbk)$, Lemma~\ref{lem:scalar-fuchs} identifies
  the roots of $p_s$, counted with multiplicity, with the residue
  eigenvalues of the cyclic logarithmic lattice. If $\alpha$ is
  such a root, then $-\alpha+\Z\in F_b\subset\Q/\Z$.
  Thus $\alpha=q+k$ for some $q\in\Q$ and $k\in\Z$, and in
  particular $\alpha\in\Q$. The coefficients $c_i(s)$, being
  signed elementary symmetric functions of these roots, also
  belong to $\Q$.
  
  Choose an affinoid neighborhood $V'\subseteq V$ of $b$
  on which
  \[
  |c_i(s)-c_i(b)|<1\qquad(0\leq i<m).
  \]
  At every rational point of $V'$, these differences are rational.
  Since the absolute value is trivial on $\Q$, they must vanish.
  The affinoid $V'$ is reduced, and all its rigid points are
  $\Bbbk$-rational. The affinoid Nullstellensatz gives
  $c_i=c_i(b)$ on $V'$. Since $V'$ contains a nonempty analytic
  open neighborhood of $b$, the meromorphic identity principle
  gives $c_i=c_i(b)\quad\text{on }S$. Consequently $p_s=p_b$ on $S^{\mathrm{cyc}}$, and the negative
  roots modulo $\Z$ give $F_s=F_b$.
  
  Finally, $S^{\mathrm{cyc}}$ is dense and open in $S$. Every
  nonempty analytic open subset of $S$ meets $S^{\mathrm{cyc}}$ in a nonempty analytic open subset, which
  contains a $\Bbbk$-rational point.
 \end{proof}
 
 \begin{lemma}[Compare with {\cite[\S3.2]{gong2026irrationalitystabilizationcomplexsmooth}}]\label{lem:separation}
  Let $Y$ be a connected smooth complex projective variety,
  let $Z\subset Y$ be a connected smooth closed subvariety of
  codimension $c\geq2$, and let $E$ be an algebraic vector
  bundle of rank $r\geq1$ on $Y$.
  Consider the comparisons
  \[
  \mathcal F_{\Bl_ZY}
  \simeq
  \mathcal F_Y\boxplus
  \underbrace{\mathcal F_Z\boxplus\cdots\boxplus\mathcal F_Z}_{c-1},
  \qquad
  \mathcal F_{\PP(E)}
  \simeq
  \underbrace{\mathcal F_Y\boxplus\cdots\boxplus\mathcal F_Y}_{r}
  \]
  on the analytic comparison domains of \cite[Thms.~4.5 and 4.11, \S\S5.2.4--5.2.5]{KKPY}.
  In (a) and (b), assume that the initial generic Hodge factor has a cyclic
  representative at a $\Bbbk$-rational point, of unramified exponential
  type with rational exponents.
  \begin{itemize}
   \item[(a)] A generic Hodge factor on the left can be
   represented in a comparison domain and transferred to a generic
   Hodge factor of one summand on the right.
   
   \item[(b)] A generic Hodge factor of any specified summand on
   the right can be represented in a
   comparison domain and transferred to a generic Hodge factor on
   the left.
   
   \item[(c)] In both cases the resulting factor has a cyclic representative
   at a $\Bbbk$-rational point, is of unramified exponential type with rational
   exponents, and has the same rank and regularized exponent multiset as the initial factor.
  \end{itemize}
 \end{lemma}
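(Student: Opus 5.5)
The plan is to reduce everything to two ingredients already available: the local product structure of Theorem~\ref{thm:hyzz} on the comparison domains, and the propagation statement Proposition~\ref{prop:cyclic-transport}. We treat the blowup and projective-bundle comparisons uniformly. Write the comparison as an isomorphism of maximal purely even F-bundles $\mathcal F_{\mathrm{left}}|_{\Omega}\simeq\boxplus_k\mathcal F_{W_k}|_{\Omega_k}$ over an analytic comparison domain $\Omega\cong\prod_k\Omega_k$. Here the $W_k$ are the copies of $Y$ and $Z$ (resp. of $Y$), and we restrict to the Hodge-fixed loci using the Hodge-group equivariance of \cite[\S\S5.2.4--5.2.5]{KKPY}. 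The residual endomorphism of an external sum is the block-diagonal sum of the residual endomorphisms of the factors. Hence at a point $(b_k)_k\in\Omega$ the spectrum on the left is the union of the spectra of the $K^{\mathrm F}_{b_k}$, with multiplicities added.

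First we choose points where these spectra are pairwise disjoint. The locus in $\prod_k\Omega_k$ where two factors share an eigenvalue is the zero locus of a product of resultants, and it is not identically zero. This follows because each $\Omega_k$ contains the $y_0$-direction, and by the string equation translating $y_0$ on one factor shifts its whole spectrum. So the disjointness locus is open dense. On it, the maximal number of distinct eigenvalues on the left is the sum of the maximal numbers on the right. Thus generic loci correspond: $U^{\mathrm{gen}}_{\mathrm{left}}\cap\Omega$ contains a dense open subset of $\prod_k U^{\mathrm{gen}}_{W_k}\cap\Omega_k$. The reduced spectral cover over it is the disjoint union of the pulled-back covers. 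Each connected component $S$ of a factor's cover then determines a component on the left. Disjointness of spectra together with the uniqueness in Lemma~\ref{lem:splitting} (applied to the natural lattice and the block-diagonal connection) shows that the formal summand of the left F-bundle at the eigenvalue $\lambda$ equals the pullback of the corresponding summand of $\mathcal F_{W_k}$. It therefore has the same rank. The nilpotent $N$ is the same on both sides, so it is cyclic at one exactly when it is cyclic at the other. Its regularization is literally the same differential module, so it is of unramified exponential type with rational residues on one side exactly when on the other, and $\Exp$ agrees.

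For (a) and (b) we must move the given representative into the comparison domain. Start with the given cyclic rational point $b$ of a component $S$, where the factor has unramified exponential type with rational exponents. The comparison domain is a nonempty analytic open subset of the base; for the product side it is a product of nonempty opens, one in each factor. Its preimage in $S$ is the preimage of a nonempty analytic open subset of $U^{\mathrm{gen}}$ intersected with the disjointness locus. By Proposition~\ref{prop:cyclic-transport}, in its last form, this open set contains the image of a cyclic rational point $s$ with $\mathcal R_s$ regular singular and $F_s=F_b$. At $s$, the factor is of unramified exponential type with rational residues, and the previous paragraph transfers it to a generic Hodge factor on the other side with identical rank, cyclicity and exponent multiset. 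This gives (c).

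The main obstacle is the nonemptiness of the relevant preimage in $S$. We need the comparison domain to meet the image of the chosen component $S$ while staying inside the generic, spectrally disjoint locus. For (b) this is fine because the domain is a product of opens. For (a), however, the component $S$ on the left might a priori not be visible in $\Omega$. I would handle this by applying the identity principle to the connected cover. Since $U^{\mathrm{gen}}_{\mathrm{left}}$ is connected and the cover is finite étale, every component surjects onto a union of components of $U^{\mathrm{gen}}$; with $B^{\Hod}$ connected and $U^{\mathrm{gen}}$ dense open, the image of $S$ meets every nonempty open set. Verifying this surjectivity carefully in the Berkovich setting is the step I expect to need the most care.
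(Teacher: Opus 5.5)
Your argument is correct and follows the paper's proof closely. The shared steps are:
\begin{itemize}
\item Hodge-equivariant restriction to purely even maximal F-bundles.
\item A resultant in the independent $y_0$-translations, giving disjoint spectra on a dense open set.
\item Identification of the reduced generic spectral covers on that set.
\item Proposition~\ref{prop:cyclic-transport}, to place a good cyclic rational representative in the common locus.
\item Preservation of eigenvalue, Jordan type and regularized factor under the comparison. The paper uses $\Psi(0)K^{\mathrm F}_{\mathrm{left}}=K^{\mathrm F}_{\mathrm{right}}\Psi(0)$ and a twist by $\cE^{\lambda/u}$; your uniqueness-of-splitting version is equivalent.
\end{itemize}

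The one divergence is the visibility step. The paper takes it from the component-surjectivity of \cite[(5.15) and \S5.2.5]{KKPY}, while you derive it yourself. Your derivation works, but it needs more than density. The required input is that $U^{\mathrm{gen}}$ is the nonvanishing locus of a single analytic function, namely a principal subresultant of $\det(z\,\id-K^{\mathrm F})$ and its $z$-derivative. This function is not identically zero on the connected smooth $B^{\Hod}$, so $U^{\mathrm{gen}}$ is connected by the Riemann-extension argument for normal non-Archimedean spaces; density of an open subset alone does not give connectedness. A component of the finite \'etale cover then has open and closed image, so it surjects onto $U^{\mathrm{gen}}$ and meets every nonempty open set.

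Part (b) needs this same input, contrary to your remark that the product shape makes it automatic. The product shape lets you choose the other factors freely. It does not ensure that the selected component of the specified summand $W_k$ lies over its factor $\Omega_k$, which is why the paper invokes component-surjectivity for each specified copy. Since your connectivity argument applies verbatim to $B^{\Hod}_{W_k}$, this is a misattribution rather than a gap.
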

 \begin{proof}
  Use the common algebraic image of the universal Hodge group
  on the cohomology representations occurring in the comparison.
  The comparison is equivariant with respect to its analytic group germ. The blowup and projective-bundle cohomology decompositions involve Tate twists, which restrict trivially to this group. Taking fixed bases and invariant subbundles, therefore, gives the displayed comparisons of purely even maximal F-bundles.
  
  The proof shall proceed in three steps. First, we describe how the comparison works on a common generic locus. Then, we transfer the selected generic Hodge factor, proving (a) and (b). Finally, we check that the comparison preserves cyclicity, rank, and the regularized exponent multiset.
  
  \begin{itemize}
   \item[(i)] We first describe a common generic locus in a comparison domain.
   Require each external summand to lie in its own generic spectral
   locus, and require the spectra of different summands to be
   disjoint. The first condition holds on a dense open subset:
   the comparison domain is open in the product of the bases, and
   each generic spectral locus is dense and open.
   
   For the second condition, the unit coordinate of the $j$th
   summand changes its operator by $K_j^{\mathrm F}\longmapsto K_j^{\mathrm F}-t_j\id$. The parameters $t_j$ vary independently in sufficiently small
   product neighborhoods. For each pair of summands, the resultant
   of their translated characteristic polynomials is a nonzero
   polynomial in the translation parameters. Its zero locus, therefore, has empty interior. Avoiding the finitely many such
   zero loci gives a dense open subset with pairwise disjoint spectra.
   
   Let $d_j $ be the maximal number of distinct eigenvalues for
   the $j$th summand. On the open subset just constructed, the
   external sum has $\sum_jd_j$ distinct eigenvalues.
   The identified open subset on the left meets its dense global
   generic locus. Hence, the maximal number on the left is also
   $\sum_jd_j$: it equals that number at a point of this
   intersection. Thus, the comparison identifies the reduced
   generic spectral covers on a common nonempty open domain.
   On the external-sum side, the cover is the disjoint union of
   the pulled-back covers of the individual summands.
   
   The component-surjectivity results of
   \cite[(5.15) and \S5.2.5]{KKPY} ensure that
   every selected global generic spectral component has a
   representative in the comparison domains. For a component on
   the external-sum side, the corresponding assertion applies to
   each specified copy of a summand. The restrictions just imposed
   remove only loci with empty interiors, so representatives remain
   on the common generic locus.
   
   \item[(ii)] We now transfer the selected generic Hodge factor. Suppose first that the initial factor is on the left.
   Its cyclic locus is nonempty by hypothesis; hence, it is dense and open.
   Proposition~\ref{prop:cyclic-transport} allows us to choose a
   rational representative on the common generic locus at which
   the factor has its original regularized exponent multiset.
   Since the external spectra are disjoint, the transferred
   factor belongs to one summand.
   
   For the converse, start with the selected component of the
   specified external summand. In the product comparison domain,
   the projection to that summand's base is open. The inverse
   image of its dense cyclic locus therefore meets the common
   generic locus in a nonempty open subset of the corresponding
   spectral component. Choose a rational point there and apply
   Proposition~\ref{prop:cyclic-transport} on the selected summand
   before transferring the factor to the left.
   
   \item[(iii)] Let $\Psi(u)$ be the comparison isomorphism.
   It fixes $u$, is regular and invertible at $u=0$, and intertwines
   the connections. Taking the leading coefficient gives $\Psi(0)K_{\mathrm{left}}^{\mathrm F}=K_{\mathrm{right}}^{\mathrm F}\Psi(0)$. Thus, the selected eigenvalue, rank, and leading Jordan form
   are preserved. Twisting both selected factors by
   $\mathcal E^{\lambda/u}$ gives isomorphic regular singular
   connections, so their exponent multisets agree.
   
   Finally, a single Jordan block on the target summand is cyclic
   with respect to that summand's unit by the regular-representation
   argument preceding \eqref{eq:cyclic-locus}. Hence, the transferred
   factor satisfies all the hypotheses of
   Proposition~\ref{prop:cyclic-transport}.
  \end{itemize}
 \end{proof}
 

 We now classify the possible rank and exponent data in
 dimensions at most two. 
 \begin{proposition}\label{prop:low-dimension}
  Let $W$ be a connected smooth complex projective variety of
  dimension at most two. Suppose that one of its generic Hodge
  factors is cyclic and of unramified exponential type with
  rational residues at a rational point. Its rank $m$ and
  regularized exponent multiset $F$ have one of the following types:
  \begin{enumerate}
   \item \emph{point type}:
   $m=1$ and $F=\ms{\cl0}$;
   \item \emph{curve type}:
   $m=2$ and $F=\ms{\cl{1/2},\cl{1/2}}$;
   \item \emph{surface type}:
   $m=3$ and $F=\ms{\cl0,\cl0,\cl0}$.
  \end{enumerate}
  
  A generic Hodge factor of any projective space that satisfies the same hypotheses is of point type.
 \end{proposition}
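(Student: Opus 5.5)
Since $\dim W\le2$, the plan is to reduce to a model point of each generic spectral component and read off the exponents there. By Proposition~\ref{prop:cyclic-transport}, the rank $m$ and the multiset $F$ are constant along the cyclic locus of the spectral component $S$. That locus is dense, so it suffices to compute at any convenient cyclic rational point. We split by dimension. If $\dim W=0$, then $H_{\Hdg}(W)$ is spanned by $\mathbf1$, $K^{\mathrm F}=-y_0\id$, and $\Grad=0$; hence $m=1$ and the residue is $0$, giving point type. If $\dim W=1$, then $H_{\Hdg}=\langle\mathbf1,\mathrm{pt}\rangle$. For genus $g\ge1$, Proposition~\ref{prop:nef} gives a single exponential parameter, so the only generic factor has $m=2$ and $F=\ms{\cl{1/2},\cl{1/2}}$. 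For $\PP^1$, Theorem~\ref{thm:fano} applies: $K^{\mathrm F}=-2(P\star)$ with $P^{\star2}=q$ has two simple eigenvalues, each factor has rank one, and a direct computation gives exponent $\cl0$.

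For surfaces with $K_W$ nef, Proposition~\ref{prop:nef} yields one factor of rank $\dim H_{\Hdg}(W)=\rho+2$ with all exponents $\cl{1}=\cl0$. Cyclicity, however, forces $\rho\le1$. On $B^{\Hod}_W$, the operator $K^{\mathrm F}$ after removing $y_0$ is nilpotent and strictly raises degree (proof of Proposition~\ref{prop:nef}). The degree filtration $\rH^0\subset\rH^{\le2}\subset\rH^{\le4}$ has three steps, so $N^3=0$. A single Jordan block therefore has size at most $3$, and $m=\rho+2\le3$. The case $m=3$ gives surface type. The case $m=2$ would require $\rho=0$, which is impossible for a projective surface. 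We should double-check this, but it seems $\rho\ge1$ always, so the only case is $\rho=1$, $m=3$. For surfaces with $K_W$ not nef, every such surface is birational to a minimal model obtained by contracting $(-1)$-curves, or is ruled or rational. Blowing up a point has codimension $c=2$, so Lemma~\ref{lem:separation} gives $\mathcal F_{\Bl}\simeq\mathcal F_Y\boxplus\mathcal F_{\mathrm{pt}}$. We would like to transfer factors from the left to a summand, which Lemma~\ref{lem:separation}(a) provides. By induction we reduce to minimal surfaces: those with $K$ nef (done above), $\PP^2$, and geometrically ruled surfaces $\PP(E)\to C$, which Lemma~\ref{lem:separation} reduces to two copies of $\mathcal F_C$ (curve case, done).

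The remaining step is to treat $\PP^n$, in particular $\PP^2$. Here Theorem~\ref{thm:fano} applies at small points, and $K^{\mathrm F}=-(n+1)P\star$ with $P^{\star(n+1)}=q$. This operator has $n+1$ simple eigenvalues, so every generic factor has rank one. A rank-one factor $\cE^{-\lambda/u}\otimes\cR$ has a residue computable from the eigenvector: projecting $\Grad$ onto the eigenline gives the trace, which is the average of $(a-n)/2$ over degrees by symmetry of the eigenvector $\sum\zeta^{-k}\alpha^{-k}P^k$ (roughly). That average is $0$, so $F=\ms{\cl0}$ and the factor is of point type. More carefully, we would use the formal-splitting correction: the residue of the rank-one factor is the diagonal entry $\langle e_\lambda^\vee,\Grad e_\lambda\rangle$ in Lemma~\ref{lem:splitting}, and this equals $\sum_k (2k-n)/(2(n+1))=0$.

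The main obstacle is legitimacy. The small locus might not lie in the generic locus or the cyclic locus; for $\PP^n$ it does, since the spectrum is simple. In addition, the transfer through blowups must reach all generic components, which Lemma~\ref{lem:separation}(a) provides.
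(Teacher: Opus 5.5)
Your proposal is correct. For points, curves of genus $g\geq1$, and surfaces it follows the paper's argument:
\begin{itemize}
\item Proposition~\ref{prop:nef} handles the nef cases.
\item When $K$ is nef, $N^3=0$ together with $\rho\geq1$ forces $m=3$.
\item $(-1)$-curves are contracted using Lemma~\ref{lem:separation}(a).
\item Geometrically ruled surfaces reduce to curves by the projective-bundle comparison.
\end{itemize}

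The genuine difference is the projective-space case. The paper does no computation there. It views $\PP^n=\PP(\cO^{\oplus(n+1)})$ as a bundle over a point, so Lemma~\ref{lem:separation} sends any generic factor of $\PP^n$ to a factor of one copy of $\mathcal F_{\mathrm{pt}}$, which is trivially of point type. You instead compute at the one-parameter small locus. There Theorem~\ref{thm:fano} applies and the $n+1$ eigenvalues are simple. You obtain the rank-one residue $\langle e_\lambda^\vee,Ge_\lambda\rangle=\sum_k\frac{2k-n}{2(n+1)}=0$, and this computation is right; it also covers $\PP^1$ and $\PP^2$. Your route makes projective space explicit rather than dependent on the comparison domains of \cite{KKPY}.

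The cost is one step you leave implicit. Proposition~\ref{prop:cyclic-transport} transports data only within a single component $S$ of the spectral cover. So you need the given $S$ to contain a point lying over your small-locus point. The paper gets the corresponding fact from the component-surjectivity built into Lemma~\ref{lem:separation}. You can supply it in either of two ways:
\begin{itemize}
\item Note that $U^{\mathrm{gen}}_{\PP^n}$ is the complement, in the connected smooth space $B_{\PP^n}$, of the zero locus of a not-identically-zero analytic function (the discriminant of the characteristic polynomial). Hence it is connected, so every component of the finite \'etale spectral cover surjects onto it.
\item More cleanly, drop the small locus. At every point of $U^{\mathrm{gen}}_{\PP^n}$ all eigenvalues are simple. $K^{\mathrm F}$ is self-adjoint and $G$ is antisymmetric for the Poincar\'e pairing $g$, so the left eigenvector is $g(e_\lambda,-)$. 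The residue of each rank-one factor is then $g(e_\lambda,Ge_\lambda)/g(e_\lambda,e_\lambda)=0$. This gives point type directly at every generic point, with no transport needed.
\end{itemize}
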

 \begin{proof}
  All transfers in this proof use Lemma~\ref{lem:separation};
  in particular, they preserve rank, cyclicity, and the
  regularized exponent multiset.
  
  For a point, $H_{\Hdg}=\Q\mathbf1$, $G=0$, and $K^{\mathrm F}=-y_0\id$.
  Its regularized connection is trivial, so it has point type.
  Since $\PP^a$ is a projective bundle over a point, the
  projective-bundle comparison proves the assertion for every
  projective space.
  
  Let $C$ be a smooth projective curve of genus $g\geq1$.
  Every genus-zero stable map to $C$ is constant. On the
  Hodge-fixed base, the only additive coordinate is the unit
  coordinate, and the quantum product is the cup product.
  Writing $\eta\in H^2(C,\Q)$ for the class with
  $\int_C\eta=1$, we have
  \[
  H_{\Hdg}(C)=\Q\mathbf1\oplus\Q\eta,
  \qquad
  K^{\mathrm F}+y_0\id=(2g-2)\,\eta\cup(-).
  \]
  For $g=1$ this is zero, so the rank-two generalized
  eigenspace is never cyclic. For $g\geq2$ it is a single
  nilpotent Jordan block of size two. Proposition~\ref{prop:nef}
  gives the exponent multiset
  $\ms{\cl{1/2},\cl{1/2}}$. This proves the assertion for curves.
  
  Now let $W$ be a smooth projective surface. Contract its
  $(-1)$-curves to obtain a minimal smooth projective surface
  $\Sigma$. At each step, the selected factor either transfers
  to $\Sigma$ or to a point. In the latter case, it has point type.
  By the classification of minimal smooth projective surfaces
  \cite{BeauvilleSurfaces}, if the minimal surface is rational or ruled, it is either
  $\PP^2$ or a projective bundle over a smooth projective curve.
  The preceding cases and the projective-bundle comparison
  then apply.
  
  In the remaining case, $K_\Sigma$ is nef. By
  Proposition~\ref{prop:nef}, the entire Hodge-invariant fiber
  has a single eigenvalue, and its rank is
  \[
  m=\dim H_{\Hdg}(\Sigma)=\rho(\Sigma)+2.
  \]
  At a cyclic representative, subtract the unit eigenvalue and
  write $N:=K^{\mathrm F}+y_0\id$. The degree argument in
  Proposition~\ref{prop:nef} shows that $N$ strictly raises
  cohomological degree. Since the Hodge-invariant degrees are
  $0,2,4$, we have $N^3=0$. A single Jordan block of size $m$
  therefore has $m\leq3$. Projectivity implies
  $\rho(\Sigma)\geq1$, so $m=3$ and $\rho(\Sigma)=1$.
  
  Proposition~\ref{prop:nef} then gives
  $F=\ms{\cl0,\cl0,\cl0}$.
 \end{proof}
 
 We now prove the rationality criterion directly by weak factorization, using Lemma~\ref{lem:separation} to keep track of the selected cyclic formal factor.
 
 \begin{theorem}\label{thm:criterion}
  Let $X$ be a connected smooth complex projective variety of
  dimension $2\leq n\leq4$. Suppose that a generic Hodge factor
  has a cyclic representative of unramified exponential type
  with rational residues. Let $m$ be its rank and $F$ its
  regularized exponent multiset. If $X$ is rational, then:
  \begin{enumerate}
   \item if $n=2$, then
   \[
   (m,F)=(1,\ms{\cl0});
   \]
   \item if $n=3$, then
   \[
   (m, F)\in
   \left\{
   (1,\ms{\cl0}),
   (2,\ms{\cl{1/2},\cl{1/2}})
   \right\};
   \]
   \item if $n=4$, then
   \[
   (m, F)\in
   \left\{
   (1,\ms{\cl0}),
   (2,\ms{\cl{1/2},\cl{1/2}}),
   (3,\ms{\cl0,\cl0,\cl0})
   \right\}.
   \]
  \end{enumerate}
 \end{theorem}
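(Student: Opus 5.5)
Assume $X$ is rational, so there is a birational map $X\dashrightarrow\PP^n$. Weak factorization resolves it into a zigzag of blowups and blowdowns along smooth connected centers, all between smooth projective varieties birational to $X$. The plan is to follow the selected cyclic factor along this zigzag, starting from $X$. Lemma~\ref{lem:separation} is applied at each step.

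Consider a blowup $\Bl_ZY\to Y$ with $\operatorname{codim}Z=c\geq2$. If the factor currently sits on one side, part (a) or (b) of that lemma moves it either to the other side or into one of the $c-1$ copies of $\mathcal F_Z$. By (c), every transfer keeps the rank $m$, cyclicity at a rational point, unramified exponential type with rational residues, and the multiset $F$. Proposition~\ref{prop:cyclic-transport} allows us to re-choose representatives on the comparison domains, so the hypotheses can be iterated. If the factor survives all the way to $\PP^n$, Proposition~\ref{prop:low-dimension} says it is of point type, $(1,\ms{\cl0})$. Otherwise, at some step it escapes into a blowup center $Z$ with $\dim Z\leq n-2$.

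Once in $Z$, we continue recursively, with one refinement: $Z$ is not assumed rational. If $\dim Z\leq2$, Proposition~\ref{prop:low-dimension} applies directly, and this already restricts the possible dimensions. For $n=2$, centers are points, which give only point type. For $n=3$, centers are points or curves, giving point or curve type. For $n=4$, centers have dimension at most $2$, so all three types can occur.

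The step that needs care is the bookkeeping near the top dimension. For $n=3$ one must check that surface type cannot arise; this holds because centers have dimension at most $1$ and the factor never sits on a non-rational surface. The case $n=2$ is the same argument with centers of dimension zero. A second point to check is that Lemma~\ref{lem:separation}(a) applies when the factor is on the blown-up side and we are about to blow down. That lemma is symmetric in how it handles left and right, so the direction in the zigzag does not matter. Induction on the length of the zigzag, together with induction on dimension, then gives the three tables.
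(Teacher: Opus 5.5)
Your proposal is correct and follows the paper's proof: you apply weak factorization, use the forward direction of Lemma~\ref{lem:separation} when blowing down and the converse direction when blowing up, and stop either on $\PP^n$ or on a center of dimension at most $n-2\leq 2$, where Proposition~\ref{prop:low-dimension} finishes the argument. The recursion and the induction on dimension are not needed, because every center already has dimension at most two and the proposition applies to it whether or not it is rational; likewise, parts (a) and (b) of the lemma are not symmetric, since only (a) can land the factor on a center, although your first paragraph already handles this correctly.
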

 \begin{proof}
  Assume that $X$ is rational. By weak factorization
  \cite[Thm.~0.1.1]{AKMW}, a
  birational map $X\dashrightarrow\PP^n$ factors into a finite
  sequence of blowups and blowdowns between smooth projective
  $n$-folds, with smooth centers of codimension at least two.
  
  Track the selected generic Hodge factor along this sequence.
  If the next variety is the blowup of the current one, use
  the converse direction of Lemma~\ref{lem:separation}, with
  the current variety as the specified summand. If the current
  variety is the blowup of the next one, use the forward
  direction of the lemma. The factor then transfers either
  to the next variety or to one of the copies of the center.
  At every step, its rank, cyclicity, unramified exponential
  type, rational residue eigenvalues, and exponent multiset
  are preserved.
  
  The process terminates either with a factor on a center
  of dimension at most $n-2$, or with a factor on $\PP^n$.
  In the latter case Proposition~\ref{prop:low-dimension}
  gives point type. In the former case, the same proposition
  gives point type when $n=2$, point or curve type when
  $n=3$, and point, curve, or surface type when $n=4$.
 \end{proof}

 \section{Applications}
 \label{sec:examples}
 
 Theorem~\ref{thm:criterion} reduces the rationality question to a
 specific task: find a cyclic generic Hodge factor and compute its
 rank and regularized exponent multiset. The rank can be read from
 quantum multiplication, but the exponents require a computation with
 the connection. We explain this computation as a procedure that can
 be checked directly in each example. Its input is the pair of matrices
 $K^{\mathrm F}$ and $G$ in the disk connection
 \[
 \nabla_{\partial_u}=\partial_u+\frac{K^{\mathrm F}}{u^2}+\frac Gu;
 \]
 its output is the pair $(m,F)$ to be compared with the list in
 Theorem~\ref{thm:criterion}.
 
 There are two issues to address. First, we must justify that a factor
 computed at a particular small locus point is one of the generic
 factors to which the theorem applies. Second, we must recover the
 residue exponents after removing that factor's exponential term.
 The following steps address these issues algorithmically.
 
 \paragraph{\textbf{Step 1}:}
 Choose a point on the one-parameter small locus
 \eqref{eq:very-small}, represented by $q\in\Bbbk^\times$ with
 $0<|q|<1$, and evaluate $K^{\mathrm F}$ there. The criterion concerns a \emph{generic} factor, whereas we have
 chosen a specific small locus point. The following determinant
 provides a sufficient test that this point is suitable (Lemma~\ref{lem:cyclic}):
 \[
 \Delta(q):=
 \det\bigl(\mathbf1,K^{\mathrm F}\mathbf1,\ldots,(K^{\mathrm F})^{N-1}\mathbf1\bigr)\neq 0.
 \]
 Indeed, it guarantees that the number of distinct eigenvalues is locally
 constant near the chosen point. The neighborhood meets the dense
 generic spectral locus, so this locally constant number is the
 generic number. Hence, the point belongs to
 $U_X^{\mathrm{gen}}$. Furthermore, $\Delta(q)\ne0$ makes $K^{\mathrm F}$
 a cyclic operator.
 
 \paragraph{\textbf{Step 2}:} Factor the characteristic polynomial of $K^{\mathrm F}$, choose an eigenvalue
 $\lambda$, and let $m$ be its algebraic multiplicity. We seek the
 exponents of the corresponding rank-$m$ factor. For the Fano
 varieties considered here, Theorem~\ref{thm:fano} gives
 \[
 \cM_\lambda\cong\cE^{-\lambda/u}\otimes\cR_\lambda,
 \qquad \operatorname{rk}\cR_\lambda=m,
 \]
 where $\cR_\lambda$ is regular singular. By
 Definition~\ref{def:exponents}, the desired multiset is
 $F=\Exp(\cR_\lambda)$. 
 
Let $y(u)$ denote a horizontal section, meaning
 \[
 y'=\left(-\frac{K^{\mathrm F}}{u^2}-\frac Gu\right)y.
 \]
Recalling Definition~\ref{def:exponents}, a horizontal section of $\cE^{-\lambda/u}$ has scalar factor $e^{\lambda/u}$. Writing $y=e^{\lambda/u}\phi$ and differentiating
 gives:
 \begin{equation}\label{eq:phi-system}
  u^2\phi'=\bigl(\lambda\id-K^{\mathrm F}-uG\bigr)\phi.
 \end{equation}
 
 \paragraph{\textbf{Step 3}:} Seek a formal solution for Equation~\eqref{eq:phi-system} following the ansatz
 \[
 \phi=u^s\sum_{k\geq0}u^kv_k,\qquad v_0\ne0.
 \]
 We call such a solution \emph{log-free}.
 The exponents are rational by Theorem~\ref{thm:fano}. 
\begin{lemma}\label{lem:application-matrix-recursion}
 A log-free solution of \eqref{eq:phi-system} satisfies the recursive relations
 \begin{equation}\label{eq:application-matrix-recursion}
  (\lambda\id-K^{\mathrm F})v_0=0,\qquad
  (\lambda\id-K^{\mathrm F})v_k
  =\bigl(G+(s+k-1)\id\bigr)v_{k-1}\quad(k\geq1).
 \end{equation}
 For every vector $w$ satisfying $w^\top(\lambda\id-K^{\mathrm F})=0$,
 these relations imply the solvability conditions
 \begin{equation}\label{eq:application-left-obstruction}
  w^\top\bigl(G+(s+k-1)\id\bigr)v_{k-1}=0.
 \end{equation}
 Furthermore, every exponent class of $\cR_\lambda$ is represented modulo
 $\Z$ by the leading power $s$ of at least one such solution.
\end{lemma}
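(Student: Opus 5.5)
The first two assertions are formal computations; the third needs the structure theory of regular-singular modules. I would proceed in three steps.

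\textbf{Step (a): the recursion.} Substitute the ansatz $\phi=u^s\sum_{k\ge0}u^kv_k$ into \eqref{eq:phi-system}. The left side is
\[
u^2\phi'=\sum_{k\ge0}(s+k)u^{s+k+1}v_k .
\]
The right side is
\[
\sum_{k\ge0}u^{s+k}(\lambda\id-K^{\mathrm F})v_k-\sum_{k\ge0}u^{s+k+1}Gv_k .
\]
Comparing coefficients of $u^{s}$ gives $(\lambda\id-K^{\mathrm F})v_0=0$. Comparing coefficients of $u^{s+k}$ for $k\ge1$ gives $(\lambda\id-K^{\mathrm F})v_k=(G+(s+k-1)\id)v_{k-1}$. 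This is \eqref{eq:application-matrix-recursion}; the only care needed is the bookkeeping of exponents. (Here $u^s$ is a formal symbol with $\partial_u u^s=su^{s-1}$.)

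\textbf{Step (b): the solvability conditions.} Multiply the $k$-th relation on the left by $w^\top$. Since $w^\top(\lambda\id-K^{\mathrm F})=0$, the left side vanishes, which yields \eqref{eq:application-left-obstruction}. This is the Fredholm alternative for the singular operator $\lambda\id-K^{\mathrm F}$.

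\textbf{Step (c): exponent classes are realized.} Use Theorem~\ref{thm:fano}: $\cM_\lambda\cong\cE^{-\lambda/u}\otimes\cR_\lambda$ with $\cR_\lambda$ regular singular with rational residues. By Lemma~\ref{lem:splitting} and Corollary~\ref{cor:leading-exponential}, $\cM_\lambda$ embeds as a differential summand of the full connection. Any horizontal section of the form $e^{\lambda/u}\cdot(\text{section of }\cR_\lambda)$ therefore gives a solution $\phi$ of \eqref{eq:phi-system}, viewed in the whole space.

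It remains to find log-free solutions in $\cR_\lambda$ whose leading powers represent each class. Choose a logarithmic lattice of $\cR_\lambda$. By the standard shearing reduction \cite[Ch.~3]{PutSinger}, it can be arranged that residue eigenvalues do not differ by nonzero integers. Then $\cR_\lambda$ is formally gauge equivalent, by a $\Bbbk[[u]]$-gauge, to $\delta+R$ with $R$ constant.

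Take an eigenvector $e$ of $R$ with eigenvalue $\vartheta$, and set $y=u^{-\vartheta}e$. This is horizontal and log-free, with no logarithm because $e$ is a genuine eigenvector, and its leading power is $-\vartheta$. Transporting it back through the gauge gives $u^{-\vartheta}$ times a power series with nonzero constant term. Applying the embedding into $\rH^*\otimes\Bbbk((u))$ multiplies by a Laurent series, which shifts the leading power by an integer.

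Hence every class $-\vartheta+\Z\in\Exp(\cR_\lambda)$ is represented by the leading power $s$ of some log-free solution. Rationality of $s$ follows from Theorem~\ref{thm:fano}.

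\textbf{Main obstacle.} The delicate point is the last step: guaranteeing log-freeness while tracking integer shifts under the gauge and the embedding. This is handled by reducing to a constant residue with non-resonant spectrum and using only eigenvectors, not full Jordan chains.
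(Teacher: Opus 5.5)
Your proposal is correct and follows the paper's proof essentially step for step. You derive the recursion by comparing coefficients, obtain the solvability conditions from the left kernel vector, and for the last assertion you view $\cR_\lambda$ as a differential summand of the $\cE^{\lambda/u}$-twisted connection. You then use the constant-residue normal form of \cite[Prop.~3.12]{PutSinger}, which you make explicit via shearing to a non-resonant residue, to produce a log-free eigenvector solution whose leading power changes only by an integer under the Laurent-series change of basis.
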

 \begin{proof}
Substituting the formal series into \eqref{eq:phi-system} and equating the coefficients of $u^s$ yields the primary relation $(\lambda\id-K^{\mathrm F})v_0=0$. Evaluating the coefficients at order $u^{s+k}$ gives the equality
 $(\lambda\id-K^{\mathrm F})v_k=(G+(s+k-1)\id)v_{k-1}$. Applying the left kernel vector $w^\top$ annihilates the left-hand side, yielding the stated solvability condition.

 Regarding the final assertion, the formal spectral decomposition provided by
 Theorem~\ref{thm:fano}, after twisting by $\cE^{\lambda/u}$, reads
 \[
 \cR_\lambda\ \oplus\!
 \bigoplus_{\nu\ne\lambda}
 \cE^{(\lambda-\nu)/u}\otimes\cR_\nu.
 \]
 Consequently, $\cR_\lambda$ operates as a direct summand of the twisted
 matrix connection: the formal isomorphism maps a vector in
 this initial summand to the original cohomology basis via a
 Laurent-series change of basis. For each residue eigenvalue of $\cR_\lambda$,
 the constant-residue formal normal form of
 \cite[Prop.~3.12]{PutSinger} (valid over the algebraically
 closed characteristic-zero field $\Bbbk$ by
 \cite[Rem.~3.20(1)]{PutSinger}) yields an
 eigenvector solution carrying the opposite power of $u$ entirely free of
 logarithms. Expressing this solution in the original basis through the
 formal meromorphic isomorphism and the inclusion of the direct
 summand produces a solution to \eqref{eq:phi-system}. Since a Laurent-series change of basis modifies the leading power by an integer,
 its class modulo $\Z$ is the same.
 \end{proof}

 \ 

 In practice, the recursive system \eqref{eq:application-matrix-recursion} is solved step by step. Because the selected generic Hodge factor is cyclic, the operator $K^{\mathrm F}$ restricted to this generalized eigenspace consists of a single Jordan block. Consequently, the kernel of $(\lambda\id-K^{\mathrm F})$ is one-dimensional and is spanned by $v_0$. For any $k \geq 1$, once the solvability condition \eqref{eq:application-left-obstruction} is satisfied, the general solution to the inhomogeneous equation takes the form
\begin{equation}\label{eq:kernel-parameter}
    v_k = v_k^{\mathrm{part}} + c_k v_0,
\end{equation}
where $v_k^{\mathrm{part}}$ is a particular solution and $c_k \in \Bbbk$ is an arbitrary scalar constant. In the subsequent computations, we refer to $c_k$ as a \emph{free kernel parameter}.
 
 \subsection{Threefolds}
 
\begin{example}[Cubic threefold]\label{ex:cubic3}
 Let $X\subset\PP^4$ be a smooth cubic threefold and
 $H=c_1(\cO_X(1))$. Lefschetz hyperplane theorem and Poincar\'e duality give
 \[
 H_{\Hdg}(X)=\operatorname{span}_\Q\{\mathbf 1,H,H^2,H^3\}.
 \]
 The primitive middle cohomology has odd degree and contributes
 no rational $(p,p)$ classes. Since $X$ has Fano index two, $K^{\mathrm F}=-2H\star(-)$.
 The operator for small quantum multiplication by $H$ is extracted from
 \cite{Givental}; compare
 \cite[\S3]{cai2026cubicthreefoldsymplecticallyirrational}:
 \begin{equation}\label{eq:cubic-matrix}
  K^{\mathrm F}=-\begin{pmatrix}
   0&12q&0&72q^2\\
   2&0&30q&0\\
   0&2&0&12q\\
   0&0&2&0
  \end{pmatrix},
  \qquad
  G=\operatorname{diag}\Bigl(-\frac32,-\frac12,\frac12,\frac32\Bigr).
 \end{equation}
 The characteristic polynomial and the cyclic determinant evaluate to
 \begin{equation}\label{eq:cubic-spectrum}
  \det(x\id-K^{\mathrm F})=x^2(x^2-108q),
  \qquad
  \det\bigl(\mathbf1,K^{\mathrm F}\mathbf1,(K^{\mathrm F})^2\mathbf1,(K^{\mathrm F})^3\mathbf1\bigr)=64.
 \end{equation}
Thus, the small locus point is generic, and the eigenvalue $\lambda=0$ yields a cyclic generic Hodge factor of rank two.
 We compute the regularized exponents of this factor.
 
 For $\lambda=0$, the right and left kernels of $-K^{\mathrm F}$
 are generated respectively by
 \[
 v_0=(0,-6q,0,1)^\top,\qquad
 w^\top=(1,0,-6q,0).
 \]
 The $k=1$ obstruction in \eqref{eq:application-left-obstruction}
 vanishes identically. Solving the inhomogeneous equation at that order provides
 the particular and general solutions:
 \[
 v_1^{\mathrm{part}}
 =\left(-\frac{q(42s+39)}4,0,\frac{2s+3}4,0\right)^\top,
 \qquad v_1=v_1^{\mathrm{part}}+c v_0.
 \]
 Substituting this general solution into the subsequent obstruction yields an expression strictly independent of the free kernel parameter $c$:
 \[
 w^\top\bigl(G+(s+1)\id\bigr)v_1
 =-\frac{3q}{8}(6s+1)(6s+5).
 \]
 Thus, every exponent class must be represented by one of the
 leading powers $s=-5/6,-1/6$. To verify the two occur, write
 $\phi=(\phi_0,\phi_1,\phi_2,\phi_3)^\top$ and set
 $f=\phi_3$. The last three rows of \eqref{eq:phi-system}
 read
 \[
 \begin{split}
  u(u\partial_u+3/2)f&=2\phi_2,\\
  u(u\partial_u+1/2)\phi_2&=2\phi_1+12qf,\\
  u(u\partial_u-1/2)\phi_1&=2\phi_0+30q\phi_2.
 \end{split}
 \]
 Solving these equations consecutively yields
 \[
 \phi_3=f,\qquad
 \phi_2=\frac u2\Bigl(u\partial_u+\frac32\Bigr)f,\qquad
 \phi_1=\Bigl[\frac{u^2}{4}
 \Bigl(u\partial_u+\frac32\Bigr)^2-6q\Bigr]f,
 \]
 \[
 \phi_0=\Bigl[\frac{u^3}{8}\Bigl(u\partial_u+\frac32\Bigr)^3
 -\frac{qu}{4}(42u\partial_u+39)\Bigr]f.
 \]
 The first row consequently reduces to the scalar differential equation
 \begin{equation}\label{eq:cubic-scalar}
  \Bigl[u^2\Bigl(u\partial_u+\frac32\Bigr)^4
  -3q(6u\partial_u+1)(6u\partial_u+5)\Bigr]f=0.
 \end{equation}
 Conversely, the displayed formulas reconstruct a full horizontal section of
 all four rows from any solution of this scalar equation. Substituting the formal ansatz
 $f=u^s\sum_{k\geq0}a_ku^k$, with $a_0=1$ and
 $a_{-1}=a_{-2}=0$, yields the coefficient recurrence:
 \[
 3q\bigl(6(s+k)+1\bigr)\bigl(6(s+k)+5\bigr)a_k
 =\bigl(s+k-\tfrac12\bigr)^4a_{k-2}.
 \]
 At $k=0$, this confirms the indicial roots $s=-5/6$ and $s=-1/6$. 
 These two admissible leading powers yield two linearly independent log-free solutions, yielding:
  \begin{equation}\label{eq:cubic-exponents}
  m=2,\qquad
  F=\ms{\cl{-5/6},\cl{-1/6}}
  =\ms{\cl{1/6},\cl{5/6}}.
 \end{equation}
 This rank-two factor is not of curve type. Theorem~\ref{thm:criterion} therefore
 implies that every smooth cubic threefold is irrational.
\end{example}
 
 \begin{example}[Ordinary Gushel--Mukai threefold]\label{ex:GM3}
 Let $X\subset\operatorname{Gr}(2,5)$ be the smooth zero locus of a
 section of $\cO(1)^{\oplus2}\oplus\cO(2)$, and let
 $H=c_1(\cO_X(1))$. Adjunction gives $c_1(TX)=H$.
 Using Lefschetz hyperplane theorem and Poincar\'e duality we have
 \[
 H_{\Hdg}(X)=\operatorname{span}_\Q\{\mathbf 1,H,H^2,H^3\}.
 \]
 In the ordered basis $\{\mathbf1,H,H^2,H^3\}$, the operator for small quantum multiplication by $H$ is extracted from \cite[Thm.~6.1.1]{Przyjalkowski}; see also \cite{alexeev2026atomicdecompositioncompleteintersection}:
 \begin{equation}\label{eq:gm-matrix}
  K^{\mathrm F}=-\begin{pmatrix}
   0&156q^2&3600q^3&33120q^4\\
   1&10q&380q^2&3600q^3\\
   0&1&10q&156q^2\\
   0&0&1&0
  \end{pmatrix},
  \qquad
  G=\operatorname{diag}\Bigl(-\frac32,-\frac12,\frac12,\frac32\Bigr).
 \end{equation}
 The characteristic polynomial and the cyclic determinant evaluate to
 \begin{equation}\label{eq:gm-spectrum}
  \det(x\id-K^{\mathrm F})=(x-6q)^2(x^2+32qx-244q^2),
  \qquad
  \det\bigl(\mathbf1,K^{\mathrm F}\mathbf1,(K^{\mathrm F})^2\mathbf1,(K^{\mathrm F})^3\mathbf1\bigr)
  =1.
 \end{equation}
 Since $q\ne0$, the remaining two eigenvalues are distinct from each
 other and from $6q$. The small locus point is therefore generic, and the
 eigenvalue $\lambda=6q$ provides a cyclic generic Hodge factor of rank two.
 
 For $\lambda=6q$, the right and left kernels of
 $6q\id-K^{\mathrm F}$ are generated respectively by
 \[
 v_0=(-360q^3,-60q^2,-6q,1)^\top,
 \qquad
 w^\top=(1,-6q,-60q^2,-360q^3).
 \]
 The initial obstruction $w^\top(G+s\id)v_0$ vanishes
 identically. Solving the inhomogeneous equation at order $k=1$ gives
 \[
 v_1=(-q^2(88s+108),-q(22s+27),s+3/2,0)^\top+c v_0.
 \]
 Substituting this general solution into the subsequent obstruction yields an expression independent of the free kernel parameter $c$:
 \[
 w^\top\bigl(G+(s+1)\id\bigr)v_1
 =-16q^2s(s+1).
 \]
 Because $q\ne0$, every log-free leading power is constrained to $s\in\{0,-1\}$. Lemma~\ref{lem:application-matrix-recursion}
 ensures that such a power represents each exponent class of the regular-singular factor. Both admissible powers belong to the class
 $\cl0$, and the formal factor has rank two. Therefore, the invariants are
 \begin{equation}\label{eq:gm-exponents}
   m=2,\qquad F=\ms{\cl0,\cl0}.
 \end{equation}
 This rank-two factor is not of curve type. Theorem~\ref{thm:criterion} therefore implies the irrationality of every smooth ordinary Gushel--Mukai threefold.
\end{example}
 
 \subsection{Very general quartic fourfold}
 \begin{example}[Very general quartic fourfold]\label{ex:quartic4}
  Let $X\subset\PP^5$ be a smooth quartic fourfold and
  $H=c_1(\cO_X(1))$. Set
  \[
  \mathbf R=\operatorname{span}_\Q\{\mathbf 1,H,H^2,H^3,H^4\}.
  \]
  Assume that:
  \begin{equation}\label{eq:quartic-hodge-condition}
   \rH^4(X,\Q)_{\mathrm{prim}}\cap H^{2,2}(X)=0.
  \end{equation}
  View quartics as hyperplane sections of the fourth Veronese embedding
  of $\PP^5$. Since $h^{3,1}(\PP^5)=0<21=h^{3,1}(X)$,
  \cite[Prop.~3.1]{BFGMP2026} implies this condition for a very general
  quartic fourfold; see also
  \cite[Ex.~6.7]{alexeev2026atomicdecompositioncompleteintersection}. In this case, using
  Lefschetz hyperplane theorem in combination with Poincar\'e duality gives $H_{\Hdg}(X)=\mathbf R$.
  
The matrix of small quantum multiplication is obtained from \cite[Ex.~6.7]{alexeev2026atomicdecompositioncompleteintersection}:
  \begin{equation}\label{eq:quartic-matrix}
   K^{\mathrm F}=-\begin{pmatrix}
    0&48q&0&5568q^2&0\\
    2&0&208q&0&5568q^2\\
    0&2&0&208q&0\\
    0&0&2&0&48q\\
    0&0&0&2&0
   \end{pmatrix},
   \qquad
   G=\operatorname{diag}(-2,-1,0,1,2).
  \end{equation}
  Here $K^{\mathrm F}=-2H\star$, since the Fano index is two. Moreover,
  \begin{equation}\label{eq:quartic-spectrum}
   \det(x\id-K^{\mathrm F})=x^3(x^2-1024q),
   \qquad
   \det\bigl(\mathbf1,K^{\mathrm F}\mathbf1,\ldots,(K^{\mathrm F})^4\mathbf1\bigr)=2^{10}.
  \end{equation}
  Under \eqref{eq:quartic-hodge-condition}, this proves cyclicity
  on the entire Hodge-invariant space. Lemma~\ref{lem:cyclic}
  then shows that the small locus point is generic: the Euler powers
  span the five-dimensional Hodge-fixed tangent space, so their leaf
  contains a neighborhood in which the spectral multiplicities are
  constant, and this neighborhood meets the dense generic locus.
  We select the zero eigenvalue, whose single Jordan block on the
  Hodge-invariant space has size three.
  
  For $\lambda=0$, right and left kernels of $-K^{\mathrm F}$
  are generated by
  \[
  v_0=(-288q^2,0,-24q,0,1)^\top,
  \qquad w^\top=(1,0,-24q,0,-288q^2).
  \]
  The obstructions at $k=1$ and $k=2$ vanish identically. Setting
  the free kernel parameters to zero gives particular solutions
  \[
  \begin{split}
   v_1^{\mathrm{part}}
   &=(0,-q(64s+104),0,s/2+1,0)^\top,\\
   v_2^{\mathrm{part}}
   &=(-2q(29s^2+78s+52),0,s^2/4+s+1,0,0)^\top.
  \end{split}
  \]
  More precisely, the general solutions at these orders are
  \[
  \begin{split}
   v_1&=v_1^{\mathrm{part}}+cv_0,\\
   v_2&=v_2^{\mathrm{part}}
   +c(0,-8q(8s+21),0,(s+3)/2,0)^\top+dv_0.
  \end{split}
  \]
  Keeping both parameters in the $k=3$ solvability condition,
  they cancel and leave
  \[
  w^\top\bigl(G+(s+2)\id\bigr)v_2
  =-16q(s+1)(2s+1)(2s+3).
  \]
  This restricts the possible leading powers to
  $-3/2,-1,-1/2$. 
   They reconstruct three independent vectors, equal to the rank of the selected factor. Therefore
  \begin{equation}\label{eq:quartic-exponents}
   m=3,\qquad
   F=\ms{\cl{-3/2},\cl{-1},\cl{-1/2}}
   =\ms{\cl0,\cl{1/2},\cl{1/2}}.
  \end{equation}
  This rank-three factor is not of surface type, so Theorem~\ref{thm:criterion} implies
  irrationality. In particular, every smooth quartic fourfold satisfying \eqref{eq:quartic-hodge-condition} is irrational, and hence every very general smooth quartic fourfold is irrational.
 \end{example}

 \subsection{Products with the projective line}

 We conclude by showing that smooth ordinary Gushel--Mukai threefolds and smooth cubic threefolds are irrational after taking products with $\mathbb P^1$. We need the following:
 
 \begin{proposition}\label{prop:product}
  Let $V$ be a connected smooth complex projective threefold.
  Suppose it has a cyclic generic Hodge factor of rank two, of
  unramified exponential type with rational residues, whose exponent
  multiset $F$ is different from
  $\ms{\cl{1/2},\cl{1/2}}$. Then $V\times\PP^1$ is irrational.
 \end{proposition}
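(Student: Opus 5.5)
The plan is to transfer the given factor of $V$ to a cyclic generic Hodge factor of the fourfold $X:=V\times\PP^1$ with the same rank and exponent multiset, and then apply Theorem~\ref{thm:criterion}(iii).

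\textbf{Step 1: transfer to $X$.} We regard $X=\PP(\cO_V\oplus\cO_V)$ as a projective bundle of rank $r=2$ over $V$. The projective-bundle comparison $\mathcal F_X\simeq\mathcal F_V\boxplus\mathcal F_V$ of Lemma~\ref{lem:separation} applies, and its part (b) with the first copy of $\mathcal F_V$ as the specified summand moves the selected factor to a generic Hodge factor of $X$. The hypotheses of that lemma are exactly those assumed for $V$: a cyclic representative at a $\Bbbk$-rational point, unramified exponential type, and rational residues. Part (c) then gives a cyclic representative on $X$ of unramified exponential type with rational residues. The transferred factor has rank $m=2$ and exponent multiset $F$.

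\textbf{Step 2: rule out the allowed types.} Now suppose $X$ is rational. Since $\dim X=4$, Theorem~\ref{thm:criterion}(iii) forces $(m,F)$ to be point, curve, or surface type.
\begin{itemize}
\item Point and surface type have ranks $1$ and $3$, so they are excluded because the rank is $2$.
\item Curve type requires $F=\ms{\cl{1/2},\cl{1/2}}$, which contradicts the hypothesis on $F$.
\end{itemize}
Hence $X$ is irrational.

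\textbf{Main obstacle.} The delicate point is Step 1. We must confirm that the product $V\times\PP^1$ falls under the projective-bundle comparison domain of \cite{KKPY}, with the trivial bundle $E=\cO_V^{\oplus2}$, and that the selected factor survives as a \emph{generic} factor of $X$ rather than merely a factor at a special point. Lemma~\ref{lem:separation}(i)--(ii) handles genericity. It separates the spectra of the two copies by independent unit-coordinate translations and uses the cyclic transport of Proposition~\ref{prop:cyclic-transport} to find a rational cyclic representative on the common generic locus. So the argument reduces to invoking that lemma with $r=2$. If one prefers to avoid the general comparison, one could instead use the Künneth/quantum-product decomposition of $\mathcal F_{V\times\PP^1}$ as a tensor-type product and argue directly. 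I would try the lemma first, since it is already stated in the required generality.
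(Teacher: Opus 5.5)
Your proposal is correct and matches the paper's proof: write $V\times\PP^1\cong\PP(\cO_V\oplus\cO_V)$, use the converse direction of Lemma~\ref{lem:separation} to transfer the selected rank-two factor to a cyclic generic Hodge factor of the fourfold with the same rank and exponent multiset, and note that Theorem~\ref{thm:criterion} allows only curve type among rank-two factors of a rational fourfold. Excluding point and surface type by rank is the same reasoning the paper compresses into one sentence.
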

 \begin{proof}
  Write
  \[
  V\times\PP^1\cong\PP(\cO_V\oplus\cO_V).
  \]
  The projective-bundle comparison of Lemma~\ref{lem:separation}
  identifies the Hodge-invariant F-bundle of this product, on a
  comparison domain, with the external sum of two copies of
  $\mathcal F_V$. The converse direction of the same lemma transfers
  the selected factor of either specified copy to a cyclic generic
  Hodge factor on $V\times\PP^1$, preserving its rank and exponent
  multiset.
  For a rational fourfold, Theorem~\ref{thm:criterion} allows only
  curve type among rank-two cyclic generic Hodge factors. The
  transferred factor violates this condition, proving irrationality.
 \end{proof}
 
 Irrationality of ordinary Gushel--Mukai threefolds and smooth cubic threefolds after products with $\mathbb P^1$ follows: for a smooth cubic threefold $V$, the factor with exponents
 $\ms{\cl{1/6},\cl{5/6}}$ from Example~\ref{ex:cubic3}
 shows that $V\times\PP^1$ is irrational; compare
 \cite{gong2026irrationalitystabilizationcomplexsmooth}.
 Likewise, the factor with exponents $\ms{\cl0,\cl0}$ from
 Example~\ref{ex:GM3} shows that $V\times\PP^1$ is irrational
 for every smooth ordinary Gushel--Mukai threefold $V$.
 
 \medskip
 
 \appendix
 
  \section{Formal persistence along the powers of the Euler field}
 \label{app:formal-persistence}
 
 We keep using the notations introduced in Sections~\ref{sec:F-bundles} and
 \ref{sec:formal}. Namely, given an F-bundle $(\mathcal H,\nabla)/B$, we set $\mathcal H_0:=\cH|_{u=0}$; $\mu$ denotes the action on $\cH_{0}$, $K^{\mathrm F}$ is the residual endomorphism, and $\widehat\cH$ and $\widehat\cH^{\mathrm{mer}}$ are the bundles over $B$ with fibers $\widehat\cH_b=(\cH|_{\{b\}\times\mathbb D})_0\otimes_{\cO_{\mathbb D,0}}\Bbbk[[u]]$ and $\widehat\cH_b^{\mathrm{mer}}=\widehat\cH_b[u^{-1}]$, respectively.
 
    For a maximal F-bundle, $\circ$, $e$, and $\Eu^{\mathrm F}$ denote the tangent product, its unit, and the F-bundle Euler field. Here we prove Theorem~\ref{cl:formal-persistence}, which is the formal-persistence statement of \cite[Rem.~3.14]{KKPY}. 
 
 \begin{lemma}\label{lem:app-transport-operators}
  Let $(\cH,\nabla)/B$ be an F-bundle, let $p\in\Bbbk[z]$,
  and let $X_p$ be an even analytic vector field on $B$ satisfying
  $\mu(X_p)=p(K^{\mathrm F})$. On $\widehat\cH^{\mathrm{mer}}$, set
  \begin{equation}\label{eq:app-transport-operators}
   x:=\nabla_{u^2\partial_u},\qquad
   \partial_x:=u^{-1},\qquad
   \widehat\nabla_p:=\nabla_{X_p}-p(x)\partial_x.
  \end{equation}
  Here $u$ and $\partial_x$ act by multiplication, and products of operators denote composition, with the rightmost operator
  acting first. Then $x$ and $\widehat\nabla_p$ preserve every
  shifted lattice $u^a\widehat\cH$, $a\in\Z$. Moreover, $x_{|\widehat\cH}\bmod u=K^{\mathrm F}$, and
  \begin{equation}\label{eq:app-transport-commutators}
   [\widehat\nabla_p,x]=-p(x),\qquad
   [\widehat\nabla_p,\partial_x]=p'(x)\partial_x,\qquad
   [\widehat\nabla_p,u]=-up'(x).
  \end{equation}
 \end{lemma}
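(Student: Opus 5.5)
The plan is to verify everything directly from the definition of an F-bundle, flatness, and the Weyl-algebra relation $[\partial_x,x]=\mathrm{id}$. All identities are checked on $\widehat\cH^{\mathrm{mer}}$, where $u$ is invertible, so $\partial_x=u^{-1}$ makes sense.

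\emph{Lattice preservation and reduction of $x$.} By definition of an F-bundle, $\nabla_{u^2\partial_u}$ and $\nabla_{u\xi}$ preserve $\widehat\cH$ for every analytic vector field $\xi$ on $B$. Hence $x(\widehat\cH)\subseteq\widehat\cH$ and $x_{|\widehat\cH}\bmod u=K^{\mathrm F}$ by \eqref{eq:F-residual-action}. The Leibniz rule gives $[x,u^a]=a\,u^{a+1}$, so $x(u^ah)=u^a(x+au)h$, and $x$ preserves every $u^a\widehat\cH$.

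For $\widehat\nabla_p$, I write
\[
\widehat\nabla_p=u^{-1}\bigl(\nabla_{uX_p}-p(x)\bigr),
\]
using that $\nabla_{X_p}$ commutes with multiplication by $u$, because $X_p$ is a vector field on $B$ and $u$ is a coordinate on $\mathbb D$. Both $\nabla_{uX_p}$ and $p(x)$ preserve $\widehat\cH$. Modulo $u$ they reduce to $\mu(X_p)$ and $p(K^{\mathrm F})$, which agree by hypothesis. Their difference therefore maps $\widehat\cH$ into $u\widehat\cH$, so $\widehat\nabla_p$ preserves $\widehat\cH$. For the shifted lattice $u^a\widehat\cH$, commute $u^a$ past both terms. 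The first term commutes with $u^a$, and $p(x)u^a=u^ap(x+au)$. The correction $p(x+au)-p(x)$ lies in $u\cdot\Bbbk[x,u]$, so after dividing by $u$ it still preserves $\widehat\cH$. Thus $\widehat\nabla_p(u^a\widehat\cH)\subseteq u^a\widehat\cH$.

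\emph{Commutators.} First, $[\nabla_{X_p},x]=0$. This follows from flatness, since $X_p$ (on $B$) and $u^2\partial_u$ (on $\mathbb D$) are commuting vector fields on $B\times\mathbb D$. Also $[\nabla_{X_p},u]=0$, hence $[\nabla_{X_p},\partial_x]=0$. Next, $[x,u^{-1}]=u^2\partial_u(u^{-1})=-1$, so $[\partial_x,x]=\mathrm{id}$. The operators $x,\partial_x$ therefore satisfy the Weyl relation, and $[\partial_x,p(x)]=p'(x)$. The three identities in \eqref{eq:app-transport-commutators} then follow:
\begin{itemize}
\item $[\widehat\nabla_p,x]=-p(x)[\partial_x,x]=-p(x)$.
\item $[\widehat\nabla_p,\partial_x]=-[p(x),\partial_x]\partial_x=p'(x)\partial_x$.
\item Since $u=\partial_x^{-1}$, we have $[A,\partial_x^{-1}]=-\partial_x^{-1}[A,\partial_x]\partial_x^{-1}$. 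With $A=\widehat\nabla_p$ this gives $[\widehat\nabla_p,u]=-u\,p'(x)\partial_x\,u=-u\,p'(x)$.
\end{itemize}

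\emph{Expected obstacle.} The only delicate points are bookkeeping ones. One is the ordering of non-commuting operators (compositions act rightmost first). The other is the shifted-lattice claim, where the cross terms from $[x,u^a]$ must be seen to carry an extra factor of $u$. No analytic input beyond the F-bundle axioms and flatness is needed.
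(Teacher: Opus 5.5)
Your route is the paper's: you get lattice preservation from the pole condition and from $\nabla_{uX_p}$ and $p(x)$ agreeing modulo $u$, and the three commutators from flatness plus the Weyl relation $[\partial_x,x]=\id$. Your commutator computations are all correct.

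However, the identity $\widehat\nabla_p=u^{-1}\bigl(\nabla_{uX_p}-p(x)\bigr)$ is false. It fails exactly at the ordering issue you flagged. In $p(x)\partial_x$ the multiplication by $u^{-1}$ acts first. By the relation $[\partial_x,p(x)]=p'(x)$, which you derive afterwards, $p(x)u^{-1}=u^{-1}p(x)-p'(x)$. The correct formula, which is the one the paper writes down, is
\[
\widehat\nabla_p=u^{-1}\bigl(\nabla_{uX_p}-p(x)\bigr)+p'(x).
\]

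The repair is harmless. The extra term $p'(x)$ is a polynomial in $x$, so it preserves $\widehat\cH$ and every $u^a\widehat\cH$. Both your argument for $\widehat\cH$ and your shifted-lattice argument go through once this term is added.

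For the shifted lattices the paper argues slightly differently: it inducts on $a$ in both directions using $\widehat\nabla_p u=u\bigl(\widehat\nabla_p-p'(x)\bigr)$. Your direct use of $p(x)u^a=u^a p(x+au)$ also works. You should note, though, that $p(x+au)-p(x)$ can be written as $u\,Q$ with $Q$ a noncommutative polynomial in $x$ and $u$. This holds because $xu=u(x+u)$ lets you move every $u$ to the left.

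The paper reuses this formula later in the appendix, in Lemma~\ref{lem:key} and in writing $\widehat\nabla_p$ as $\partial_s+A(s,u,\delta)$ for the parallel transport. So the version with the $p'(x)$ term is the one to record.
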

 \begin{proof}
  By the defining pole condition of an F-bundle, $x$ preserves
  $\widehat\cH$ and reduces to $K^{\mathrm F}$ modulo $u$.
  The Leibniz rule gives, for every $a\in\Z$,
  \begin{equation}\label{eq:app-x-shift}
   x(u^av)=u^a x(v)+a u^{a+1}v,
   \qquad v\in\widehat\cH^{\mathrm{mer}}.
  \end{equation}
  Thus $x$, and hence every polynomial in $x$, preserves all
  the shifted lattices. Taking $a=-1$ in
  \eqref{eq:app-x-shift} yields $[\partial_x,x]=\id$. Consequently,
  \begin{equation}\label{eq:app-polynomial-commutators}
   [\partial_x,p(x)]=p'(x),\qquad
   [p(x),u]=up'(x)u.
  \end{equation}
  The second identity follows from the first by commuting
  $p(x)$ with $u\partial_x=\id$.
  
  We next check that $\widehat\nabla_p$ preserves
  $\widehat\cH$. Both $u\nabla_{X_p}$ and $p(x)$ preserve
  this lattice, and their reductions are equal:
  \[
  (u\nabla_{X_p})\bmod u=\mu(X_p)
  =p(K^{\mathrm F})=p(x)\bmod u.
  \]
  Their difference, therefore, has image in $u\widehat\cH$.
  Moving $\partial_x=u^{-1}$ to the left in
  \eqref{eq:app-transport-operators}, using
  \eqref{eq:app-polynomial-commutators}, gives
  \begin{equation}\label{eq:app-lattice-transport}
   \widehat\nabla_p
   =u^{-1}\bigl(u\nabla_{X_p}-p(x)\bigr)+p'(x).
  \end{equation}
  Each term on the right preserves $\widehat\cH$.
  
  Since $X_p$ is a vector field on $B$, it is independent of
  $u$. Flatness and the Leibniz rule give
  $[\nabla_{X_p},x]=[\nabla_{X_p},\partial_x]=0$.
  Using \eqref{eq:app-polynomial-commutators}, we obtain
  \[
  \begin{aligned}
  [\widehat\nabla_p,x]
  &=-[p(x)\partial_x,x]=-p(x),\\
  [\widehat\nabla_p,\partial_x]
  &=-[p(x),\partial_x]\partial_x=p'(x)\partial_x.
  \end{aligned}
  \]
  Commuting $\widehat\nabla_p$ with $u\partial_x=\id$
  then gives $[\widehat\nabla_p,u]=-up'(x)$, proving
  \eqref{eq:app-transport-commutators}.
  Finally, these identities can be written as
  \[
  \widehat\nabla_p u=u\bigl(\widehat\nabla_p-p'(x)\bigr),
  \qquad
  \widehat\nabla_p u^{-1}
  =u^{-1}\widehat\nabla_p+p'(x)u^{-1}.
  \]
  Starting with $\widehat\cH$ and using that $p'(x)$
  preserves every shifted lattice, induction in both directions
  proves the assertion for $u^a\widehat\cH$.
 \end{proof}
 
 For a maximal F-bundle over a purely even base, set
 \begin{equation}\label{eq:app-polynomial-fields}
  X_p:=p_\circ(\Eu^{\mathrm F}),\qquad p\in\Bbbk[z],
  \qquad (\Eu^{\mathrm F})^{\circ0}:=e.
 \end{equation}

 \begin{lemma}\label{lem:key}
  Let $(\cH,\nabla)/B$ be a maximal F-bundle over a purely
  even base. For $p,q\in\Bbbk[z]$, the fields defined in
  \eqref{eq:app-polynomial-fields} satisfy
  \begin{equation}\label{eq:key}
   \mu([X_p,X_q])=(p'q-pq')(K^{\mathrm F}).
  \end{equation}
  Equivalently,
  \begin{equation}\label{eq:involution}
   [X_p,X_q]=X_{p'q-pq'}.
  \end{equation}
 \end{lemma}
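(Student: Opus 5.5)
The plan is to derive \eqref{eq:key} from flatness, using the transport operators of Lemma~\ref{lem:app-transport-operators}. We compute the commutator $[\widehat\nabla_p,\widehat\nabla_q]$ in two ways: once algebraically, and once by exploiting that it preserves the lattice $\widehat\cH$. Reduction modulo $u$ then gives the identity.

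First, by \eqref{eq:F-Euler-powers}, the fields $X_p=p_\circ(\Eu^{\mathrm F})$ are even analytic vector fields with $\mu(X_p)=p(K^{\mathrm F})$. Hence Lemma~\ref{lem:app-transport-operators} applies to both $X_p$ and $X_q$. Both operators $\widehat\nabla_p$ and $\widehat\nabla_q$ preserve $\widehat\cH$, so their commutator does as well.

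Next, expand
\[
[\widehat\nabla_p,\widehat\nabla_q]
=[\nabla_{X_p},\nabla_{X_q}]
-[\nabla_{X_p},q(x)\partial_x]
-[p(x)\partial_x,\nabla_{X_q}]
+[p(x)\partial_x,q(x)\partial_x].
\]
\begin{itemize}
\item[(a)] The base is purely even and $\nabla$ is flat, so the first term equals $\nabla_{[X_p,X_q]}$.
\item[(b)] As noted in the proof of Lemma~\ref{lem:app-transport-operators}, $\nabla_{X_p}$ and $\nabla_{X_q}$ commute with $x$ and with $\partial_x=u^{-1}$. Hence the two middle terms vanish.
\item[(c)] For the last term, use the Weyl relation $[\partial_x,x]=\id$, in the form $\partial_x r(x)=r(x)\partial_x+r'(x)$ from \eqref{eq:app-polynomial-commutators}. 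This gives
\[
[p(x)\partial_x,q(x)\partial_x]=(pq'-p'q)(x)\,\partial_x .
\]
\end{itemize}
Therefore
\[
[\widehat\nabla_p,\widehat\nabla_q]=\nabla_{[X_p,X_q]}+(pq'-p'q)(x)\,u^{-1}.
\]

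Now multiply on the left by $u$. The left-hand side maps $\widehat\cH$ into $u\widehat\cH$. On the right-hand side, consider each term separately.
\begin{itemize}
\item[(a)] The operator $u\nabla_{[X_p,X_q]}$ preserves $\widehat\cH$, by the F-bundle pole condition, and reduces to $\mu([X_p,X_q])$.
\item[(b)] For the second term, \eqref{eq:app-polynomial-commutators} gives $u\,r(x)u^{-1}=r(x)-u\,r'(x)$, with $r:=pq'-p'q$. This preserves $\widehat\cH$ and reduces to $r(K^{\mathrm F})$, since $x\bmod u=K^{\mathrm F}$.
\end{itemize}
Reducing modulo $u$ therefore yields $\mu([X_p,X_q])+(pq'-p'q)(K^{\mathrm F})=0$, which is \eqref{eq:key}.

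Finally, \eqref{eq:involution} follows from \eqref{eq:key}. Indeed, $\mu(X_{p'q-pq'})=(p'q-pq')(K^{\mathrm F})$ by \eqref{eq:F-Euler-powers}, and $\mu$ is injective by maximality. The only delicate point is the sign and ordering bookkeeping in the Weyl-algebra commutator and in the conjugation $u\,r(x)u^{-1}$. Both are controlled by \eqref{eq:app-polynomial-commutators}, so I expect no genuine obstacle beyond these careful sign checks.
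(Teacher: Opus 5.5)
Your argument is correct and uses the same ingredients as the paper's proof: flatness to produce $\nabla_{[X_p,X_q]}$, the Weyl relations \eqref{eq:app-polynomial-commutators}, lattice preservation of $\widehat\nabla_p$ from Lemma~\ref{lem:app-transport-operators}, reduction modulo $u$, and injectivity of $\mu$. The difference is only bookkeeping. The paper writes $u\nabla_{X_p}=p(x)+u(\widehat\nabla_p-p'(x))$ and expands $[u\nabla_{X_p},u\nabla_{X_q}]$ modulo $u^2\widehat\cH$, whereas you compute the exact identity $[\widehat\nabla_p,\widehat\nabla_q]=\nabla_{[X_p,X_q]}+(pq'-p'q)(x)\,\partial_x$; combined with \eqref{eq:involution}, this also shows $[\widehat\nabla_p,\widehat\nabla_q]=\widehat\nabla_{p'q-pq'}$.
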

 \begin{proof}
  Rearranging \eqref{eq:app-lattice-transport} gives
  \[
  u\nabla_{X_p}=p(x)+u\bigl(\widehat\nabla_p-p'(x)\bigr).
  \]
  All operators in parentheses preserve the $u$-adic filtration.
  Moreover, $[p(x),u]$ has image in $u^2\widehat\cH$ by
  \eqref{eq:app-polynomial-commutators}. Expanding the
  commutator and discarding terms with image in
  $u^2\widehat\cH$ therefore gives
  \begin{equation}\label{eq:app-bracket-reduction}
  \begin{aligned}
  [u\nabla_{X_p},u\nabla_{X_q}]
  &\equiv u\bigl([p(x),\widehat\nabla_q]
  +[\widehat\nabla_p,q(x)]\bigr)\\
  &=u\bigl(p'(x)q(x)-p(x)q'(x)\bigr)
  \pmod{u^2\widehat\cH}.
  \end{aligned}
  \end{equation}
  For the equality, use the first commutator in
  \eqref{eq:app-transport-commutators} and  Leibniz rule. On the other hand, flatness gives
  \[
  [u\nabla_{X_p},u\nabla_{X_q}]=u^2\nabla_{[X_p,X_q]}.
  \]
  Dividing \eqref{eq:app-bracket-reduction} by $u$ and reducing
  modulo $u$ proves \eqref{eq:key}. The injectivity of $\mu$ and Equation~\eqref{eq:F-Euler-powers} give \eqref{eq:involution}.
 \end{proof}
 
 \begin{theorem}[{\cite[Rem.~3.14]{KKPY}}]
  \label{cl:formal-persistence}
  Let $(\cH,\nabla)/B$ be a maximal F-bundle over a smooth,
  purely even base, and let $b\in B(\Bbbk)$. Use the notation
  \eqref{eq:app-polynomial-fields}, and put
  \begin{equation}\label{eq:app-euler-distribution}
   \mathcal D_w:=\operatorname{span}_{\Bbbk}
   \{X_p(w):p\in\Bbbk[z]\}
   =\operatorname{span}_{\Bbbk}
   \{(\Eu_w^{\mathrm F})^{\circ j}:j\geq0\}.
  \end{equation}
  Then the following statements hold.
  \begin{enumerate}
   \item There is a unique germ of a closed smooth analytic
   submanifold $W\subset B$ through $b$ such that
   $T_wW=\mathcal D_w$ for all sufficiently nearby rational
   points $w\in W(\Bbbk)$.
   \item The number of distinct eigenvalues of
   $\Eu^{\mathrm F}\circ(-)|_{T_W}$ and their algebraic
   multiplicities are locally constant on $W$.
   \item Suppose that $\widehat\cH_b^{\mathrm{mer}}$ is of
   unramified exponential type. After shrinking $W$, the
   distinct eigenvalues of $K^{\mathrm F}|_W$ admit analytic
   labels $\lambda_\alpha(w)$, and there are regular-singular
   connections $\cR_\alpha$ over $\Bbbk((u))$ such that
   \begin{equation}\label{eq:app-constant-factors}
    \widehat\cH_w^{\mathrm{mer}}
    \cong\bigoplus_\alpha
    \cE^{-\lambda_\alpha(w)/u}\otimes\cR_\alpha,
    \qquad w\in W(\Bbbk).
   \end{equation}
   Here, the $\cR_\alpha$ are the regular-singular factors at
   $b$ and do not depend on $w$, up to isomorphism.
  \end{enumerate}
 \end{theorem}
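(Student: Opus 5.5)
The plan has three parts: obtain $W$ from an involutivity argument, get constancy of multiplicities along $W$ from the Euler flow, and transport the formal connection along $W$ using the operators $\widehat\nabla_p$ of Lemma~\ref{lem:app-transport-operators}.

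\textbf{Part (i): the submanifold $W$.} By Lemma~\ref{lem:key}, the fields $X_p$ satisfy $[X_p,X_q]=X_{p'q-pq'}$. Their $\cO_B$-span is therefore closed under Lie bracket. At each point this span equals $\mathcal D_w$, whose dimension is the degree of the minimal polynomial of $K^{\mathrm F}_w$. This dimension is only lower semicontinuous, so the distribution could be singular.

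To handle this, let $k:=\dim\mathcal D_b$ and choose polynomials $p_1,\dots,p_k$ (say $1,z,\dots,z^{k-1}$) with $X_{p_i}(b)$ independent. Every $X_p$ is then a combination of these with meromorphic coefficients, by division by the minimal polynomial at $b$. I would rather work with the finite-dimensional Lie algebra spanned by $X_1,X_z,\dots$. Since $[X_{z^a},X_{z^b}]=(a-b)X_{z^{a+b-1}}$, the span $\mathfrak g$ of all $X_{z^j}$ is a Lie algebra of vector fields. Its evaluation at $b$ is $\mathcal D_b$, of dimension $k$.

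I then apply the non-Archimedean analytic Frobenius/Nagano theorem for a Lie algebra of analytic vector fields. Concretely, flow out from $b$ by the exponentials of $X_{p_1},\dots,X_{p_k}$; these converge on small polydisks in the Berkovich setting. This gives a $k$-dimensional analytic germ $W$ that is tangent to $\mathfrak g$ and to which $\mathfrak g$ is tangent. Uniqueness and the identity $T_wW=\mathcal D_w$ follow because the minimal polynomial degree cannot jump along $W$: its degree is constant along the orbit, since the flows act by automorphisms of the F-bundle structure restricted to $W$ (see part (ii)).

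\textbf{Part (ii): constancy of multiplicities.} Along $W$, Lemma~\ref{lem:app-transport-operators} gives $[\widehat\nabla_p,x]=-p(x)$. Reducing modulo $u$, and using that $\widehat\nabla_p$ preserves $\widehat\cH$, yields a Lax-type equation $X_p(K^{\mathrm F})=[\,\cdot\,,K^{\mathrm F}]-p(K^{\mathrm F})$. Thus the characteristic polynomial $\chi_w(z)$ evolves by $X_p\chi=-p\,\chi'$ in a suitable sense. Each eigenvalue $\lambda_\alpha$ then satisfies $X_p\lambda_\alpha=-p(\lambda_\alpha)$, a flow moving roots without collision, since the ODE $\dot\lambda=-p(\lambda)$ is uniquely solvable. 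Hence the number of distinct roots and their multiplicities are constant along $W$, and the $\lambda_\alpha$ are analytic on $W$.

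\textbf{Part (iii): the formal connection.} The commutation relations show that $\widehat\nabla_p$ is a derivation compatible with $x=\nabla_{u^2\partial_u}$ up to the reparametrization $x\mapsto x-p(x)\partial_x\cdot$. In terms of the $x$-Fourier (Laplace) picture, $\widehat\nabla_p$ is an infinitesimal change of coordinates $x\mapsto$ flow of $-p(x)\partial_x$ acting on $\widehat\cH^{\mathrm{mer}}$, preserving all lattices. Integrating along $W$, the fibers $\widehat\cH_w^{\mathrm{mer}}$, viewed as $\Bbbk[x,\partial_x]$-type modules, are identified after a coordinate change in $x$ that is a local diffeomorphism near each eigenvalue.

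Near $\lambda_\alpha$, this coordinate change sends $\lambda_\alpha(b)$ to $\lambda_\alpha(w)$. Its effect on the formal factor $\cE^{-\lambda/u}\otimes\cR_\alpha$ is to shift the exponential parameter and to leave the regular-singular part unchanged. The residue eigenvalues mod $\Z$ are conjugated, since the action preserves lattices, and in fact $\cR_\alpha$ is preserved up to isomorphism. Concretely, I would verify that $\widehat\nabla_p+p(\lambda_\alpha)u^{-1}$-twisted operators give a flat lattice-preserving connection on the bundle of regularized summands over $W$, so that the formal fibers are isomorphic by formal parallel transport.

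\textbf{Main obstacle.} The main obstacle is part (iii): making rigorous that $\widehat\nabla_p$, which is not $\Bbbk((u))$-linear (since $[\widehat\nabla_p,u]=-up'(x)$), nonetheless integrates to isomorphisms of formal meromorphic connections after twisting by $\cE^{\lambda_\alpha/u}$. I would first check this on each spectral block. There, $p'(x)\equiv p'(\lambda_\alpha)$ plus a topologically nilpotent part, so a rescaling $u\mapsto e^{tp'(\lambda_\alpha)}u$ corrects the non-linearity. Rescaling $u$ preserves residues, as noted in the proof of Theorem~\ref{thm:fano}, which is exactly why the exponents stay constant.
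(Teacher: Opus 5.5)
\textbf{Parts (i) and (ii).} These follow the paper's route in substance and can be repaired. The $\Bbbk$-span of the $X_{z^j}$ need not be finite-dimensional, because Cayley--Hamilton expresses $X_{z^n}$ through lower powers only with nonconstant coefficients. Instead of a non-Archimedean Nagano theorem, use the elementary fact the paper relies on. Along the flow of $X_p$, the pullbacks of the $\cO_B$-generators $X_1,\dots,X_{z^{n-1}}$ satisfy a finite linear ODE, by \eqref{eq:involution} and Cayley--Hamilton. Hence the flows preserve $\mathcal D$, and this is what makes your flow-out $W$ tangent to $\mathcal D$. In (ii), your Lax equation, obtained by reducing $[\widehat\nabla_p,x]=-p(x)$ modulo $u$, is a legitimate substitute for the paper's identity $\mathrm{Lie}_{X_p}L=-p(L)$. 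However, the multiplicities of $\Eu^{\mathrm F}\circ(-)$ on $T_W=\mathcal D_w\cong\Bbbk[z]/(m_w)$ are those of the \emph{minimal} polynomial. So you must also show that Jordan blocks are preserved, via $\varphi_s(\lambda+N)-\varphi_s(\lambda)=Ng_s(N)$ with $g_s(0)\neq0$. It is not enough that the roots of the characteristic polynomial do not collide.

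\textbf{Part (iii): the genuine gap.} The gap sits exactly at the obstacle you name, and your proposed fix does not close it. There are two missing pieces.
\begin{enumerate}
\item The transport must be constructed. Since $\widehat\nabla_p=\partial_s+A(s,u,\delta)$ contains $\delta=u\partial_u$, ``integrating along $W$'' is not a finite-dimensional ODE. The paper solves $(\partial_s+A)T_s=0$ by power series in $s$ on each truncation $\widehat\cH_b/u^N\widehat\cH_b$, using $\|\delta\|\le1$ in the Gauss norm and $|n+1|=1$ in $\Bbbk$. It then passes to the inverse limit. Integrating the commutators of Lemma~\ref{lem:app-transport-operators} gives $T_s^{-1}xT_s=\varphi_s(x_b)$ and $T_s^{-1}uT_s=u\varphi_s'(x_b)$.
\item The rescaling does not remove the non-linearity. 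Work on a spectral block with constant-residue basis $v$, and set $z=x-\lambda$ and $\psi_s(z)=\varphi_s(\lambda+z)-\varphi_s(\lambda)=a_szh_s(z)$. The transported scalar action is then $u_s=u\psi_s'(z)=a_su\bigl(h_s(z)+zh_s'(z)\bigr)$. A constant rescaling of $u$ removes only $a_s$. The factor $h_s(z)+zh_s'(z)$ is $\id$ plus order-raising terms, and being topologically nilpotent does not make it disappear. Adding a multiple of $u^{-1}$, as in your $\widehat\nabla_p+p(\lambda_\alpha)u^{-1}$, leaves the commutator $[\,\cdot\,,u]=-up'(x)$ unchanged. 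So it cannot restore $\Bbbk((u))$-linearity, and it destroys lattice preservation when $p(\lambda_\alpha)\neq0$.
\end{enumerate}
What has to be proved is that $\cL$, with its new $\Bbbk[[u_s]]$-structure, has a basis in which $x_s=\lambda_s+u_sR$ with the \emph{same} constant matrix $R$. The paper does this with the corrected basis $w=h_s(z)^{R-\id}v$ and the identity $z_s\partial_{x,s}w=(R-\id)w$.

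This step cannot be replaced by ``lattices are preserved''. That fact, together with the integrated relations, only shows that $T_s\cL$ is logarithmic with residue conjugate to $R$. This fixes the exponents but not the isomorphism class of $\cR_\alpha$ when $R$ is resonant. For instance, $\nabla_\delta e_1=0$, $\nabla_\delta e_2=e_2+ue_1$ has residue $\mathrm{diag}(0,1)$ on $\langle e_1,e_2\rangle$, yet it has nontrivial unipotent monodromy, unlike $\delta+\mathrm{diag}(0,1)$.
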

 \begin{proof}
  We first construct $W$ and prove the spectral assertion.
  We then construct parallel transport for
  $\widehat\nabla_p$ along a flow of $X_p$ and show that it
  preserves the regular-singular factors.
  
  \medskip
  \noindent\emph{Construction of the integral germ.}
  Let $n=\dim B$. By the Cayley--Hamilton theorem and
  \eqref{eq:F-Euler-powers}, the fields
  $X_1,X_z,\ldots,X_{z^{n-1}}$ generate the distribution
  \eqref{eq:app-euler-distribution} over $\cO_B$.
  Equation~\eqref{eq:involution} shows that it is involutive.
  Its rank need not be constant in a neighborhood in $B$, so
  we construct its integral germ using flows.
  
  The local analytic flows of the fields $X_p$ exist by
  \cite[Lem.~3.6]{HYZZ}. For each such flow,
  \eqref{eq:involution}, followed by the Cayley--Hamilton theorem,
  gives a finite linear differential system for the pullbacks
  of the generators. Thus, the flow preserves $\mathcal D$ and
  its rank. Choose $r=\dim\mathcal D_b$ of the generators
  whose values at $b$ are independent. Their successive flows
  define an analytic map from an $r$-dimensional polydisk to
  $B$, with injective differential at the origin. After
  shrinking, its image is a closed embedded smooth germ $W$.
  Flow invariance gives $\dim\mathcal D_w=r$ on this image,
  and the differential of the parametrization has image
  $\mathcal D_w$. Hence $T_wW=\mathcal D_w$.
  Uniqueness follows because any other integral germ through $b$ has dimension $r$ and is preserved by the same flows, thus agreeing as a germ. This proves (i).
  
  \medskip
  \noindent\emph{Spectral constancy on $T_W$.}
  Write $L:=\Eu^{\mathrm F}\circ(-)|_{T_W}$; it preserves
  $T_W$ because $LX_q=X_{zq}$. Equation~\eqref{eq:involution}
  gives
  \[
  (\mathrm{Lie}_{X_p}L)(X_q)
  =[X_p,X_{zq}]-L[X_p,X_q]=-X_{pq}.
  \]
  Since the fields $X_q$ span $T_W$, this is the identity
  \begin{equation}\label{eq:Lie-derivative-equality}
   \mathrm{Lie}_{X_p}L=-p(L).
  \end{equation}
  Let $\Phi_s$ be the local flow of $X_p$, and let
  $\varphi_s$ be the scalar analytic flow determined by
  \begin{equation}\label{eq:app-scalar-flow}
   \partial_s\varphi_s(z)=-p(\varphi_s(z)),\qquad
   \varphi_0(z)=z.
  \end{equation}
  Thus \eqref{eq:Lie-derivative-equality} gives
  $\Phi_s^*L=\varphi_s(L)$. For small $s$, the function
  $\varphi_s$ is injective on the finite set $\spec(L_b)$
  and $\varphi_s'(\lambda)\ne0$ at each of its elements.
  On a generalized $\lambda$-eigenspace, write
  $L_b=\lambda\id+N$, where $N$ is nilpotent. Taylor expansion
  gives
  \begin{equation}\label{eq:app-jordan-transport}
   \varphi_s(\lambda\id+N)-\varphi_s(\lambda)\id
   =N g_s(N),\qquad g_s(0)=\varphi_s'(\lambda)\ne0.
  \end{equation}
  The operator $g_s(N)$ is invertible and commutes with $N$.
  Consequently, \eqref{eq:app-jordan-transport} has the same
  kernels of powers as $N$. The eigenvalue multiplicities, and
  indeed the Jordan block sizes, are therefore unchanged along
  each generating flow. Composing these flows proves (ii).
  
  \medskip
  \noindent\emph{Parallel transport on the completed lattices.}
  Fix a polynomial $p$ and a flow curve $b_s=\Phi_s(b)$.
  We use the operators $x$, $\partial_x$, and
  $\widehat\nabla_p$ of \eqref{eq:app-transport-operators}.
  In an analytic trivialization along this curve, write
  \begin{equation}\label{eq:app-analytic-operators}
   x=u^2\partial_u+C(s,u),\qquad
   \nabla_{X_p}=\partial_s+u^{-1}B(s,u),\qquad
   B(s,0)=p(C(s,0)),
  \end{equation}
  where $B$ and $C$ are analytic matrices. Put
  $\delta:=u\partial_u$, so that $x=u\delta+C(s,u)$.
  When $p(u\delta+C(s,u))$ is written with all powers of
  $\delta$ on the right, its constant term modulo $u$ is
  $p(C(s,0))$, and all its positive-degree coefficients are
  divisible by $u$. Equation~\eqref{eq:app-lattice-transport}
  therefore takes the form
  \begin{equation}\label{eq:app-bounded-transport}
   \widehat\nabla_p=\partial_s+A(s,u,\delta),
  \end{equation}
  with $A$ polynomial in $\delta$ and analytic in $(s,u)$.
  
  On a sufficiently small closed $u$-disk, $\delta$ is bounded
  in the Gauss norm, with $\|\delta\|\leq1$. Thus $A$ defines
  an analytic family of bounded operators. Write
  $A(s)=\sum_{j\geq0}A_js^j$, and choose $M\geq1$ such that
  $\|A_j\|\leq M^{j+1}$. Parallel transport is obtained by
  solving
  \begin{equation}\label{eq:app-transport-recursion}
  \begin{gathered}
  (\partial_s+A(s))T_s=0,\qquad T_0=\id,\\
  T_s=\sum_{n\geq0}T_ns^n,\qquad
  (n+1)T_{n+1}=-\sum_{j=0}^n A_jT_{n-j}.
  \end{gathered}
  \end{equation}
  The absolute value on $\Bbbk$ is trivial on $\Q$, so
  $|n+1|=1$. The recursion gives $\|T_n\|\leq M^n$;
  hence $T_s$ converges in operator norm for $|s|<M^{-1}$.
  The equation $\partial_sS_s=S_sA(s)$, $S_0=\id$, gives
  the same bound. Differentiating $S_sT_s$ and $T_sS_s$
  shows that $S_s=T_s^{-1}$.
  
  To act on arbitrary formal series, perform the same
  construction on $\widehat\cH_b/u^N\widehat\cH_b$.
  The bounds are independent of $N$: the operators preserve
  the $u$-adic filtration, and the quotient Gauss norms satisfy
  the same estimates. Taking inverse limits therefore defines
  mutually inverse maps on the entire completed lattices,
  including formal series that converge on no $u$-disk.
  For a shifted lattice $u^a\widehat\cH_b$, conjugation by
  $u^a$ replaces $\delta$ in
  \eqref{eq:app-bounded-transport} by $\delta+a$.
  Since $|a|\leq1$, the bounds remain uniform in $a\in\Z$.
  Uniqueness in \eqref{eq:app-transport-recursion} makes these
  constructions compatible. We obtain $\Bbbk$-linear
  isomorphisms
  \begin{equation}\label{eq:app-transport-maps}
   T_s\colon\widehat\cH_b^{\mathrm{mer}}
   \longrightarrow\widehat\cH_{b_s}^{\mathrm{mer}},\qquad
   T_s(u^a\widehat\cH_b)=u^a\widehat\cH_{b_s}
   \quad(a\in\Z).
  \end{equation}
  These maps need not be $\Bbbk((u))$-linear.
  
  \medskip
  \noindent\emph{Evolution of the operators and spectral summands.}
  For an operator $O$ along the flow, differentiating its
  conjugate gives
  \[
  \partial_s(T_s^{-1}O_{b_s}T_s)
  =T_s^{-1}[\widehat\nabla_p,O]_{b_s}T_s.
  \]
  Apply this identity to $x$, $\partial_x$, and $u$.
  The commutators \eqref{eq:app-transport-commutators} and
  the initial conditions at $s=0$ yield
  \begin{equation}\label{eq:app-integrated-transport}
  \begin{aligned}
  x_s&:=T_s^{-1}x_{b_s}T_s=\varphi_s(x_b),\\
  \partial_{x,s}&:=T_s^{-1}\partial_{x,b_s}T_s
  =\varphi_s'(x_b)^{-1}\partial_{x,b},\\
  u_s&:=T_s^{-1}uT_s=u\varphi_s'(x_b).
  \end{aligned}
  \end{equation}
  Here $\varphi_s$ is the scalar flow in
  \eqref{eq:app-scalar-flow}. Its functions of $x_b$ are
  defined by convergent power series in $s$ on each truncated
  lattice, followed by passage to the inverse limit.
  Notice that $u_s$ denotes the transported multiplication
  operator, rather than a new scalar coordinate on the original
  fiber. 
  
  Reduction of the first identity modulo $u$ conjugates
  $K_{b_s}^{\mathrm F}$ to $\varphi_s(K_b^{\mathrm F})$.
  Thus the distinct eigenvalues of $K^{\mathrm F}$ evolve as
  $\lambda_s=\varphi_s(\lambda)$, with their multiplicities
  unchanged. Along the successive flows parametrizing $W$,
  this gives analytic labels $\lambda_\alpha(w)$.
  
  By Lemma~\ref{lem:splitting}, write
  $\widehat\cH_b=\bigoplus_\lambda\cL_\lambda$ for the
  $x_b$-stable spectral decomposition. Each $\cL_\lambda$
  is preserved by $\varphi_s(x_b)$ and
  $\varphi_s'(x_b)^{\pm1}$ for small $s$. Consequently,
  \eqref{eq:app-integrated-transport} gives
  \begin{equation}\label{eq:app-transported-lattice}
   u_s\cL_\lambda=u\cL_\lambda,\qquad
   u\,T_s(\cL_\lambda)=T_s(u\cL_\lambda).
  \end{equation}
  The image $T_s(\cL_\lambda)$ is therefore a complete
  $\Bbbk[[u]]$-submodule, stable under $x_{b_s}$.
  Lifting a basis modulo $u$ gives a finite free basis of this
  image; its reduction is the generalized
  $\varphi_s(\lambda)$-eigenspace of $K_{b_s}^{\mathrm F}$.
  These images form a direct sum. The uniqueness in
  Lemma~\ref{lem:splitting} identifies them with the spectral
  lattices at $b_s$. Localizing
  \eqref{eq:app-transported-lattice} also identifies the
  corresponding meromorphic summands under $T_s$.
  
  \medskip
  \noindent\emph{Preservation of the regular-singular factors.}
  Assume now that $\widehat\cH_b^{\mathrm{mer}}$ is of
  unramified exponential type. Fix a spectral summand with
  eigenvalue $\lambda$. By Corollary~\ref{cor:leading-exponential},
  it is isomorphic to $\cE^{-\lambda/u}\otimes\cR$, with
  $\cR$ regular singular. Choose a constant-residue basis for
  $\cR$ as in \cite[Prop.~3.12 and Rem.~3.20(1)]{PutSinger}.
  Write $v$ for the tuple of induced basis elements of the
  exponential twist and $R$ for the constant residue matrix, so
  \begin{equation}\label{eq:app-initial-normal-form}
   \nabla_{\partial_u}v
   =\left(\frac{\lambda}{u^2}+\frac Ru\right)v,
   \qquad x_bv=(\lambda+uR)v.
  \end{equation}
  On this summand abbreviate $x_b$ and $\partial_{x,b}$ to
  $x$ and $\partial_x$, and put $z:=x-\lambda$.
  Then $z=u(u\partial_u+R)$ in this basis. In particular,
  $z$ strictly raises the $u$-adic order on the
  constant-residue lattice $\cL:=\Bbbk[[u]]v$. Since
  $[\partial_x,z]=\id$, \eqref{eq:app-initial-normal-form}
  implies
  \begin{equation}\label{eq:app-centered-residue}
   z\partial_xv=(\partial_xz-\id)v=(R-\id)v.
  \end{equation}
  
  Set $\lambda_s:=\varphi_s(\lambda)$ and factor the
  centered scalar flow as
  \begin{equation}\label{eq:app-centered-flow}
   \psi_s(z):=\varphi_s(\lambda+z)-\varphi_s(\lambda)
   =a_s z h_s(z),\qquad
   a_s:=\varphi_s'(\lambda)\ne0,\qquad h_s(0)=1.
  \end{equation}
  In the notation of \eqref{eq:app-integrated-transport},
  \begin{equation}\label{eq:app-centered-operators}
   z_s:=x_s-\lambda_s=\psi_s(z),\qquad
   \partial_{x,s}=\psi_s'(z)^{-1}\partial_x,\qquad
   u_s=u\psi_s'(z).
  \end{equation}
  We will find a basis for the scalar action defined by $u_s$
  in which $x_s$ has the same residue matrix $R$.
  
  Because $R$ commutes with $z$ and $z$ strictly raises
  the filtration on $\cL$, the following formal operator and
  basis tuple are well-defined:
  \begin{equation}\label{eq:app-corrected-basis}
   H_s(z):=h_s(z)^{R-\id}
   =\exp\bigl((R-\id)\log h_s(z)\bigr),\qquad
   w:=H_s(z)v.
  \end{equation}
  The commutator $[\partial_x,z]=\id$ gives
  $[\partial_x,H_s(z)]=H_s'(z)$, with
  \[
  H_s'(z)=(R-\id)H_s(z)\frac{h_s'(z)}{h_s(z)}.
  \]
  Combining this with \eqref{eq:app-centered-residue} and
  \eqref{eq:app-centered-flow}, we obtain
  \begin{equation}\label{eq:app-residue-calculation}
  \begin{aligned}
  z_s\partial_{x,s}w
  &=\left(\id+z\frac{h_s'(z)}{h_s(z)}\right)^{-1}
    z\partial_x\bigl(H_s(z)v\bigr)\\
  &=\left(\id+z\frac{h_s'(z)}{h_s(z)}\right)^{-1}
    \bigl(zH_s'(z)v+H_s(z)(R-\id)v\bigr)\\
  &=(R-\id)w.
  \end{aligned}
  \end{equation}
  Also $[\partial_{x,s},z_s]=\id$ and
  $u_s\partial_{x,s}=\id$, by
  \eqref{eq:app-centered-operators}. Hence
  \eqref{eq:app-residue-calculation} gives
  \begin{equation}\label{eq:app-transported-normal-form}
   z_sw=u_s(\partial_{x,s}z_s)w=u_sRw,
   \qquad x_sw=(\lambda_s+u_sR)w.
  \end{equation}
  
  It remains to check that this operator calculation produces
  an actual $\Bbbk((u))$-basis at $b_s$. By
  \eqref{eq:app-corrected-basis}, $w\equiv v\pmod{u\cL}$,
  while \eqref{eq:app-centered-operators} gives, for $k\geq0$,
  \[
  u_s(u^k\cL)=u^{k+1}\cL,\qquad
  u_s=a_su\pmod{\text{operators raising the $u$-adic order by two}}.
  \]
  Successive reduction modulo powers of $u$ therefore expresses
  each element of $\cL$ uniquely as a formal series in $u_s$
  with coefficients in the $\Bbbk$-span of the entries of $w$.
  The lattice $\cL$ is commensurable with the original spectral
  lattice, and \eqref{eq:app-transport-maps} ensures that $T_s$
  is continuous for their $u$-adic topologies. Since
  $T_su_s=uT_s$, the tuple $T_sw$ is a
  $\Bbbk[[u]]$-basis of $T_s(\cL)$ and a
  $\Bbbk((u))$-basis of the transported summand.
  Applying $T_s$ to \eqref{eq:app-transported-normal-form}
  and dividing by $u^2$ gives
  \begin{equation}\label{eq:app-final-normal-form}
   \nabla_{\partial_u}(T_sw)
   =\left(\frac{\lambda_s}{u^2}+\frac Ru\right)(T_sw).
  \end{equation}
  Thus \eqref{eq:app-final-normal-form} identifies the summand
  at $b_s$ with $\cE^{-\lambda_s/u}\otimes\cR$: its
  regular-singular factor has the same constant residue matrix
  as in \eqref{eq:app-initial-normal-form}.
  
  Apply this argument to every initial spectral summand and
  then to the successive generating flows parametrizing $W$.
  After shrinking their parameter polydisk, these identifications
  hold throughout $W$ and give \eqref{eq:app-constant-factors}.
  This proves (iii).
 \end{proof}
 
 \begin{remark}\label{rem:compatible-persistence}
  The formal-persistence conclusion also holds for an additional
  F-bundle $(\widetilde\cH,\widetilde\nabla)$ on the same leaf
  $W$, not necessarily maximal, provided that its action and
  residual endomorphism satisfies
  \begin{equation}\label{eq:app-compatible-action}
   \widetilde\mu(X_p)=p(\widetilde K^{\mathrm F}),
   \qquad p\in\Bbbk[z],\qquad
   X_p=p_\circ(\Eu^{\mathrm F}).
  \end{equation}
  Here $\Eu^{\mathrm F}$ is the Euler field of the original
  maximal F-bundle, and $\widetilde\mu$ and
  $\widetilde K^{\mathrm F}$ are defined by
  \eqref{eq:F-residual-action} for the additional bundle.
  If $\widehat{\widetilde\cH}_b^{\mathrm{mer}}$ is of
  unramified exponential type, its regular-singular factors are
  locally constant up to isomorphism along $W$, with eigenvalues
  matched by the flows above.
  
  Indeed, maximality was used to construct the tangent algebra
  and the leaf. Condition~\eqref{eq:app-compatible-action}
  allows us to apply Lemma~\ref{lem:app-transport-operators}
  to the additional bundle. The parallel transport and
  constant-residue calculation then apply without change.
  If the additional bundle is a super vector bundle, all these
  operators are even, and the argument applies to each parity
  separately.
 \end{remark}

 \printbibliography
 
 \medskip
\end{document}